\documentclass[10pt,reqno]{amsart}

\usepackage[utf8]{inputenc}
\usepackage[english]{babel}
\usepackage{amsmath,amsfonts,amssymb,amsthm,shuffle}
\usepackage[T1]{fontenc}
\usepackage{lmodern}
\usepackage{mathtools}
\usepackage[titletoc]{appendix}

\usepackage[top=3.5cm,bottom=3.5cm,left=3.6cm,right=3.6cm]{geometry}

\usepackage[dvipsnames]{xcolor}
\usepackage[hyperindex=true,frenchlinks=true,colorlinks=true,
citecolor=Mahogany,linkcolor=DarkOrchid,urlcolor=Tan,linktocpage,
pagebackref=true]{hyperref}

\usepackage{tikz}
\usetikzlibrary{shapes}
\usetikzlibrary{fit}
\usetikzlibrary{decorations.pathmorphing}

\usepackage{dsfont}
\usepackage{wasysym}
\usepackage{stmaryrd}
\usepackage{cite}
\usepackage{subfigure}
\usepackage{multirow}
\usepackage{enumitem}
\usepackage{multicol}
\usepackage{amsaddr}

\linespread{1.15}

\title[]{Limit theorems for U-statistics of Bernoulli data}
\author{Davide Giraudo  }
 
\address{Davide.Giraudo@rub.de \\  Ruhr-Universität Bochum, Germany}
\keywords{Law of large numbers; law of the iterated logarithms; central 
limit theorem; U-statistics; functionals of i.i.d.}
\date{\today}

\numberwithin{equation}{section}
\setcounter{tocdepth}{2}
\renewcommand{\leq}{\leqslant}
\renewcommand{\geq}{\geqslant}

\newtheorem{Theorem}{Theorem}[section]
\newtheorem{Th\'eor\`eme}{Th\'eor\`eme}[section]
\newtheorem{Proposition}[Theorem]{Proposition}
\newtheorem{Lemma}[Theorem]{Lemma}

\newtheorem{D\'efinition}[Th\'eor\`eme]{D\'efinition}
\newtheorem{Corollary}[Theorem]{Corollary}

\theoremstyle{remark}
\newtheorem{Remark}[Theorem]{Remark}

\tikzstyle{Vertex}=[circle,draw=LimeGreen!80,fill=LimeGreen!8,
inner sep=1pt,minimum size=2mm,line width=1pt,font=\scriptsize]
\tikzstyle{Node}=[Vertex,draw=RoyalBlue!80,fill=RoyalBlue!8,inner sep=1.5pt]
\tikzstyle{Leaf}=[rectangle,draw=Black!70,fill=Black!16,
inner sep=0pt,minimum size=1mm,line width=1.25pt]
\tikzstyle{Edge}=[Maroon!80,cap=round,line width=1pt]
\tikzstyle{Mark1}=[draw=BrickRed!80,fill=BrickRed!8]
\tikzstyle{Mark2}=[draw=BurntOrange!80,fill=BurntOrange!8]
\tikzstyle{EdgeRew}=[->,RedOrange!80,cap=round,thick]


\newcommand{\Fca}{\mathcal{F}}

\newcommand \ens[1]{\left\{ #1\right\}}
\newcommand \R{\mathbb R}

\newcommand \PP{\mathbb P}
\newcommand{\el}{\mathbb L}
\newcommand{\E}[1]{\mathbb E\left[#1\right]}

\newcommand \Z{\mathbb Z}

\newcommand \abs[1]{\left|#1\right|}
\newcommand \eps{\varepsilon}

\newcommand{\f}{\mathcal F}

\newcommand{\pr}[1]{\left(#1\right)}
\newcommand{\norm}[1]{\left\lVert #1 \right\rVert}

\newcommand{\ent}[1]{\left\lfloor #1\right\rfloor}
\newcommand{\ind}{\operatorname{ind}}
\newcommand{\til}[1]{\widetilde{#1}}
\newcommand{\LL}[1]{\operatorname{LL}\pr{#1}}

\begin{document}


\begin{abstract}
In this paper, we consider U-statistics whose data is 
a strictly stationary sequence which can be expressed as a functional of an 
i.i.d. one. We establish a strong law of large numbers, a bounded law of the 
iterated logarithms and a central limit theorem under a dependence condition. The main 
ingredients for the proof are an approximation by U-statistics whose data is a functional 
of $\ell$ i.i.d. random variables and an analogue of the Hoeffding's decomposition 
for U-statistics of this type.
\end{abstract}
\maketitle

 \section{Introduction and main results}
 
 \subsection{Context}\label{subsec:context}
 
Let $\pr{X_j}_{j\geq 1}$ be a strictly stationary sequence, in the sense 
that the vectors $\pr{X_i}_{i=1}^n$ and  $\pr{X_{i+k}}_{i=1}^n$ have the 
same distribution for all $n$ and $k\geq 1$. The U-statistic of kernel 
$h\colon\R\times \R\to \R$ and data $\pr{X_j}_{j\geq 1}$ is defined as 
\begin{equation}
U_n:=\sum_{1\leq i<j\leq n}h\pr{X_i,X_j}, \quad n\geq 2.
\end{equation}

The study of the asymptotic behavior of the sequence $\pr{U_n}_{n\geq 2}$ 
properly normalized is a question of interest in probability theory and the 
applications. We will be interested in the following three limit theorems. 

\begin{enumerate}
\item Law of large numbers: let $1\leq p<2$; the following convergence holds 
\begin{equation}\label{eq:definition_LGN}
\frac 1{ n^{1+1/p}}\pr{ 
 U_n -\E{ U_n}
}\to 0 \mbox{ a.~s.};
\end{equation}
\item Bounded law of the iterated logarithms: the random variable 
\begin{equation}\label{eq:definition_LLI}
\sup_{n\geq 1}\frac 1{n^{3/2}\sqrt{ \LL{n} }}\abs{
 U_n -\E{ U_n }
 }
\end{equation}
is almost surely finite, where $L\colon \R_+\to\R_+$ is defined by 
$L\pr{x}=\max\ens{\ln x,1}$ and $\LL{x}:=L\circ L\pr{x}$.

\item Central limit theorem:  there exists a $\sigma>0$ such that 
\begin{equation}\label{eq:TLC_iid}
\frac 1{n^{3/2}}\pr{ 
 U_n -\E{ U_n}}\to \sigma N\mbox{ in distribution},
\end{equation}
where $N$ is a standard normal random variable.

\end{enumerate} 
 
Usually, the conditions for guaranting this kind of limit theorems 
are on the dependence of the sequence $\pr{X_j}_{j\geq 1}$ and 
also on the kernel $h$, for example by requiring some integrability conditions 
on $h\pr{X_1,X_2}$.
We will  first review a few results for the case where the data $\pr{X_j}_{j\geq 1}$ 
is i.i.d. and the kernel $h$ is symmetric.

\begin{enumerate}
\item If $1\leq p<2$ and $h\pr{X_1,X_2}\in\mathbb L^p$, then 
\eqref{eq:definition_LGN} holds \cite{MR1227625}.

\item If  $h\pr{X_1,X_2}\in\mathbb L^2$, then the 
random variable defined by \eqref{eq:definition_LLI} is almost surely finite. Moreover, 
for all $1<p<2$, according to Theorem~2.5 in \cite{MR1348376}, the 
following inequality holds:
\begin{equation}
\norm{\sup_{n\geq  1}\frac 1{\sqrt{nL\pr{L\pr{ n}}}}\abs{
 U_n\pr{h,f,\pr{\eps_i}_{i\in \Z}  }-\E{ U_n\pr{h,f,\pr{\eps_i}_{i\in \Z}  }}
 }}_p\leq C_p \norm{h\pr{X_1,X_2}}_2.
\end{equation}
 
\item The convergence \eqref{eq:TLC_iid} has been established in 
\cite{MR0026294}.
\end{enumerate}

The extension of these results to the case of stationary dependent data is 
a challenging problem. The law of large numbers has been established under 
a $\beta$-mixing assumption in \cite{MR1624866,MR2571765}. In the case $p=1$, the 
independence assumption can be replaced by a $4$-dependence assumption, that is, 
for all $4$-uple of distinct integers $\pr{i_1,i_2,i_3,i_4}$, the collection of random variables $\pr{X_{i_k}}_{k=1}^4$ is independent. If the sequence $\pr{X_j}_{j\geq 1}$
is identically distributed, then the law of large numbers hold (Theorem~1 in 
\cite{MR1650528}). The question of an equivalent of the ergodic theorem 
when the data is any strictly stationary sequence was considered in 
\cite{MR1363941}.
The question of the law of the iterated logarithms was also adressed in 
\cite{MR2731072} where the data is allowed to be $\alpha$-mixing or 
a functional of a $\beta$-mixing sequence.
A central limit theorem has been established under a $\beta$-mixing 
assumption in \cite{MR1353588} and under an $\alpha$-mixing 
condition in \cite{MR2557623}. The $\tau$-dependent case 
has been addressed in \cite{MR2922461}. The case of 
associated random variables has also been investigated in 
\cite{MR1911807}. Note that there are also results 
dealing with the asymptotic behavior of $U$-statistics of 
a semimartingale \cite{MR3262509} and of Poisson processes 
\cite{MR3161465}.
 
In this paper, we will be interested in establishing the law of large number, 
the law of the iterated logarithms and the central limit theorem when the 
data can be expressed as a functional of an i.i.d. sequence. In a similar 
context on the data but for weighted U-statistics, the central
 limit theorem was investigated in \cite{MR2060311}.

Let us precise the context. Let $\pr{\eps_i}_{i\in \Z}$ 
 be an independent identically distributed sequence with values in $\R^k$, $k\geq 1$. 
 Given  measurable functions $h\colon \R^k\times \R^k\to \R$ and 
 $f\colon \pr{\R^k}^{\Z}  \to \R$, we are interested in the 
 asymptotic behavior of the U-statistic of order two defined by 
 \begin{equation}
 U_n\pr{h,f,\pr{\eps_i}_{i\in \Z}  }=
 \sum_{1\leq i<j\leq n}h\pr{ 
 f\pr{\pr{\eps_{i-k}}_{k\in \Z}   },  f\pr{\pr{\eps_{j-k}}_{k\in \Z}   }
 },
 \end{equation}
that is,  letting $X_j:=  f\pr{\pr{\eps_{j-k}}_{k\in \Z}}$, $U_n$ is a U-statistic 
of kernel $h$ and the data is the strictly stationary sequence 
$\pr{X_j}_{j\geqslant 1}$. More precisely, we are interested in 
conditions involving the kernel $h$ and the sequence $
\pr{f\pr{\pr{\eps_{j-k}}_{k\in \Z}}}_{j\geq 1}$
which guarantee the previously mentioned limit theorems.

In all the paper, the kernel $h$ is supposed to be symmetric in the sense that 
$h\pr{x,y}=h\pr{y,x}$ for all $x,y\in\R^k$. 
 
The paper is organized as follows. In Subsection~\ref{subsec:measure_of_dependence}, 
we will introduce a measure of dependence of a U-statistic whose data 
is a functional of an i.i.d. sequence. In Subsection~\ref{subsec:generalized_Hoeffding}, 
we formulate an analogue of the Hoeffding decomposition for such U-statistics. 
Subsections~\ref{subsec:LLN},~\ref{subsec:LLI} and~\ref{subsec:TLC}
are devoted respectively to the statements of the law of large numbers, 
the bounded law of the iterated logarithms and the  central limit theorems 
for U-statistics of Bernoulli data. In Subsection~\ref{subsec:applications}, 
we give examples of kernels $h$ for which the measure of dependence 
can be estimated only with the help of the dependence of the data.
Section~\ref{sec:proofs} is devoted to the proofs of the previously mentioned 
results.

 \subsection{Measure of dependence}\label{subsec:measure_of_dependence}
 
 The  extension of the results of the i.i.d. case requires 
 a measure of dependence. Let 
 $\pr{\eps_u}_{u\in\Z}$ be an i.i.d. sequence, $f\colon \R^\Z\to \R$ a 
 measurable function and $h\colon\R\times\R\to\R$. In order to deal 
 with the dependence which comes into play in $U_n\pr{h,f,
 \pr{\eps_i}_{i\in\Z}}$, we need the following notations. Denote 
 $X_j:=f\pr{\pr{\eps_{j-i}}_{i\in\Z}}$ and define the random vectors 
 \begin{equation}
  V_{j,\ell}:=\pr{\eps_u}_{u=j-\ell}^{j+\ell}.
 \end{equation}
  The random variable 
  $\E{X_j\mid  V_{j,\ell}}$ can be writen as a function of 
  $V_{j,\ell}$ and by stationarity and 
  Lemma~\ref{lem:propriete_esperance_cond_fonction_d_iid}, the 
  involved function does not depend on $j$. Therefore, we write 
  \begin{equation}
   \E{X_j\mid  V_{j,\ell}}=f_\ell\pr{V_{j,\ell}}.
  \end{equation}
  We then define for $p\geq 1$ and $\ell\geq 1$ the 
  coefficient of dependence 
  \begin{equation}\label{eq:definition_de_la_mesure_de_dependence}
   \theta_{\ell,p}:= 
   \sup_{j\geq 0}\norm{
   h\pr{f_\ell\pr{V_{0,\ell}},f_\ell\pr{V_{j,\ell}}  }
   -h\pr{f_{\ell-1}\pr{V_{0,\ell-1}},f_{\ell-1}\pr{V_{j,\ell-1}}  }
   }_p
  \end{equation}
  and for $\ell=0$, 
  $\theta_{0,p}= \sup_{j\geq 0}\norm{h\pr{f_0\pr{V_{j,\ell}},
  f_0\pr{V_{0,\ell}}}}_p$.
  
  In particular, finiteness of $\sum_{\ell\geq 1}\theta_{\ell,p}$
  allows to write 
  \begin{multline}\label{eq:decompositio_avec_f_l}
   U_n\pr{h,f,
 \pr{\eps_i}_{i\in\Z}}=\sum_{1\leq i<j\leq n}
 h\pr{f_0\pr{V_{i,0}},f_0\pr{V_{j,0}}}\\
 +
 \sum_{\ell\geq 1}\sum_{1\leq i<j\leq n}
  h\pr{f_\ell\pr{V_{i,\ell}},f_\ell\pr{V_{j,\ell}}  }
   -h\pr{f_{\ell-1}\pr{V_{i,\ell-1}},f_{\ell-1}\pr{V_{j,\ell-1}}  },
  \end{multline}
  where the convergence takes place in $\mathbb L^p$ and almost 
  surely. The interest of the decomposition \eqref{eq:decompositio_avec_f_l}
  is that it reduces the treatmeant of the original U-statistic to that of 
  U-statistics whose data is a strictly stationnary sequence 
  which is a functional of $2\ell+1$ independent identically distributed 
  random variables. Nevertheless, this task requires some work 
  in order to be reduced to U-statistics of independent data.

 \subsection{A generalized Hoeffding's decomposition}
\label{subsec:generalized_Hoeffding} 
 
 A usefull tool to establish limit theorems for U-statistics with i.i.d. data
 is the Hoeffdings's decomposition \cite{MR0026294}. 
Let $h\colon\R^k\times \R^k\to \R$ be a symmetric 
measurable function and $\pr{X_j}_{j\in 
\Z}$ be an i.i.d. sequence. 
 We write decompose $h$ in the following way:
 \begin{equation}
 h\pr{x,y}=\theta+h_1\pr{x}+h_1\pr{y}+h_2\pr{x,y},
 \end{equation}
 where $\theta=\E{h\pr{X_1,X_2}}$, 
 \begin{equation}
 h_1\pr{x}=\E{h\pr{X_1,x}}-\theta\mbox{ and }
 \end{equation}
 \begin{equation}
 h_2\pr{x,y}=h\pr{x,y}-h_1\pr{x}-h_1\pr{y}-\theta.
 \end{equation}
 In this way, the following equality holds
 \begin{equation}\label{eq:Hoeffding_iid}
 \sum_{1\leq i<j\leq n}h\pr{X_i,X_j}= 
 \binom n2\theta+n\sum_{i=1}^nh_1\pr{X_i}+
 \sum_{1\leq i<j\leq n}h_2\pr{X_i,X_j}.
 \end{equation}
 The part involving $h_2$ can be treated by martingale 
 techniques, since the sequence $\pr{\sum_{i=1}^{j-1}
 h_2\pr{X_i,X_j}}_{j\geq 1}$ is a martingale differences 
 sequence with respect to the filtration $\pr{\sigma\pr{X_u,u\leq j}}$ 
 and the terms $\sum_{i=1}^{j-1}
 h_2\pr{X_i,X_j}$ can be treated thanks to a reverse martingale differences 
 property.
 
We would like to extend this to the setting mentioned in 
\ref{subsec:context}, that is, $X_j:=  f\pr{\pr{\eps_{j-k}}_{k\in \Z}}$, where 
$\pr{\eps_u}_{u\in \Z}$ is an i.i.d. sequence. We introduce the following 
notation 
\begin{equation}
U_n^{\ind}\pr{h,\pr{\eps_i  }_{i\in \Z}}
=\sum_{1\leq i<j\leq n}h\pr{\eps_i ,\eps_j   },
\end{equation}
where $\pr{\eps_i  }_{i\in \Z}$ is an i.i.d. sequence of random variables 
with values in $\R^k$ and $h\colon\R^k\times \R^k\to \R$ is a measurable 
function.

One naturally expects the decomposition to be more complicated than
in the independent case. Let us point out the major differences and common 
points. Like in the independent case, the decomposition of the
 centered U-statistic involves a stationary sequence and a degenerated U-statistic. 
 But in the context of Bernoulli data, one get a series involving stationary 
 sequences and a series of degenerated U-statistics. An other difference 
 is that we also have remainder terms which are not directly associated 
 to the involved stationary sequences or degenerated U-statistic. 
 The origin of these terms will be explained during the proof.

\begin{Proposition}\label{prop:Hoeffding_decomposition}
Let $\pr{\eps_u}_{u\in \Z}$ be an i.i.d. sequence of random variables, 
$\pr{\eps'_u}_{u\in \Z}$ an independent copy of $\pr{\eps_u}_{u\in \Z}$ and 
$f\colon \R^{\Z}\to \R$ be a measurable function.
Let $V_{k,\ell}:=\pr{\eps_{u}}_{u=k-\ell}^{k+\ell}$ and 
$V'_{k,\ell}:=\pr{\eps'_{u}}_{u=k-\ell}^{k+\ell}$.
 Let $f_\ell\colon 
\R^{2\ell+1}\to \R$ be a function such that 
\begin{equation}
f_\ell\pr{V_{k,\ell}  }= 
\E{ f\pr{ \pr{\eps_{k-u}   }_{u\in \Z}      }   \mid 
V_{k,\ell} }\mbox{ a.s.}
\end{equation}
Let $h\colon\R\times \R\to \R$ be a symmetric measurable function. 
Assume that the following convergence holds almost surely for all 
$1\leq i<j$:
\begin{equation}\label{eq:convergence_presque_sure_pour_decomposition}
\lim_{\ell\to +\infty}  
h\pr{  f_\ell\pr{V_{i,\ell}  },f_\ell\pr{V_{j,\ell}  }    }
= h\pr{f\pr{  \pr{\eps_{i-u}}_{u\in\Z}    },
f\pr{  \pr{\eps_{j-u}}_{u\in\Z}     }
}.
\end{equation}
Then the following equality holds:
\begin{multline}
U_n\pr{h,f,\pr{\eps_i}_{i\in\Z}}-\E{U_n\pr{h,f,\pr{\eps_i}_{i\in\Z}}}=n\sum_{k=1}^n\E{h\pr{f_0\pr{V_{k,0}},
f_0\pr{V'_{k,0}}  }\mid V_{k,0}}+ 
U^{\ind}_n\pr{h^{\pr{0}},\pr{\eps_i}_i}+
 \\
 \sum_{\ell \geq1} 
\pr{4\ell+1} \ent{\frac{n}{4\ell+1}}
 \sum_{k=1}^{\pr{4\ell+2} \ent{\frac{n}{4\ell+2}}+1 }
\E{h\pr{f_\ell\pr{V_{k,\ell}},f_{\ell}\pr{  V'_{0,\ell}  }} \mid V_{k,\ell}  }
-\E{h\pr{f_{\ell-1}\pr{V_{k,\ell-1}},f_{\ell-1}\pr{  V'_{0,\ell-1}  }} \mid V_{k,\ell-1}  }
 \\
+\sum_{\ell\geq 1}\sum_{a,b\in [4\ell+2]} U_{\ent{\frac{n}{2\ell}}   }^{\ind}\pr{h_{a,b}^{\pr{\ell}}    ,
\pr{\eps_i^{a,b}}   }+R_{n,1,1}+R_{n,1,2}+
\sum_{k=2}^6R_{n,k},
\end{multline}
where for each $\ell\geq 1$ and all  $a,b\in [4\ell+2]$, 
the U-statistic  $U_{\ent{\frac{n}{2\ell}}   }^{\ind}\pr{h_{a,b}^{\pr{\ell}}    
,\pr{\eps_i^{a,b}}   }$ has independent data and 
is degenerated, and the remainder terms are defined as 
\begin{equation}
R_{n,1,1}:= \sum_{\ell\geq 1}\sum_{j=\pr{4\ell+2}\ent{\frac{n}{4\ell+2}   }+1     }^n 
\sum_{i=\pr{4\ell+2}\ent{\frac{j-1}{4\ell+2}   }+1 }^{j-1}H^{\pr{\ell}}_{i,j}
\end{equation}
\begin{equation}
R_{n,1,2}:= \sum_{\ell\geq 1}
\sum_{a=1}^{4\ell+2}\sum_{j=\pr{4\ell+2}\ent{\frac{n}{4\ell+2}   }+1     }^n 
\sum_{k=0}^{\ent{\frac{j-1}{4\ell+2}   }-1}H^{\pr{\ell}}_{\pr{4\ell+2}k+a,j};
\end{equation}
\begin{equation}
R_{n,2}:=\sum_{\ell\geq 1}\sum_{u=0}^{\ent{\frac{n}{4\ell+2}}-1}
\sum_{\substack{ a,b\in [ 4\ell+2]\\ a<b  }   }
H^{\pr{\ell}}_{u\pr{4\ell+2}+a,u\pr{4\ell+2}+b}
\end{equation}
 \begin{equation}
R_{n,3}:= \sum_{\ell\geq 1}\sum_{v=1}^{\ent{\frac{n}{4\ell+2}}-1}
\sum_{\substack{ a,b\in [ 4\ell+2]\\ 0\leq a-b\leq \pr{2\ell+1}-1   }   }
\pr{H^{\pr{\ell}}_{ a,v\pr{4\ell+2}+b}+H^{\pr{\ell}}_{ b,v\pr{4\ell+2}+a}}
\end{equation}
 \begin{equation}
R_{n,4}= \sum_{\ell\geq 1}\sum_{\substack{ a,b\in [ 4\ell+2]\\ \pr{2\ell+1}\leq a-b\leq  \pr{4\ell+2}-1   }   } \sum_{v=1}^{\ent{\frac{n}{4\ell+2}}-1} 
\pr{ H^{\pr{\ell}}_{ a,\pr{v+1}\pr{4\ell+2}+b}  
+H^{\pr{\ell}}_{ \pr{4\ell+2}+b,v\pr{4\ell+2}+a}}
\end{equation}
 \begin{equation}
R_{n,5}= \sum_{\ell\geq 1}\sum_{\substack{ a,b\in [ 4\ell+2]\\ \pr{2\ell+1}\leq a-b\leq  \pr{4\ell+2}-1   }   } \sum_{ u=0}^{ \ent{\frac{n}{4\ell+2}}-2} 
\pr{H^{\pr{\ell}}_{u\pr{4\ell+2}+a,\pr{u+1}\pr{4\ell+2}+b}-H^{\pr{\ell}}_{u\pr{4\ell+2}+a,2m\pr{2\ell+1}+b}}
\end{equation}
 \begin{equation}
R_{n,6}=  \sum_{\ell\geq 1}\sum_{\substack{ a,b\in [ 4\ell+2]
\\ \pr{2\ell+1}\leq a-b\leq  \pr{4\ell+2}-1   }   } 
\sum_{v=1}^{\ent{\frac{n}{4\ell+2}}-1} 
\pr{ H^{\pr{\ell}}_{ b,v\pr{4\ell+2}+a}- H^{\pr{\ell}}_{v\pr{4\ell+2}+b,v\pr{4\ell+2}+a}},
\end{equation} 
with 
\begin{multline}
H^{\pr{\ell}}_{i,j}:= 
h\pr{f_\ell\pr{V_{i,\ell}},f_\ell\pr{V_{j,\ell}}}
-\E{h\pr{f_\ell\pr{V_{i,\ell}},f_\ell\pr{V_{j,\ell}}}}\\-
\pr{h\pr{f_{\ell-1}\pr{V_{i,\ell-1}},f_{\ell-1}\pr{V_{j,\ell-1}}}
-\E{h\pr{f_{\ell-1}\pr{V_{i,\ell-1}},f_{\ell-1}\pr{V_{j,\ell-1}}}}}.
\end{multline}

\end{Proposition}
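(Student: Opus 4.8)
The plan is to reduce everything to single--level sums by means of the centered telescoping decomposition \eqref{eq:decompositio_avec_f_l}. Subtracting the mean there, and using that \eqref{eq:convergence_presque_sure_pour_decomposition} secures the convergence of the series, one obtains
\begin{multline*}
U_n\pr{h,f,\pr{\eps_i}_{i\in\Z}}-\E{U_n\pr{h,f,\pr{\eps_i}_{i\in\Z}}}
=\sum_{1\leq i<j\leq n}\pr{h\pr{f_0\pr{V_{i,0}},f_0\pr{V_{j,0}}}-\E{h\pr{f_0\pr{V_{i,0}},f_0\pr{V_{j,0}}}}}\\
+\sum_{\ell\geq1}\sum_{1\leq i<j\leq n}H^{\pr{\ell}}_{i,j},
\end{multline*}
so it is enough to decompose, for each fixed $\ell$, the single--level sum $S_\ell:=\sum_{1\leq i<j\leq n}H^{\pr{\ell}}_{i,j}$ (with the obvious convention at $\ell=0$) and then to reassemble the pieces according to their type. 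Each $H^{\pr{\ell}}_{i,j}$ is a centered difference of a term built on the level $\ell$ blocks $V_{i,\ell},V_{j,\ell}$ and a term built on the level $\ell-1$ blocks.

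For $\ell=0$ the data $f_0\pr{V_{i,0}}$ is a function of $\eps_i$ alone, so the $\ell=0$ sum is a genuine U--statistic of the i.i.d. sequence $\pr{\eps_i}$ with symmetric kernel $\pr{x,y}\mapsto h\pr{f_0\pr{x},f_0\pr{y}}$. I would apply the classical Hoeffding identity \eqref{eq:Hoeffding_iid}: the constant term is annihilated by the centering, the linear part is the projection $n\sum_{k=1}^n\E{h\pr{f_0\pr{V_{k,0}},f_0\pr{V'_{k,0}}}\mid V_{k,0}}$ (the independent copy $V'$ implementing the conditional expectation over the second argument), and the degenerate part is precisely $U^{\ind}_n\pr{h^{\pr{0}},\pr{\eps_i}}$.

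The level $\ell\geq1$ terms are the substantial ones, because the variables $f_\ell\pr{V_{i,\ell}}$ are no longer independent: the blocks $V_{i,\ell}=\pr{\eps_u}_{u=i-\ell}^{i+\ell}$ overlap exactly when $\abs{i-j}\leq2\ell$. To restore independence I would sort the indices of $\ens{1,\dots,n}$ by their residue $a\in[4\ell+2]$ modulo $4\ell+2=2\pr{2\ell+1}$; inside a single residue class consecutive indices differ by $4\ell+2>2\ell$, so the corresponding level $\ell$ and level $\ell-1$ blocks are disjoint and the data is i.i.d. Writing $i=\pr{4\ell+2}u+a$ and $j=\pr{4\ell+2}v+b$, the well--separated cross--residue pairs assemble, for each $\pr{a,b}\in[4\ell+2]^2$, into an ordinary U--statistic of an independent sequence $\pr{\eps_i^{a,b}}$ of length $\ent{n/\pr{2\ell}}$. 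Applying Hoeffding to each of these independent blocks, the degenerate parts yield the family $\sum_{a,b\in[4\ell+2]}U^{\ind}_{\ent{n/\pr{2\ell}}}\pr{h^{\pr{\ell}}_{a,b},\pr{\eps_i^{a,b}}}$, while the linear parts, once summed over residues and telescoped between levels $\ell$ and $\ell-1$, produce the projection series with multiplicity $\pr{4\ell+1}\ent{n/\pr{4\ell+1}}$ displayed in the statement. All pairs that fail to be well separated are routed into the remainders: $R_{n,2}$ for the same--period pairs, $R_{n,3}$ and $R_{n,4}$ for the neighbouring--period pairs whose blocks still overlap (split according to whether the residue gap $a-b$ lies in $\ens{0,\dots,2\ell}$ or in $\ens{2\ell+1,\dots,4\ell+1}$), $R_{n,1,1}$ and $R_{n,1,2}$ for the boundary pairs with $j$ beyond the last full period $\pr{4\ell+2}\ent{n/\pr{4\ell+2}}$, and $R_{n,5},R_{n,6}$ for the stationarization corrections needed to move the partner index of $\pr{\eps_i^{a,b}}$ to its canonical reference position (these are exactly the ``terms not directly associated to a stationary sequence or a degenerate U--statistic'' announced before the statement).

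The hard part will be purely combinatorial: showing that the generic U--statistics together with the six remainder families realise a genuine partition of $\ens{\pr{i,j}:1\leq i<j\leq n}$ in which every ordered pair is counted once with the correct sign, and that the multiplicities $\pr{4\ell+1}\ent{n/\pr{4\ell+1}}$ in the projection term and $\ent{n/\pr{2\ell}}$ in the degenerate term emerge with the right values. Once this partition and the attendant index arithmetic are checked, stationarity makes the reference--position terms identically distributed, Hoeffding applies block by block, and summing over $a$, $b$ and $\ell$ is routine.
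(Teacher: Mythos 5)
Your overall strategy coincides with the paper's: telescope over $\ell$ via \eqref{eq:decompositio_avec_f_l}, treat the level-$0$ term with the classical Hoeffding identity \eqref{eq:Hoeffding_iid}, and, for each $\ell\geq 1$, sort indices by their residue modulo $4\ell+2$, assemble the well-separated cross-residue pairs into U-statistics of independent block data, apply Hoeffding to each of those, and push everything else into remainders. The paper implements exactly this, but in two separate lemmas applied to the difference kernel $\widetilde h$ built from $f_\ell$ and $f_{\ell-1}$: Lemma~\ref{lem:Hoeffding_decomposition_ell_dependent} (block decomposition into independent-data U-statistics plus $R_{n,1},\dots,R_{n,6}$) and Lemma~\ref{lem:Hoeffding_dec_fct_ell_dep} (Hoeffding applied to each independent block), after which the terms are collected over $\ell$.

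The gap is that the proposition \emph{is} the bookkeeping identity with the explicitly defined $R_{n,k}$, and that verification is precisely what you defer as ``routine combinatorics''; moreover, the routing you do describe is inconsistent with the definitions of the $R_{n,k}$, so your plan executed as written would not terminate in the stated identity. Concretely: for residues with $0\leq a-b\leq 2\ell$ and $u<v$, the blocks $V_{u(4\ell+2)+a,\ell}$ and $V_{v(4\ell+2)+b,\ell}$ are at distance at least $(4\ell+2)-2\ell=2\ell+2$, hence \emph{never} overlap, so $R_{n,3}$ cannot be an overlap correction; its terms $H^{(\ell)}_{a,v(4\ell+2)+b}$, $H^{(\ell)}_{b,v(4\ell+2)+a}$ all have first index in the zeroth period, and it is the correction for the shift of the summation origin ($u$ starting at $0$) when matching the double sum to a U-statistic indexed from $1$. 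The genuinely overlapping pairs occur only in the far-residue case $2\ell+1\leq a-b\leq 4\ell+1$ with adjacent periods $v=u+1$ (the gap can be as small as $1$), and the paper neutralizes them not by dumping them into a remainder but by the add-and-subtract telescoping step that moves $v$ to $v+1$ (respectively $u$ to $u+1$); the by-products of that telescoping are exactly $R_{n,5}$ and $R_{n,6}$ --- note that $R_{n,5}$ contains the differences $H^{(\ell)}_{u(4\ell+2)+a,(u+1)(4\ell+2)+b}-H^{(\ell)}_{u(4\ell+2)+a,2m(2\ell+1)+b}$, i.e.\ precisely the overlapping pairs coupled with their shifted versions --- while $R_{n,4}$ is again an origin-shift correction. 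Your proposal assigns these roles the other way around ($R_{n,3},R_{n,4}$ for overlaps, $R_{n,5},R_{n,6}$ for ``stationarization''), which shows the partition check was not carried out; since that check is the entire content of the statement, you need to prove the analogue of Lemma~\ref{lem:Hoeffding_decomposition_ell_dependent} in full (including the telescoping that generates $R_{n,5}$, $R_{n,6}$), then apply Hoeffding block by block, and only then sum over $a$, $b$ and $\ell$.
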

 
 \subsection{Law of large numbers} \label{subsec:LLN}

We first present a result on the Marcinkievicz law of large numbers for 
U-statistics of i.i.d. data. In order to extend it to the context of 
functional of i.i.d., we need a control on a maximal function.
 
 \begin{Proposition}[Marcinkievicz-Zygmund law of large numbers]
 \label{prop:MZSSLNiid}
Let $\pr{\eps_i}_{i\in \Z}$ be an i.i.d. sequence with values in $\mathbb R^d$ and let 
$h\colon \R^d\times\R^d\to \R$ be a measurable function. For $1\leq p<2$, define
\begin{equation}
M_{p}:=\sup_{n\geq 1}\frac{1}{n^{2/p}}\abs{U_n^{\ind}\pr{h,\pr{\eps_i}_{i\in \Z}} }.
\end{equation}
Assume that $\E{\abs{h\pr{\eps_0,\eps_1}}^p}$ is finite and that 
for all $x\in\mathbb R^d$, $\E{h\pr{\eps_0,x}}=0$. Then the following
statements hold:
\begin{enumerate}
\item\label{itm:law_of_large_numbers_iid} the sequence $\pr{n^{-2/p}
\abs{U_n^{\ind}\pr{h,\pr{\eps_i}_{i\in \Z}} 
}  }_{n\geq 1}$ converges
 to zero almost surely ;
\item\label{itm:contrl_Mp_iid} for any positive $x$, 
\begin{equation}\label{eq:contrl_Mp_iid_cor}
x^p\PP\ens{M_p >x }\leq \kappa_p 
\E{\abs{h\pr{\eps_0,\eps_1}}^p},
\end{equation}
where $\kappa_p$ is bigger than $1$ and depends only on $p$.
\end{enumerate}
\end{Proposition}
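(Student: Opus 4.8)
The plan is to exploit two nested martingale structures hidden in $U_n^{\ind}\pr{h,\pr{\eps_i}_i}$ and then to extract the gain coming from $p<2$ by a Marcinkievicz--Zygmund truncation adapted to the normalization $n^{2/p}$. I would prove the maximal inequality \eqref{eq:contrl_Mp_iid_cor} first and deduce the almost sure convergence from it. Setting $\mathcal{F}_n=\sigma\pr{\eps_1,\dots,\eps_n}$ and $D_n=\sum_{i=1}^{n-1}h\pr{\eps_i,\eps_n}$, the centering hypothesis $\E{h\pr{\eps_0,x}}=0$ together with the symmetry of $h$ yields $\E{D_n\mid\mathcal{F}_{n-1}}=\sum_{i<n}\E{h\pr{\eps_i,\eps_n}\mid\eps_i}=0$, so $\pr{U_n^{\ind}}_{n\ge2}$ is an $\pr{\mathcal{F}_n}$-martingale; conditionally on $\eps_n$, moreover, $D_n$ is a sum of $n-1$ independent centered random variables. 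These two facts give a clean fixed-horizon bound: by the Burkholder--Davis--Gundy inequality, the subadditivity of $t\mapsto t^{p/2}$ (valid since $p<2$), and the scalar Marcinkievicz--Zygmund inequality applied conditionally on $\eps_n$, I expect $\E{\max_{n\le N}\abs{U_n^{\ind}}^p}\le C_pN^2\E{\abs{h\pr{\eps_0,\eps_1}}^p}$. This is the workhorse, but it is scale invariant across dyadic blocks and hence cannot by itself produce the weighted estimate \eqref{eq:contrl_Mp_iid_cor}.

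To bound $\PP\ens{M_p>x}$ I would truncate each kernel value $h\pr{\eps_i,\eps_j}$ at a level comparable to $x\,n^{2/p}$ and split $U_n^{\ind}$ accordingly. The contribution of the large values vanishes off the event that some pair among the first $N$ indices exceeds the threshold, whose probability is controlled by a union bound that sums, thanks to $\sum_j j\,\PP\ens{\abs{h}>x\,j^{2/p}}\le x^{-p}\E{\abs{h}^p}$, to a quantity of the right order $x^{-p}\E{\abs{h}^p}$. For the truncated bounded part, after re-centering to a canonical kernel so as to keep the martingale property, I would apply a Hájek--Rényi--Chow weighted maximal inequality with weights $n^{-2/p}$ and estimate $\sum_n n^{-4/p}\E{\pr{D_n^{\mathrm{trunc}}}^2}$ via $\E{h^2\mathbf{1}_{\abs{h}\le x n^{2/p}}}\le\pr{x n^{2/p}}^{2-p}\E{\abs{h}^p}$, where again $p<2$ is what makes the weighted series summable. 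I would also use $\PP\ens{M_p>x}\le1$ to dispose of the range $x^p\le\E{\abs{h}^p}$, so that the linear and constant re-centering terms, of size $x^{1-p}\E{\abs{h}^p}$, only have to be controlled for $x^p>\E{\abs{h}^p}$, where they are of order $x$.

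The genuine obstacle lives here: a large value of $h\pr{\eps_i,\eps_j}$ may be caused by either $\eps_i$ or $\eps_j$, so a single threshold indexed by $n$ sits exactly at the borderline and the naive sum $\sum_n n^{-4/p}\E{\pr{D_n^{\mathrm{trunc}}}^2}$ diverges logarithmically. One must therefore truncate according to both indices and absorb the resulting lower-order remainders, which is precisely the two-index bookkeeping that the elaborate decomposition of Proposition~\ref{prop:Hoeffding_decomposition} is designed to organize, and this is where the bulk of the work lies.

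Granting \eqref{eq:contrl_Mp_iid_cor}, the almost sure convergence follows cleanly. I would fix $t>0$ and write $h=\til{h}_t+r_t$, where $\til{h}_t$ is the canonical projection of $h\,\mathbf{1}_{\abs{h}\le t}$ and $r_t=h-\til{h}_t$ is again canonical with $\norm{r_t}_p\le C\norm{h\,\mathbf{1}_{\abs{h}>t}}_p\to0$. For the bounded kernel $\til{h}_t$, orthogonality of canonical terms gives $\E{U_N^{\ind}\pr{\til{h}_t}^2}=\binom N2\E{\til{h}_t\pr{\eps_0,\eps_1}^2}$, and Doob's $L^2$ inequality over dyadic blocks yields $\sum_m\E{\max_{2^m\le n<2^{m+1}}\pr{n^{-2/p}\abs{U_n^{\ind}\pr{\til{h}_t}}}^2}\le C\norm{\til{h}_t}_2^2\sum_m 2^{m\pr{2-4/p}}<\infty$, the series converging exactly because $p<2$; hence $n^{-2/p}U_n^{\ind}\pr{\til{h}_t}\to0$ almost surely. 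For the remainder, the maximal inequality gives $\PP\ens{\limsup_n n^{-2/p}\abs{U_n^{\ind}\pr{r_t}}>x}\le\kappa_p x^{-p}\norm{r_t}_p^p$, and letting $t\to\infty$ forces $\limsup_n n^{-2/p}\abs{U_n^{\ind}\pr{h}}=0$ almost surely. The whole difficulty thus remains inside the maximal inequality of the second and third paragraphs.
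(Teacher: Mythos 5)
Your starting point coincides with the paper's: the same martingale differences $D_n=\sum_{i=1}^{n-1}h\pr{\eps_i,\eps_n}$, the same observation that conditionally on the last variable each $D_n$ is a sum of independent centered terms, and the same realization that a fixed-horizon bound $\E{\max_{n\le N}\abs{U_n^{\ind}}^p}\le C_pN^2\E{\abs{h\pr{\eps_0,\eps_1}}^p}$ cannot by itself yield \eqref{eq:contrl_Mp_iid_cor}. Your deduction of the almost sure convergence \emph{from} \eqref{eq:contrl_Mp_iid_cor} is also sound (the paper proceeds differently, getting both items at once by applying its summed dyadic estimate \eqref{eq:equation_cle_LGN_cas_iid} to $h/\eps$ and $h/x$ and invoking Borel--Cantelli). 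But the proposal has a genuine gap exactly where you place the ``bulk of the work'': the maximal inequality \eqref{eq:contrl_Mp_iid_cor}, which is the entire content of the proposition, is never proved. Your kernel-level truncation at level $x\,n^{2/p}$ fails by your own computation, since $\sum_n n^{-4/p}\E{\pr{D_n^{\mathrm{trunc}}}^2}\asymp x^{2-p}\E{\abs{h}^p}\sum_n n^{-1}$ diverges; and the proposed repair --- a two-index truncation ``organized by'' Proposition~\ref{prop:Hoeffding_decomposition} --- is both left unexecuted and misattributed: Proposition~\ref{prop:Hoeffding_decomposition} is a decomposition for U-statistics of \emph{Bernoulli} data, it contains no truncation device, and it plays no role in the paper's proof of Proposition~\ref{prop:MZSSLNiid}.

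The paper closes precisely this gap by truncating at a different level of the hierarchy: not the kernel values $h\pr{\eps_i,\eps_j}$, but the whole row sums $D_j$, and at a threshold $2^{2N/p}$ tied to the dyadic block rather than to the row index $j$. Inside the block $\ens{2\le n\le 2^N}$ one writes $D_j=D'_j+D''_j$ with $D'_j$ the conditionally centered truncation of $D_j$ at $2^{2N/p}$; Doob's inequality plus orthogonality of martingale increments handles $\sum_j D'_j$, Markov's inequality in $\mathbb L^1$ handles $\sum_j D''_j$, and both reduce to integrals of the single-row tail $\PP\ens{\abs{D_j}>2^{2N/p}s}$ against the integrable weight $\min\ens{1,s}$. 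That tail is then controlled by the second martingale structure you identified but did not exploit quantitatively: $D_j$ has the same law as $\sum_{i=1}^{j-1}h\pr{\eps_0,\eps_i}$, a martingale whose conditional $p$-th moments all equal $\E{\abs{h\pr{\eps_0,\eps_1}}^p\mid\sigma\pr{\eps_0}}$, so the Fuk--Nagaev-type deviation inequality of Proposition~\ref{prop:inegalite_deviation_suites_iid} applies with explicit constants. Summing the resulting bounds over the dyadic blocks converges thanks to \eqref{eq:moment_ordre_p_queue}, i.e. $\sum_{N\ge 1}2^{2N}\PP\ens{Y>2^{2N/p}}\le 2\E{Y^p}$; this summation is what replaces your divergent harmonic series, because the truncation threshold is constant on each block and the $j$-dependence enters only through the factor $j-1\le 2^N$ absorbed into the weight $2^{2N}$. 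No two-index bookkeeping is needed. As written, your argument assumes the central estimate rather than proving it; to complete it you would need either to reproduce this row-sum truncation scheme or to carry out, in full, the double truncation you allude to.
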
 

\begin{Corollary}\label{cor:MZSSLNiid}
 Let $\pr{\eps_i}_{i\in \Z}$ be an i.i.d. sequence with values in $\mathbb R^d$ and let 
$h\colon \R^d\times\R^d\to \R$ be a measurable function. For $1\leq p<2$, define
\begin{equation}
M_{p}:=\sup_{n\geq 1}\frac{1}{n^{1+1/p}}\abs{U_n^{\ind}\pr{h,\pr{\eps_i}_{i\in \Z}} }.
\end{equation}
Assume that $\E{\abs{h\pr{\eps_0,\eps_1}}^p}$ is finite and
$\E{h\pr{\eps_0,\eps_1}}=0$. Then the following
statements hold:
\begin{enumerate}
\item\label{itm:law_of_large_numbers_iid_cor} the sequence $\pr{n^{-1-1/p}
\abs{U_n^{\ind}\pr{h,\pr{\eps_i}_{i\in \Z}} 
}  }_{n\geq 1}$ converges
 to zero almost surely ;
\item\label{itm:contrl_Mp_iid_cor} for any positive $x$, 
\begin{equation}\label{eq:contrl_Mp_iid}
x^p\PP\ens{M_p >x }\leq \kappa_p 
\E{\abs{h\pr{\eps_0,\eps_1}}^p},
\end{equation}
where $\kappa_p$ is bigger than $1$ and depends only on $p$.
\end{enumerate}

\end{Corollary}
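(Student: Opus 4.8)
The plan is to reduce the statement to Proposition~\ref{prop:MZSSLNiid} by means of the Hoeffding decomposition \eqref{eq:Hoeffding_iid}, applied here to the i.i.d. data $\pr{\eps_i}_{i\in\Z}$ itself. Since $h$ is symmetric and $\theta=\E{h\pr{\eps_0,\eps_1}}=0$ by assumption, writing $h_1\pr{x}=\E{h\pr{\eps_0,x}}$ and $h_2\pr{x,y}=h\pr{x,y}-h_1\pr{x}-h_1\pr{y}$, the decomposition \eqref{eq:Hoeffding_iid} reads
\begin{equation}
U_n^{\ind}\pr{h,\pr{\eps_i}_{i\in\Z}}=n\sum_{i=1}^n h_1\pr{\eps_i}+U_n^{\ind}\pr{h_2,\pr{\eps_i}_{i\in\Z}}.
\end{equation}
First I would treat the two summands separately: the \emph{linear part} $n\sum_{i=1}^n h_1\pr{\eps_i}$, which after normalization by $n^{1+1/p}$ becomes $n^{-1/p}\sum_{i=1}^n h_1\pr{\eps_i}$, and the \emph{degenerate part} $U_n^{\ind}\pr{h_2,\pr{\eps_i}_{i\in\Z}}$.

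For the degenerate part, I would first check that $h_2$ satisfies the hypotheses of Proposition~\ref{prop:MZSSLNiid}. Since $\E{h\pr{\eps_0,x}}=h_1\pr{x}$ and $\E{h_1\pr{\eps_0}}=0$, one gets $\E{h_2\pr{\eps_0,x}}=0$ for every $x$, so $h_2$ is degenerate. Moreover conditional Jensen gives $\E{\abs{h_1\pr{\eps_1}}^p}\leq \E{\abs{h\pr{\eps_0,\eps_1}}^p}$, whence $\E{\abs{h_2\pr{\eps_0,\eps_1}}^p}\leq 3^p\E{\abs{h\pr{\eps_0,\eps_1}}^p}<\infty$. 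Thus Proposition~\ref{prop:MZSSLNiid} applies to the kernel $h_2$. Because $p\geq 1$ entails $2/p\leq 1+1/p$, one has $n^{-1-1/p}\abs{U_n^{\ind}\pr{h_2,\pr{\eps_i}}}\leq n^{-2/p}\abs{U_n^{\ind}\pr{h_2,\pr{\eps_i}}}$ for every $n$, so both the almost sure convergence to zero and the weak-type tail bound of Proposition~\ref{prop:MZSSLNiid} transfer to the degenerate part with the normalization $n^{1+1/p}$.

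For the linear part, $\pr{h_1\pr{\eps_i}}_i$ is an i.i.d.\ centered sequence in $\mathbb L^p$. The almost sure convergence $n^{-1/p}\sum_{i=1}^n h_1\pr{\eps_i}\to 0$ is exactly the classical Marcinkiewicz–Zygmund strong law of large numbers, which, combined with the degenerate part, settles the almost sure statement. For the tail bound I would invoke the weak-type maximal version: there is $C_p$ with
\begin{equation}
x^p\PP\ens{\sup_{n\geq1}n^{-1/p}\abs{\textstyle\sum_{i=1}^n h_1\pr{\eps_i}}>x}\leq C_p\E{\abs{h_1\pr{\eps_1}}^p}.
\end{equation}
For $p=1$ this is the maximal inequality for the reversed martingale $S_n/n=\E{h_1\pr{\eps_1}\mid \sigma\pr{S_n,S_{n+1},\dots}}$, while for $1<p<2$ it comes from the truncation-and-blocking argument underlying the Marcinkiewicz–Zygmund law.

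Finally I would combine the two estimates. From the decomposition one has $M_p\leq \sup_n n^{-1/p}\abs{\sum_{i=1}^n h_1\pr{\eps_i}}+\sup_n n^{-2/p}\abs{U_n^{\ind}\pr{h_2,\pr{\eps_i}}}$; splitting the event $\ens{M_p>x}$ into the two events where each supremum exceeds $x/2$ and applying the union bound with the two weak-type inequalities above yields $x^p\PP\ens{M_p>x}\leq \kappa_p\E{\abs{h\pr{\eps_0,\eps_1}}^p}$ for a suitable $\kappa_p>1$. The only genuinely nontrivial ingredient is the weak-type maximal inequality for the i.i.d.\ sum $\sum_{i=1}^n h_1\pr{\eps_i}$; the rest is bookkeeping once Proposition~\ref{prop:MZSSLNiid} and Hoeffding's decomposition are available.
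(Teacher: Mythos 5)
Your proposal is correct and it is precisely the derivation the paper leaves implicit (Corollary~\ref{cor:MZSSLNiid} is stated without proof): Hoeffding's decomposition with $\theta=0$ splits $U_n^{\ind}\pr{h,\pr{\eps_i}_{i\in\Z}}$ into a linear part and a degenerate part, the latter falling under Proposition~\ref{prop:MZSSLNiid} because $\E{h_2\pr{\eps_0,x}}=0$, $\E{\abs{h_2\pr{\eps_0,\eps_1}}^p}\leq 3^p\E{\abs{h\pr{\eps_0,\eps_1}}^p}$ and $n^{-1-1/p}\leq n^{-2/p}$ for $p\geq 1$, the former being handled by the Marcinkiewicz--Zygmund law plus a weak-type maximal inequality, after which the union bound gives \eqref{eq:contrl_Mp_iid}. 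Two details worth recording: the exact identity carries the coefficient $n-1$, not $n$, in front of $\sum_{i=1}^n h_1\pr{\eps_i}$ (a typo you inherit from \eqref{eq:Hoeffding_iid}, harmless here since $\pr{n-1}n^{-1-1/p}\leq n^{-1/p}$), and for $1<p<2$ the maximal inequality you invoke for the linear part is available inside the paper as Proposition~\ref{prop:fct_max_LGN_martingales} applied to the identically distributed martingale differences $d_i=h_1\pr{\eps_i}$, while your reverse-martingale argument covers $p=1$.
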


The previous Proposition gives a control in terms of the 
weak-$\mathbb L^p$-semi-norm, defined as $\norm{X}_{p,w}:= 
\sup_{x>0}\pr{x^p\PP\ens{\abs X>x}}^{1/p}$, $p>1$. However, the 
triangular inequality may fail, which is not convenient in view of the 
decomposition obtained in Proposition~\ref{prop:Hoeffding_decomposition}. 
For this reason, we introduce the weak-$\mathbb L^p$-norm, defined as 
\begin{equation}
 \norm{X}_{p,\infty}:= \sup_{A:\PP\pr{A}>0}\PP\pr{A}^{1/p-1}
 \E{\abs{X}\mathbf 1_A}.
\end{equation}
Notice that for all fixed $p>1$, there exists constants $c_p$ and $C_p$ 
such that for all random variable $X$, $c_p\norm{X}_{p,w}\leq 
\norm{X}_{p,\infty}\leq C_p\norm{X}_{p,w}$. Moreover, the observation that for any sequence 
of non-negative random variables $\pr{Y_n}_{n\geq 1}$, 
\begin{equation}\label{eq:norm_lp_faible_supremum}
\norm{\sup_{n\geq 1}Y_n}_{p,w}^p\leq \sum_{n\geq 1}
\norm{ Y_n}_{p,w}^p
\end{equation}
will be useful in the sequel.

We are now in position to present our first result, which gives a control 
on the maximal function of a U-statistic whose data is a functional 
of an i.i.d. sequence.

 \begin{Theorem}
 \label{thm:bounded_LLN}
 Let $\pr{\eps_u}_{u\in\Z}$ be an i.i.d. sequence, 
 $f\colon\R^\Z\to \R$ and $h\colon\R\times \R\to\R$  be   measurable functions. Let $p\in [1,2)$. Then 
 \begin{multline}
  \norm{\sup_{n\geq 1}\frac 1{n^{1+1/p}}\abs{
  U_n\pr{h,f,\pr{\eps_u}_{u\in\Z}-
  \E{U_n\pr{h,f,\pr{\eps_u}_{u\in\Z}
  }}
  } }}_{p,\infty}\\
  \leq c_p \pr{\theta_{0,p}+
  \sum_{\ell\geq 1} \ell^{1-1/p}\theta_{\ell,p}}.
 \end{multline}
 \end{Theorem}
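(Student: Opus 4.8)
The plan is to reduce the whole estimate, via the series decomposition \eqref{eq:decompositio_avec_f_l}, to a level-by-level bound. Writing $U_n\pr{h,f,\pr{\eps_u}_{u\in\Z}}=\sum_{\ell\geq 0}U_n^{\pr{\ell}}$ with $U_n^{\pr{0}}=\sum_{1\leq i<j\leq n}h\pr{f_0\pr{V_{i,0}},f_0\pr{V_{j,0}}}$ and, for $\ell\geq 1$, $U_n^{\pr{\ell}}=\sum_{1\leq i<j\leq n}\pr{h\pr{f_\ell\pr{V_{i,\ell}},f_\ell\pr{V_{j,\ell}}}-h\pr{f_{\ell-1}\pr{V_{i,\ell-1}},f_{\ell-1}\pr{V_{j,\ell-1}}}}$, the decisive point is that $\norm{\cdot}_{p,\infty}$ obeys the triangle inequality (this is exactly why it was introduced), so that
\[
\norm{ \sup_{n\geq 1} \frac{1}{n^{1+1/p}} \abs{ U_n\pr{h,f,\pr{\eps_u}_{u\in\Z}} - \E{ U_n\pr{h,f,\pr{\eps_u}_{u\in\Z}} } } }_{p,\infty} \leq \sum_{\ell\geq 0} \norm{ \sup_{n\geq 1} \frac{1}{n^{1+1/p}} \abs{ U_n^{\pr{\ell}} - \E{ U_n^{\pr{\ell}} } } }_{p,\infty}.
\]
It then suffices to bound the $\ell=0$ summand by $c_p\theta_{0,p}$ and, for $\ell\geq 1$, the $\ell$-th summand by $c_p\ell^{1-1/p}\theta_{\ell,p}$. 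Here the data of $U_n^{\pr{\ell}}$ is a functional of $2\ell+1$ consecutive innovations, and the defining increment has $\mathbb L^p$-norm at most $2\theta_{\ell,p}$ by \eqref{eq:definition_de_la_mesure_de_dependence} and stationarity.

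For $\ell=0$ the data $f_0\pr{V_{k,0}}=f_0\pr{\eps_k}$ is i.i.d., so $U_n^{\pr{0}}$ is an honest U-statistic of independent data and I would apply Corollary~\ref{cor:MZSSLNiid} to the centered kernel $\pr{x,y}\mapsto h\pr{f_0\pr{x},f_0\pr{y}}-\E{h\pr{f_0\pr{\eps_0},f_0\pr{\eps_1}}}$, whose $\mathbb L^p$-norm is at most $2\theta_{0,p}$; combined with $\norm{\cdot}_{p,\infty}\leq C_p\norm{\cdot}_{p,w}$ this gives $c_p\theta_{0,p}$. For $\ell\geq 1$ I would feed $U_n^{\pr{\ell}}$ into the generalized Hoeffding decomposition of Proposition~\ref{prop:Hoeffding_decomposition} and estimate each resulting family separately. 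The leading, stationary family is, up to the prefactor $\pr{4\ell+1}\ent{n/\pr{4\ell+1}}\sim n$, a partial sum of the stationary, $2\ell$-dependent, mean-zero sequence $D_k^{\pr{\ell}}:=\E{h\pr{f_\ell\pr{V_{k,\ell}},f_\ell\pr{V'_{0,\ell}}}\mid V_{k,\ell}}-\E{h\pr{f_{\ell-1}\pr{V_{k,\ell-1}},f_{\ell-1}\pr{V'_{0,\ell-1}}}\mid V_{k,\ell-1}}$; since the boundary innovations of $V_{k,\ell}$ are independent of the level-$\pr{\ell-1}$ term, the two conditional expectations collapse into $\E{\cdot\mid V_{k,\ell}}$, whence $\norm{D_k^{\pr{\ell}}}_p\leq\theta_{\ell,p}$ by contractivity. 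Splitting the index $k$ into the $2\ell+1$ residue classes modulo $2\ell+1$ makes each class i.i.d., so a classical Marcinkiewicz--Zygmund maximal inequality controls each class sum; as each class carries $\sim n/\pr{2\ell+1}$ terms, the change of normalisation costs $\pr{2\ell+1}^{-1/p}$, and summing the $2\ell+1$ classes by the triangle inequality yields exactly $\pr{2\ell+1}^{1-1/p}\theta_{\ell,p}$. This is the family that produces the announced power $\ell^{1-1/p}$.

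The remaining families are the delicate part. The degenerate statistics $U^{\ind}_{\ent{n/\pr{2\ell}}}\pr{h^{\pr{\ell}}_{a,b},\pr{\eps_i^{a,b}}}$ are genuinely degenerate with independent data, so Proposition~\ref{prop:MZSSLNiid} applies, and since $\norm{h^{\pr{\ell}}_{a,b}\pr{\eps_0,\eps_1}}_p\lesssim\theta_{\ell,p}$ while each statistic runs over $\sim n/\ell$ points, each one contributes $\lesssim\ell^{-2/p}\theta_{\ell,p}$ in $\norm{\cdot}_{p,w}$. Here the naive triangle inequality over the $\pr{4\ell+2}^2$ residue pairs would cost a factor $\ell^{2-2/p}$, which is far too large; instead I would sum the $p$-th powers of the weak norms, as permitted by \eqref{eq:norm_lp_faible_supremum} together with the independence across blocks built into the decomposition, bringing this family down to $\lesssim\theta_{\ell,p}$. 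The remainder terms $R_{n,1,1},R_{n,1,2},R_{n,2},\dots,R_{n,6}$ are sums of the increments $H^{\pr{\ell}}_{i,j}$ over near-diagonal bands of width $\sim\ell$, and must be handled by the same device: exploiting the independence of the $\sim n/\pr{4\ell+2}$ blocks to sum $p$-th powers rather than norms, so that each $R_{n,k}$ contributes at most $\lesssim\ell^{1-1/p}\theta_{\ell,p}$. Assembling the four families and summing over $\ell$ then gives the claimed bound $c_p\pr{\theta_{0,p}+\sum_{\ell\geq 1}\ell^{1-1/p}\theta_{\ell,p}}$.

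I expect the main obstacle to be precisely this passage from triangle-inequality estimates to $p$-th-power summation for the degenerate statistics and the remainders: because a single level $\ell$ produces $\sim\ell^2$ residue pairs, only the orthogonality and independence structure injected by the blocking at scale $4\ell+2$ in Proposition~\ref{prop:Hoeffding_decomposition} keeps the exponent of $\ell$ at $1-1/p$ rather than $2-2/p$. Verifying that every remainder band genuinely inherits that structure, and that the book-keeping of the block sizes $4\ell+2$ against the normalisation $n^{1+1/p}$ is uniform in $n$ (which is what allows the suprema over $n$ to be absorbed since $1/p-1\leq 0$), is where the real work lies.
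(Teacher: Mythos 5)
Your architecture matches the paper's (telescoping in $\ell$, a generalized Hoeffding decomposition at each level, residue-class splitting for the stationary family, Corollary~\ref{cor:MZSSLNiid} at level $0$), but the device you single out as carrying the real work --- replacing the triangle inequality by $p$-th-power summation of weak norms over the residue pairs $\pr{a,b}$ --- is not valid, and this is a genuine gap. The inequality \eqref{eq:norm_lp_faible_supremum} controls the weak norm of a \emph{supremum} of non-negative random variables, via $\PP\ens{\sup_n Y_n>x}\leq\sum_n\PP\ens{Y_n>x}$; it says nothing about a \emph{sum}, and no inequality of the form $\norm{\sum_k X_k}_{p,w}^p\leq\sum_k\norm{X_k}_{p,w}^p$ holds in general. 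Nor can independence rescue it: for two pairs $\pr{a,b}\neq\pr{a',b'}$ at the same level $\ell$, the block vectors $\eps_u^{a,b}$ and $\eps_u^{a',b'}$ of \eqref{eq:definition_eps_ab_a_moins_b_petit} and \eqref{eq:definition_eps_ab_a_moins_b_grand} are overlapping windows of the \emph{same} innovations, so the degenerate U-statistics (and remainder sums) attached to different pairs are not independent of one another.

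The detour is also unnecessary, because your per-statistic estimate $\ell^{-2/p}\theta_{\ell,p}$ is too crude: it compares $\abs{U^{\ind}_m}$ with $n^{2/p}$ instead of $n^{1+1/p}$. Keeping the full normalization, if $m=\ent{n/\pr{4\ell+2}}\geq 1$ then $n\geq\pr{4\ell+2}m$, hence
\begin{equation*}
\frac1{n^{1+1/p}}\abs{U^{\ind}_m}
\leq\pr{4\ell+2}^{-1-1/p}\frac1{m^{1+1/p}}\abs{U^{\ind}_m}
\leq\pr{4\ell+2}^{-1-1/p}\frac1{m^{2/p}}\abs{U^{\ind}_m},
\end{equation*}
since $m^{-1-1/p}\leq m^{-2/p}$ for $p\geq 1$; item~\ref{itm:contrl_Mp_iid} of Proposition~\ref{prop:MZSSLNiid}, applied to the degenerate kernel $h^{\pr{\ell}}_{a,b}$, then gives a per-pair bound $c_p\pr{4\ell+2}^{-1-1/p}\theta_{\ell,p}$. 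The ordinary triangle inequality for $\norm{\cdot}_{p,\infty}$ over the $\pr{4\ell+2}^2$ pairs now yields $c_p\pr{4\ell+2}^{1-1/p}\theta_{\ell,p}$, which is exactly the announced rate; this per-pair gain of $\pr{4\ell+2}^{-1-1/p}$ is precisely what the paper's Lemmas~\ref{lem:contribution_LGN_terms_H_a_lu+b}, \ref{lem:contribution_LGN_terms_H_lu+a_lu+b} and \ref{lem:contribution_LGN_terms_H_a_lu,n+b} establish. One further caveat: the remainder $R_{n,1,2}$ does not reduce to such counting either --- in Lemma~\ref{lem:contribution_R1_LGN} the paper centers its summands by $\E{\cdot\mid V_{j,\ell}}$, applies Burkholder's inequality to the resulting martingale differences in $k$, and treats the centering term (which does not depend on $k$) via Proposition~\ref{prop:fct_max_LGN_martingales}; your sketch needs an argument of this kind for that term as well.
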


 The second result of this Subsection is a Marcinkievicz law of large numbers.
 
 \begin{Theorem}[Marcinkievicz law of large numbers]
 \label{thm:MarcinK_LLN}
 Let $\pr{\eps_u}_{u\in\Z}$ be an i.i.d. sequence, 
 $f\colon\R^\Z\to \R$ and $h\colon\R\times \R\to\R$  be   measurable functions. Let $p\in [1,2)$. Suppose that 
 \begin{equation}
  \sum_{\ell\geq 0}\ell^{1-1/p}\theta_{\ell,p}<+\infty.
 \end{equation}
 Then the following almost sure convergence holds:
 \begin{equation}
  \frac 1{n^{1+1/p}}\abs{
  U_n\pr{h,f,\pr{\eps_u}_{u\in\Z}-\E{U_n\pr{h,f,\pr{\eps_u}_{u\in\Z}
  }}
  } }\to 0.
 \end{equation}

 \end{Theorem}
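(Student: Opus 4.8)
The plan is to combine the maximal inequality of Theorem~\ref{thm:bounded_LLN} with a genuine almost sure convergence statement for a U-statistic built from finitely many innovations, the error between the two being controlled in weak norm by the tail of the series. Writing the telescoping decomposition \eqref{eq:decompositio_avec_f_l} in centered form, set
\[
D_n^{\pr{0}}:=\sum_{1\leq i<j\leq n}\pr{h\pr{f_0\pr{V_{i,0}},f_0\pr{V_{j,0}}}-\E{h\pr{f_0\pr{V_{i,0}},f_0\pr{V_{j,0}}}}}
\]
and, for $\ell\geq 1$, let $D_n^{\pr{\ell}}:=\sum_{1\leq i<j\leq n}H^{\pr{\ell}}_{i,j}$ with $H^{\pr{\ell}}_{i,j}$ as in Proposition~\ref{prop:Hoeffding_decomposition}. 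Then $U_n\pr{h,f,\pr{\eps_u}_{u\in\Z}}-\E{U_n\pr{h,f,\pr{\eps_u}_{u\in\Z}}}=\sum_{\ell\geq 0}D_n^{\pr{\ell}}$, and by telescoping the partial sum $S_n^{\pr{L}}:=\sum_{\ell=0}^L D_n^{\pr{\ell}}$ is exactly the centered U-statistic with the $\pr{2L}$-dependent data $X_i^{\pr{L}}:=f_L\pr{V_{i,L}}$, while $T_n^{\pr{L}}:=\sum_{\ell>L}D_n^{\pr{\ell}}$ is the tail. The first step is to prove, for each fixed $L$, the almost sure convergence $n^{-1-1/p}\abs{S_n^{\pr{L}}}\to 0$; the second is to bound $T_n^{\pr{L}}$ uniformly in $n$ by $\sum_{\ell>L}\ell^{1-1/p}\theta_{\ell,p}$.

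For fixed $L$ I would feed $S_n^{\pr{L}}$ into the generalized Hoeffding decomposition of Proposition~\ref{prop:Hoeffding_decomposition} truncated at level $L$, which expresses it as a \emph{finite} sum of three kinds of terms. The Hájek-type linear terms have the form $c_\ell(n)\sum_{k}\psi_\ell\pr{V_{k,\ell}}$ with $c_\ell(n)=\Theta(n)$; since all other pieces are centered, $\psi_\ell$ may be taken centered, and $\sum_{k\leq n}\psi_\ell\pr{V_{k,\ell}}$ is then a partial sum of a centered $\pr{2\ell}$-dependent $\mathbb L^p$ sequence, so the Marcinkievicz--Zygmund strong law (applied to the $\pr{2\ell+1}$ interlaced i.i.d.\ subsequences) gives $\sum_{k\leq n}\psi_\ell\pr{V_{k,\ell}}=o\pr{n^{1/p}}$ and hence a contribution $o\pr{n^{1+1/p}}$. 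The degenerate terms $U^{\ind}_{\ent{n/2\ell}}\pr{h^{\pr{\ell}}_{a,b},\pr{\eps_i^{a,b}}}$ are U-statistics of independent data, so Corollary~\ref{cor:MZSSLNiid} applies directly and, because $\ent{n/2\ell}\asymp n$, yields $n^{-1-1/p}U^{\ind}_{\ent{n/2\ell}}\pr{h^{\pr{\ell}}_{a,b},\pr{\eps_i^{a,b}}}\to 0$ almost surely. Finally each remainder $R_{n,k}$ (restricted to $\ell\leq L$) must be shown to be $o\pr{n^{1+1/p}}$: for fixed $\ell$ it is either a sum of $O\pr{\ell^2}$ boundary increments $H^{\pr{\ell}}_{i,j}$ with indices near $n$, or a partial-sum/conditionally-independent family of $O\pr{n\ell}$ such increments. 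For the sparse boundary type, a Borel--Cantelli argument using $\E{\abs{H^{\pr{\ell}}_{i,j}}^p}\leq C\theta_{\ell,p}^p$ and $\sum_n n^{-\pr{p+1}}<+\infty$ suffices; for the dense type one uses either the Marcinkievicz--Zygmund law for finite-range-dependent sums or a conditional Rosenthal bound $\norm{\sum_k H^{\pr{\ell}}_{i_k,j}}_p=O\pr{n^{1/2}}$ given the ``large'' index. Being a finite sum, $S_n^{\pr{L}}$ then satisfies $n^{-1-1/p}\abs{S_n^{\pr{L}}}\to 0$ almost surely.

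The tail is handled through the weak norm $\norm{\cdot}_{p,\infty}$, whose triangle inequality is precisely the reason it was introduced. The per-level estimates produced while proving Theorem~\ref{thm:bounded_LLN} give $\norm{\sup_{n\geq 1}n^{-1-1/p}\abs{D_n^{\pr{\ell}}}}_{p,\infty}\leq c_p\ell^{1-1/p}\theta_{\ell,p}$, so summation over $\ell>L$ yields $\norm{\sup_{n\geq 1}n^{-1-1/p}\abs{T_n^{\pr{L}}}}_{p,\infty}\leq c_p\sum_{\ell>L}\ell^{1-1/p}\theta_{\ell,p}=:\delta_L$, with $\delta_L\to 0$ by hypothesis. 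Using $c_p\norm{X}_{p,w}\leq\norm{X}_{p,\infty}$ and $\PP\ens{\abs X>x}\leq x^{-p}\norm{X}_{p,w}^p$, one gets $\PP\ens{\sup_{n\geq 1}n^{-1-1/p}\abs{T_n^{\pr{L}}}>\eps}\leq C_p\eps^{-p}\delta_L^p$ for every $\eps>0$. Combining the two steps, and using that $n^{-1-1/p}\abs{S_n^{\pr{L}}}\to 0$ almost surely, for every $\eps>0$ and every $L$,
\[
\PP\ens{\limsup_{n\to\infty}\frac{1}{n^{1+1/p}}\abs{U_n\pr{h,f,\pr{\eps_u}_{u\in\Z}}-\E{U_n\pr{h,f,\pr{\eps_u}_{u\in\Z}}}}>\eps}\leq C_p\eps^{-p}\delta_L^p .
\]
Letting $L\to+\infty$ forces the left-hand side to vanish, and then letting $\eps\downarrow 0$ delivers the claimed almost sure convergence.

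I expect the genuine obstacle to lie in the head, specifically in showing that every remainder term $R_{n,k}$ of Proposition~\ref{prop:Hoeffding_decomposition} is negligible after normalization by $n^{1+1/p}$. Unlike the linear and degenerate pieces these are not covered by an off-the-shelf strong law, and a naive union bound over all pairs $\pr{i,j}$ fails when $p<2$; one must instead exploit their sparse, boundary structure (few surviving pairs, or conditional independence of the summands given the large index) to produce summable tail probabilities. The remaining ingredients — the triangle inequality for $\norm{\cdot}_{p,\infty}$, the reduction of the tail to $\delta_L$, and the final combination — are comparatively routine once the maximal inequality of Theorem~\ref{thm:bounded_LLN} and the i.i.d.\ law of Corollary~\ref{cor:MZSSLNiid} are in hand.
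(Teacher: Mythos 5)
Your proposal is correct and follows essentially the same route as the paper's proof: split the level decomposition at a finite $L$, obtain almost sure convergence of the fixed-$L$ head from the generalized Hoeffding decomposition combined with the i.i.d./martingale strong laws, control the maximal function of the tail in weak-$\mathbb L^p$ norm by $c_p\sum_{\ell>L}\ell^{1-1/p}\theta_{\ell,p}$, and let $L\to\infty$. Your tail bound even uses the exponent $\ell^{1-1/p}$ consistent with the hypothesis and with Theorem~\ref{thm:bounded_LLN}, whereas the paper's final display states $\sum_{\ell\geq L}\ell^{2}\theta_{\ell,p}$, which appears to be a slip.
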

 
 A law of large numbers has been obtained in 
 \cite{MR1851171}, Theorem~6. They considered the classical 
 law of large numbers ($p=1$) but for functions of a 
 $\beta$-mixing process. When applied to a Lipschitz-continuous 
 kernel and a function of an i.i.d. process, their condition 
 requires 
 \begin{equation}
\sum_{\ell\geq 1}   \norm{f\pr{ \pr{\eps_{-u}}_{u\in\Z}}- 
   \E{f\pr{ \pr{ \eps_{-u}}_{u\in\Z}}\mid \eps_{-\ell}, 
   \dots,\eps_\ell}  }_1<+\infty.
 \end{equation}
When $ f\pr{ \pr{ \eps_{-u}}_{u\in\Z}}$ is a linear process 
with Bernoulli innovations, that is, 
$f\pr{ \pr{\eps_{-u}}_{u\in\Z}}=\sum_{i\in\Z}a_i\eps_{-i}$ 
and $\PP\ens{\eps_0=1}=\PP\ens{\eps_0=-1}=1/2$, 
this is equivalent, by Marcinkievicz-Zygmund inequality, to 
\begin{equation}
\sum_{\ell\geq 1}
\pr{\sum_{i\in\Z,\abs{i}\geq\ell}a_i^2  }^{1/2} <+\infty,
 \end{equation}
while our condition read $\sum_{i\in\Z}\abs{a_i}<+\infty$ and 
is less restrictive. However, in \cite{MR1851171}, the case 
of functional of $\beta$-mixing process and of a not necessarily 
continuous kernel is treated there and not in the present paper.
 
 \subsection{Bounded law of the iterated logarithms} 
 \label{subsec:LLI}
 
Let us present our next result concerning the bounded law 
of the iterated logarithms. Like in the independent case, 
one can control the moments of order $p\in \pr{1,2}$ of 
the maximal function of U-statistic with the normalisation 
$n^{3/2}\LL{n}$.

 \begin{Theorem}\label{thm:LLI_Bernoulli}
   Let $\pr{\eps_u}_{u\in\Z}$ be an i.i.d. sequence, 
 $f\colon\R^\Z\to \R$ and $h\colon\R\to\R$  be   measurable functions. Let $p\in [1,2)$. For all $1\leq p<2$, the following inequality holds:
 \begin{multline}
  \norm{\sup_{n\geq 1}
  \frac 1{n^{3/2}
  \sqrt{\LL{n}}}\abs{U_n\pr{h,f,\pr{\eps_u}_{u\in\Z}
 -\E{U_n\pr{h,f,\pr{\eps_u}_{u\in\Z}
  }} }}}_p \\ \leq c_p  
  \pr{\theta_{0,2}+\sum_{\ell\geq 1}\ell^{1/2}\theta_{\ell,2}},
 \end{multline}
 where $c_p$ depends only on $p$.
 \end{Theorem}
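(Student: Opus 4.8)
The plan is to follow the same blueprint as the law of large numbers results (Theorems~\ref{thm:bounded_LLN} and~\ref{thm:MarcinK_LLN}), replacing the Marcinkievicz-Zygmund maximal inequality for i.i.d.\ U-statistics by its law-of-the-iterated-logarithms analogue. Concretely, I would first establish an i.i.d.\ building block: for a degenerate kernel $h$ with $h(\eps_0,\eps_1)\in\mathbb L^2$ and $\E{h(\eps_0,x)}=0$ for all $x$, the maximal function $\sup_{n\geq 1}n^{-3/2}(\LL n)^{-1/2}\abs{U_n^{\ind}(h,(\eps_i)_i)}$ has finite $\mathbb L^p$-norm for every $1\le p<2$, bounded by $c_p\norm{h(\eps_0,\eps_1)}_2$. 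This is the bounded LIL for degenerate U-statistics of independent data; it is quoted in the introduction as Theorem~2.5 of \cite{MR1348376} and can be taken as the analogue of Proposition~\ref{prop:MZSSLNiid} in the present setting. The linear (Hoeffding) part reduces to a bounded LIL for partial sums of an i.i.d.\ sequence, which is classical. These two ingredients take care of the leading terms produced by the generalized Hoeffding decomposition.

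Next I would feed the decomposition of Proposition~\ref{prop:Hoeffding_decomposition} into the $\mathbb L^p$-norm of the maximal function. By the triangle inequality in $\mathbb L^p$, it suffices to bound separately each of the four structural pieces (the linear term indexed by $f_0$, the $U_n^{\ind}(h^{(0)},\cdot)$ term, the series of linear $f_\ell-f_{\ell-1}$ increments, and the series $\sum_\ell\sum_{a,b}U_{\ent{n/(2\ell)}}^{\ind}(h_{a,b}^{(\ell)},\cdot)$ of degenerate U-statistics), together with each of the remainder terms $R_{n,1,1},R_{n,1,2},R_{n,2},\dots,R_{n,6}$. For the $\ell$-th block the relevant $\mathbb L^2$-size of the increment kernel is $\theta_{\ell,2}$, by definition~\eqref{eq:definition_de_la_mesure_de_dependence}. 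The key is that each block of the degenerate part is a U-statistic based on roughly $n/(4\ell+2)$ independent groups, so applying the i.i.d.\ bounded LIL at the scale $\ent{n/(2\ell)}$ rather than $n$ produces a factor $\ell^{1/2}$ (from the $n^{3/2}$ normalisation rescaling as $(n/\ell)^{3/2}$ against $n^{3/2}$, i.e.\ a gain of $\ell^{-3/2}$ offset by the $(4\ell+2)^2$ pairs of residue classes $a,b$), which is exactly what converts $\sum_\ell\theta_{\ell,2}$ into the stated weight $\sum_\ell\ell^{1/2}\theta_{\ell,2}$. The linear increment series contributes only $\sum_\ell\theta_{\ell,2}$, which is dominated by the $\ell^{1/2}$-weighted sum.

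The main obstacle, as in the companion theorems, will be the remainder terms $R_{n,k}$. These are not themselves degenerate U-statistics nor partial sums, so they cannot be handled by either building block directly. The plan is to recognise each $H^{(\ell)}_{i,j}$ as a centered increment with $\norm{H^{(\ell)}_{i,j}}_2\le 2\theta_{\ell,2}$ and to exploit the near-disjointness of the index blocks of width $4\ell+2$: within each fixed residue structure the summands $H^{(\ell)}_{\cdot,\cdot}$ form (reverse) martingale difference arrays or sums of independent blocks, because of the $(2\ell+1)$-separation of the dependence ranges built into the ranges of $a,b$ in the definitions of $R_{n,2},\dots,R_{n,6}$. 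I would therefore bound each remainder by first fixing $\ell$ and $(a,b)$, controlling the maximal function of the resulting one-parameter sum by a Doob-type or Móricz maximal inequality at the LIL scale, and then summing over the $O(\ell^2)$ admissible pairs $(a,b)$; the number of pairs is what generates the $\ell^{1/2}$ weight (the count $O(\ell)$ times an $\ell^{-1/2}$ gain from the block length in the $(\LL n)^{1/2}$-normalised maximal inequality). The boundary remainders $R_{n,1,1}$ and $R_{n,1,2}$ are over the incomplete last block and are negligible since they involve only $O(\ell)$ indices per $\ell$. Summing all contributions and invoking~\eqref{eq:norm_lp_faible_supremum} (or its strong-$\mathbb L^p$ counterpart, since $p<2$ makes the $\mathbb L^p$ triangle inequality available) yields the bound $c_p(\theta_{0,2}+\sum_{\ell\ge1}\ell^{1/2}\theta_{\ell,2})$; the delicate bookkeeping of which remainder supplies which power of $\ell$ is where the real work lies.
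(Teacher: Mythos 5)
Your proposal follows essentially the same route as the paper's proof: feed the decomposition of Proposition~\ref{prop:Hoeffding_decomposition} into the maximal function, treat the main terms by the i.i.d.\ LIL building blocks (the Arcones--Gin\'e bound for the degenerate U-statistics, and the Pisier-type maximal inequality of Proposition~\ref{prop:fct_max_LLI_fct_de_ell_iid} for the stationary parts), and control the remainders $R_{n,1,1}$, $R_{n,1,2}$, $R_{n,2},\dots,R_{n,6}$ by martingale and blocking arguments in which the summation over residue classes produces the $\ell^{1/2}$ weight --- exactly the scheme carried out in Lemmas~\ref{lem:contribution_R1_LLI}--\ref{lem:contribution_LLI_terms_H_lu+a_lu+b}. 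One small correction: the linear increment series does not contribute merely $\sum_{\ell}\theta_{\ell,2}$, since $Y_{k,\ell}-Y_{k,\ell-1}$ is a functional of $2\ell+1$ i.i.d.\ variables and the splitting into residue classes behind Proposition~\ref{prop:fct_max_LLI_fct_de_ell_iid} costs a factor $\ell^{1/2}$, so this series also contributes $\sum_{\ell}\ell^{1/2}\theta_{\ell,2}$, which is still within the stated bound.
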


 \subsection{Central limit theorem} 
  \label{subsec:TLC}
 
Let us present our result concerning the central limit theorem. We will 
make essentially three assumptions on the dependence of our U-statistic. 
The first and second one involve the coefficients $\delta_{\cdot,2}$ and 
$\theta_{\ell,1}$ respectively and the third condition is imposed 
is summability of the family of covariances of a strictly stationary 
sequence which comes from the generalized Hoeffding decomposition 
of Proposition~\ref{prop:Hoeffding_decomposition}.
 
\begin{Theorem}\label{thm:TLC}
Let $h\colon\R^2\to\R$ be a measurable function, $f\colon \R^{\Z}\to 
\R$ be a measurable function and $\pr{\eps_u}_{u\in \Z}$ be an 
i.i.d. sequence. Let 
\begin{equation}
X_j:=f\pr{
\pr{\eps_{j-k}}_{k\in \Z}
}, \quad U_n:=\sum_{1\leq i<j\leq n}h\pr{X_i,X_j}.
\end{equation}
Suppose that 

 \begin{equation}\label{eq:hypothese_sur_theta_TLC}
  \sum_{\ell\geq 0}\ell^{1/2}\theta_{\ell,2}<+\infty; \quad
  \sum_{\ell\geq 0}\ell^{2}\theta_{\ell,1}<+\infty
 \end{equation}
 and that 
 \begin{equation}\label{eq:hypothese_covariance}
 \sum_{k\in\Z}\abs{ 
\operatorname{Cov}\pr{Y_0,Y_k} 
 }<+\infty,
 \end{equation}
where the random variable $Y_k$ is defined by the following 
$\mathbb L^2$-convergence 
\begin{equation}
Y_k=\lim_{\ell\to +\infty}
 \E{h\pr{f_\ell\pr{V_{k,\ell}},f_{\ell}\pr{  V'_{0,\ell}  }} \mid V_{k,\ell}  }
-\E{h\pr{f_\ell\pr{V_{k,\ell}},f_{\ell}\pr{  V'_{0,\ell}  }}   }
\end{equation}
and $V_{k,\ell}=\pr{\eps_u}_{u=k-\ell}^{k+\ell}$, 
$V'_{k,\ell}=\pr{\eps'_u}_{u=k-\ell}^{k+\ell}$, where 
$\pr{\eps'_u}_{u\in \Z}$ is an independent copy of $\pr{\eps_u}_{u\in \Z}$.

Then the following convergence in distribution holds:
\begin{equation}
\frac 1{n^{3/2}}\pr{U_n-\E{U_n}}\to N\pr{0,\sigma^2},
\end{equation}
where 
\begin{equation}
\sigma^2:=\sum_{k\in\Z} 
\operatorname{Cov}\pr{Y_0,Y_k}.
\end{equation}
\end{Theorem}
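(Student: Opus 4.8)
The plan is to substitute the generalized Hoeffding decomposition of Proposition~\ref{prop:Hoeffding_decomposition} into the normalization by $n^{3/2}$ and to show that, among the many pieces it produces, only the projection (linear) part contributes to the limit, while every degenerate U-statistic $U^{\ind}$ and every remainder $R_{n,1,1},R_{n,1,2},R_{n,2},\dots,R_{n,6}$ vanishes in probability after division by $n^{3/2}$. The conclusion then follows from a central limit theorem for the single surviving term combined with Slutsky's lemma.

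First I would identify that surviving term. Writing $G_{k,\ell}:=\E{h\pr{f_\ell\pr{V_{k,\ell}},f_\ell\pr{V'_{0,\ell}}}\mid V_{k,\ell}}$, so that $Y_k=\lim_\ell\pr{G_{k,\ell}-\E{G_{k,\ell}}}$ in $\mathbb L^2$, the two families of terms carrying the prefactors $n$ and $\pr{4\ell+1}\ent{n/\pr{4\ell+1}}$ telescope in $\ell$ into $n\sum_{k=1}^n Y_k$, up to (i) the discrepancy between $\pr{4\ell+1}\ent{n/\pr{4\ell+1}}$ and $n$, which is $O\pr{\ell}$ per level, and (ii) the $O\pr{\ell}$ summands by which the truncated inner range falls short of $\ens{1,\dots,n}$. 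Bounding each telescoping increment $G_{k,\ell}-G_{k,\ell-1}$ in $\mathbb L^2$ by $\theta_{\ell,2}$ and each tail $\norm{Y_k-\pr{G_{k,L}-\E{G_{k,L}}}}_2$ by $\sum_{\ell>L}\theta_{\ell,2}$, these corrections are seen to be $o\pr{n^{3/2}}$ thanks to $\sum_{\ell}\ell^{1/2}\theta_{\ell,2}<\infty$. After division by $n^{3/2}$ the problem thus reduces to the convergence $n^{-1/2}\sum_{k=1}^n Y_k\to N\pr{0,\sigma^2}$ for the strictly stationary sequence $\pr{Y_k}_{k\in\Z}$, which is itself a functional of $\pr{\eps_u}_{u\in\Z}$.

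To establish this CLT I would use a finite-dependence approximation. For each $L\geq1$ the truncation $Y_k^{\pr L}:=G_{k,L}-\E{G_{k,L}}$ is a measurable function of $\pr{\eps_{k-L},\dots,\eps_{k+L}}$, so $\pr{Y_k^{\pr L}}_{k}$ is $\pr{2L+1}$-dependent and the Hoeffding--Robbins theorem gives $n^{-1/2}\sum_{k=1}^n Y_k^{\pr L}\to N\pr{0,\sigma_L^2}$ with $\sigma_L^2=\sum_{\abs{k}\leq 2L}\operatorname{Cov}\pr{Y_0^{\pr L},Y_k^{\pr L}}$. Since $\norm{Y_k-Y_k^{\pr L}}_2\leq\sum_{\ell>L}\theta_{\ell,2}\to0$, a standard approximation lemma for long-run variances, using the covariance summability~\eqref{eq:hypothese_covariance}, controls the replacement error $n^{-1/2}\sum_{k=1}^n\pr{Y_k-Y_k^{\pr L}}$ uniformly in $n$ and yields $\sigma_L^2\to\sigma^2=\sum_{k\in\Z}\operatorname{Cov}\pr{Y_0,Y_k}$. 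Letting $L\to\infty$ after $n\to\infty$ then gives the Gaussian limit for the main term.

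It remains to check that the other pieces are negligible. For the degenerate U-statistics one has $\norm{U_m^{\ind}\pr{h_{a,b}^{\pr{\ell}},\cdot}}_2\leq C\,m\,\norm{h_{a,b}^{\pr{\ell}}}_2\leq C\,m\,\theta_{\ell,2}$; with $m=\ent{n/\pr{2\ell}}$ and exploiting the orthogonality of the Hoeffding kernels across one of the two block indices (a square-root gain over the $O\pr{\ell}$ values of that index), summation over the remaining index produces a bound $O\pr{\ell^{1/2}n\,\theta_{\ell,2}}$, so that after division by $n^{3/2}$ and summation over $\ell$ the whole series is $O\pr{n^{-1/2}\sum_{\ell}\ell^{1/2}\theta_{\ell,2}}=o_P\pr{1}$. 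For the remainders I would use $\norm{H^{\pr{\ell}}_{i,j}}_1\leq 2\theta_{\ell,1}$ together with a level-by-level count of summands: the bulk terms $R_{n,1,2},R_{n,2},\dots,R_{n,6}$ contain $O\pr{\ell^2}$ index pairs per block over $O\pr{n/\ell}$ blocks, hence $O\pr{n\ell}$ summands per level and $\norm{\cdot}_1=O\pr{n\sum_{\ell}\ell\,\theta_{\ell,1}}=O\pr{n}$, whereas the boundary term $R_{n,1,1}$, confined to the incomplete last block, has only $O\pr{\ell^2}$ summands per level and no factor of $n$, so $\norm{R_{n,1,1}}_1=O\pr{\sum_\ell\ell^2\theta_{\ell,1}}=O\pr{1}$; both are $o\pr{n^{3/2}}$ under $\sum_\ell\ell^2\theta_{\ell,1}<\infty$. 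The main obstacle is exactly this bookkeeping: for each of the six remainder families one must carefully match the index ranges—in particular the gap constraints $0\leq a-b\leq 2\ell$ and $\pr{2\ell+1}\leq a-b\leq 4\ell+1$ appearing in $R_{n,3},\dots,R_{n,6}$—and verify that the number of surviving summands per level stays within the $O\pr{n\ell}$ (bulk) or $O\pr{\ell^2}$ (boundary) budget that the two hypotheses in~\eqref{eq:hypothese_sur_theta_TLC} can absorb, the $\ell^2$ weight on $\theta_{\ell,1}$ being what guarantees convergence of the boundary remainder series over $\ell$. Assembling the Gaussian limit of the main term with the $\mathbb L^1$- and $\mathbb L^2$-negligibility of all remaining terms through Slutsky's lemma completes the proof.
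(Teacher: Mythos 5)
Your proposal follows the same route as the paper's proof: substitute the decomposition of Proposition~\ref{prop:Hoeffding_decomposition}, obtain the Gaussian limit of $n^{-1/2}\sum_{k=1}^n Y_k$ through the $\pr{2L+1}$-dependent approximation $Y_k^{\pr{L}}$, the CLT for $m$-dependent sequences and the double-limit theorem (Billingsley, Theorem~4.2), and dispose of the remainders by $\mathbb L^1$-counting. Your counts ($O\pr{n\ell}$ summands per level for the bulk remainders, $O\pr{\ell^2}$ with no factor of $n$ for $R_{n,1,1}$, each summand of $\mathbb L^1$-norm at most $2\theta_{\ell,1}$) agree with the paper's, as does your treatment of the prefactor and range discrepancies (the paper's $R_{n,7}$). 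Two points deserve attention, the first being a genuine flaw.

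First, the degenerate U-statistic series (a term the paper's written proof actually passes over in silence, so your attention to it is welcome): the claimed square-root gain from ``orthogonality of the Hoeffding kernels across one of the two block indices'' is not available. For $b\neq b'$ the data blocks $\eps_u^{a,b}$ and $\eps_u^{a,b'}$ are overlapping segments of the same i.i.d. sequence, and fully degenerate kernels evaluated on overlapping blocks need not be orthogonal. For instance, with blocks of length two, $\eps_u^{b}=\pr{\eps_{2u},\eps_{2u+1}}$ and $\eps_u^{b'}=\pr{\eps_{2u+1},\eps_{2u+2}}$, take centered, nonzero $\phi,\psi\in\mathbb L^2$ and
\begin{equation*}
g\pr{x;y}=\phi\pr{x_2}\psi\pr{y_2},\qquad g'\pr{x;y}=\phi\pr{x_1}\psi\pr{y_1};
\end{equation*}
both kernels are degenerate in each argument, yet
\begin{equation*}
\E{g\pr{\eps_0^{b},\eps_1^{b}}\,g'\pr{\eps_0^{b'},\eps_1^{b'}}}
=\E{\phi\pr{\eps_1}^2}\,\E{\psi\pr{\eps_3}^2}>0.
\end{equation*}
Hence the cross terms over different offsets are of the same order as the diagonal ones, and the triangle inequality over the $O\pr{\ell^2}$ pairs $\pr{a,b}$ is all you have; this gives $O\pr{n\ell\,\theta_{\ell,2}}$ per level, and after normalization the series becomes $n^{-1/2}\sum_{\ell}\ell\,\theta_{\ell,2}$, which your hypothesis $\sum_{\ell}\ell^{1/2}\theta_{\ell,2}<+\infty$ does not control. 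The gap is repairable: $U^{\ind}_{\ent{n/\pr{4\ell+2}}}$ is an empty sum as soon as $\ent{n/\pr{4\ell+2}}<2$, so only the levels with $8\ell+4\leq n$ contribute, and
\begin{equation*}
n^{-1/2}\sum_{\ell\,:\,8\ell+4\leq n}\ell\,\theta_{\ell,2}
=\sum_{\ell\,:\,8\ell+4\leq n}\pr{\ell/n}^{1/2}\,\ell^{1/2}\theta_{\ell,2}\to 0
\end{equation*}
by dominated convergence, since each term tends to $0$ and is dominated by the summable bound $\ell^{1/2}\theta_{\ell,2}$. Second, a more minor imprecision: the replacement error $n^{-1/2}\sum_{k=1}^n\pr{Y_k-Y_k^{\pr{L}}}$ is not controlled by the covariance summability \eqref{eq:hypothese_covariance}, which only serves to define $\sigma^2$ and identify $\lim_{L}\sigma_L^2$; what controls it uniformly in $n$ is the $\pr{2\ell+1}$-dependence of each telescoping level, namely $\norm{n^{-1/2}\sum_{k=1}^n\pr{Y_{k,\ell}-Y_{k,\ell-1}}}_2\leq C\ell^{1/2}\theta_{\ell,2}$ together with the tail bound $\sum_{\ell>L}\ell^{1/2}\theta_{\ell,2}\to 0$, exactly as in the paper.
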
 
 
Let us give some comments about this results and how it 
relates with existing results.
 
\begin{Remark}
It is not clear to us whether the condition \eqref{eq:hypothese_sur_theta_TLC} 
guarantees \eqref{eq:hypothese_covariance}. Nevertheless, conditions of the 
type \begin{equation}
\sum_{j\geq 0}\norm{\E{Y_j\mid \sigma\pr{\eps_u,u\leq 0}} 
-\E{Y_j\mid \sigma\pr{\eps_u,u\leq -1}} }_2<\infty
\end{equation}
guarantee \eqref{eq:hypothese_covariance} and are not too hard to check for the 
choices of kernel involved in Subsection~\ref{subsec:applications}.
\end{Remark}
\begin{Remark}
In the context of Theorem~\ref{thm:TLC}, it is possible the 
variance $\sigma^2$ equals $0$. In this case, the normalization 
should change and a deeper analysis is required in order to 
identify the limiting process.
\end{Remark}
\begin{Remark}
It does not seem to be an easy task to compare the condition of 
Theorem~\ref{thm:TLC} with those in \cite{MR2060311} in full generality. One way to measure the 
dependence  in the latter paper is given (in the case where 
the weights are all equal to one) by the coefficients
 \begin{equation}\label{eq:definition_delta_l}
   \delta_{\ell}:= 
   \sup_{j\geq 0}\norm{h\pr{\pr{\eps_{-u}}_{u\in \Z},\pr{\eps_{j-u}}_{u\in \Z}   }-
   h\pr{f_\ell\pr{V_{0,\ell}},f_\ell\pr{V_{j,\ell}}  }
   }_2.
  \end{equation}
  A sufficient condition (which follows from a combination of Theorem~3 
  and Proposition~4) for the central limit theorem is 
  \begin{equation}\label{eq:cond_TLC_Wu}
  \sum_{i\geq 1}\sup_{\ell\geq i}\delta_{\ell}<+\infty.
  \end{equation}
  In a particular case, this condition can be compared with ours. 
  Let $h\colon \pr{x,y}\mapsto x+y$  and $\pr{X_j}_{j\geq 1}$ be 
  a causal linear process:
  \begin{equation}
  X_j:=\sum_{i\geq 0}a_i\eps_{j-i},
  \end{equation}
  where $\pr{\eps_u}_{u\in\Z}$ is i.i.d. centered and square integrable and 
  $\pr{a_i}_{i\in\Z}$ is square summable, 
  conditon \eqref{eq:cond_TLC_Wu} reads 
  \begin{equation}
  \sum_{i\geq 1}\sqrt{\sum_{\ell\geq i}a_\ell^2   }<+\infty
  \end{equation}
  (see Theorem~9). With the same kernel and definition of $\pr{X_j}_{j\geq 1}$, 
  our condition is more restrictive, since the convergence of 
  $\sum_{\ell\geq 0}\ell^{2}\theta_{\ell,1}$ implies that of 
  $\sum_{\ell\geq 0}\ell^{2}a_{\ell}$.
  
  Nevertheless, our result deal with not necessarily causal process. 
\end{Remark}
\begin{Remark}
A central limit theorem has been obtained in 
 \cite{MR1851171}, Theorem~7, for functions of a 
 $\beta$-mixing process. When applied to a Lipschitz-continuous 
 kernel and a function of an i.i.d. process, their condition 
 requires 
 \begin{equation}
\sum_{\ell\geq 1} \ell^2  \norm{f\pr{ \pr{\eps_{-u}}_{u\in\Z}}- 
   \E{f\pr{ \pr{ \eps_{-u}}_{u\in\Z}}\mid \eps_{-\ell}, 
   \dots,\eps_\ell}  }_1<+\infty.
 \end{equation}
When $ f\pr{ \pr{ \eps_{-u}}_{u\in\Z}}$ is a linear process 
with Bernoulli innovations, that is, 
$f\pr{ \pr{\eps_{-u}}_{u\in\Z}}=\sum_{i\in\Z}a_i\eps_{-i}$, 
this is equivalent  to 
\begin{equation}
\sum_{\ell\geq 1}\ell^2
\pr{\sum_{i\in\Z,\abs{i}\geq\ell}a_i^2  }^{1/2} <+\infty,
 \end{equation}
while our condition read $\sum_{\ell\geq 1}\ell^2\abs{a_\ell}
+\ell^2a_{-\ell}<+\infty$ and 
is less restrictive. However, in \cite{MR1851171}, the case 
of functional of $\beta$-mixing process and of a not necessarily 
continuous kernel is treated there and not in the present paper.
\end{Remark} 

\begin{Remark}
In \cite{MR2948902}, an approach based on empirical processes 
has been done in order to derive a central limit theorem 
for $U$-statistics of dependent data. As mentioned in 
Example~3.8, the case of function of i.i.d. processes can be 
treated, since the convergence of empirical processes of 
functions of an i.i.d. sequence has been treated in \cite{MR2384990}. However, in the later paper, the condition 
on the weak dependence relies on the physical dependence 
measure of $\mathbf 1\ens{f\pr{\pr{\eps_{i-u}}_{u\in\Z} } \leq 
t}$ and this does not seem to be comparable with our 
assumptions.
\end{Remark}

 \subsection{Applications}\label{subsec:applications}
  
In this Subsection, we give examples of kernels 
$h$ for which the measure of dependence defined by 
\eqref{eq:definition_de_la_mesure_de_dependence} can be estimated. 

\begin{enumerate}
 \item Uniformly continuous kernel
 
 Let $h\colon \R^2\to\R$ be a measurable function. We assume that 
 there exists a non-negative function $\omega\colon \R_+\to \R_+$ which is 
 increasing, satisfies $\omega\pr{0}=0$ and for each $x,x',y,y'\in\R$,
 \begin{equation}
 \abs{ 
h\pr{x,y}-h\pr{x',y'} 
 }\leq \omega\pr{\abs{x-x'}}+\omega\pr{\abs{y-y'}}.
   \end{equation}
Then the following inequality holds: 
\begin{multline}\label{eq:controle_coeff_dep_unif_cont}
 \norm{  h\pr{f_\ell\pr{V_{0,\ell}},f_\ell\pr{V_{j,\ell}}  }
   -h\pr{f_{\ell-1}\pr{V_{0,\ell-1}},f_{\ell-1}\pr{V_{j,\ell-1}}  }}_p\\
 \leq \norm{\omega\pr{\abs{ f_\ell\pr{V_{0,\ell}}-f_{\ell-1}\pr{V_{0,\ell-1}}  }  }}_p
 +\norm{\omega\pr{\abs{ f_\ell\pr{V_{j,\ell}}-f_{\ell-1}\pr{V_{j,\ell-1}}  }  }}_p.
\end{multline}
Since $\pr{V_{j,\ell},V_{j,\ell-1}}$ has the same distribution as 
$\pr{V_{0,\ell},V_{0,\ell-1}}$, the two terms of the right hand side 
of \eqref{eq:controle_coeff_dep_unif_cont} are equal hence 
\begin{equation}\label{eq:controle_general_coeff_dependence}
\theta_{\ell,p}\leq 2\norm{\omega\pr{\abs{ f_\ell\pr{V_{0,\ell}}-f_{\ell-1}\pr{V_{0,\ell-1}}  }  }}_p.
\end{equation}

In particular, if $h$ is $\alpha$-Hölder continuous for some $\alpha\in\pr{0,1}$, we
 can choose 
$\omega\colon t\mapsto ct^\alpha$ for some constant $c$ and in this case, 
the estimate \eqref{eq:controle_general_coeff_dependence} becomes 
\begin{equation}
\theta_{\ell,p}\leq 2c
\norm{ f_\ell\pr{V_{0,\ell}}-f_{\ell-1}\pr{V_{0,\ell-1}}}_{p\alpha}^\alpha,
\end{equation}
which can be rewritten as 
\begin{equation}
\theta_{\ell,p}\leq 2c
\norm{\E{X_0\mid V_{0,\ell}}-\E{X_0\mid V_{0,\ell-1}}}_{p\alpha}^\alpha.
\end{equation}

 \item Variance estimation. Consider the kernel $h\colon\R^2\to\R$  defined 
 by $h\pr{x,y}:=\pr{x-y}^2/2$.  The associated U-statistic is (after normalization) 
 the classical variance estimator. For this choice of kernel, one can estimate 
 the measure of dependence defined
  by \eqref{eq:definition_de_la_mesure_de_dependence}.
 One needs to control the $\mathbb L^p$-norm of the difference of the square of
 two random variables $Y$ and $Z$. Since 
 \begin{equation}
  \norm{Y^2-Z^2}_p^p= 
  \E{\abs{Y-Z}^p\abs{Y+Z}^p}\leq \E{\abs{Y-Z}^{2p}}^{1/2}\E{\abs{Y+Z}^{2p}}^{1/2}
 \end{equation}
 we derive that 
 \begin{equation}
  \norm{Y^2-Z^2}_p  \leq \norm{Y-Z}_{2p}\pr{\norm{Y}_{2p}+
  \norm{Z}_{2p}}.
 \end{equation}
 We use this for a fixed $\ell\geq 1$ to $Y=
 f_\ell\pr{V_{0,\ell}}-f_\ell\pr{V_{j,\ell}}$ and 
 $Z=f_{\ell-1}\pr{V_{0,\ell-1}}-f_{\ell-1}\pr{V_{j,\ell-1}} $.
 Accounting the following bounds (which are a consequence of 
 stationarity):
 \begin{equation}
  \norm{Y}_{2p}+
  \norm{Z}_{2p}\leq 2 \norm{f_\ell\pr{V_{0,\ell}}}_{2p}
  +2 \norm{f_\ell\pr{V_{0,\ell-1}}}_{2p}\leq 4
  \norm{X_0}_{2p},
 \end{equation}
 we get that 
 \begin{equation}
  \theta_{\ell,p}\leq 2\norm{X_0}_{2p} 
  \norm{\E{X_0\mid V_{0,\ell}}-\E{X_0\mid V_{0,\ell-1}}  }_p.
 \end{equation}

\end{enumerate}

\section{Proofs}  \label{sec:proofs}
  
\subsection{Proof of the generalised Hoeffding's decomposition}

Let us explain the idea of proof of  Proposition~\ref{prop:Hoeffding_decomposition}.
First, we write $U_n\pr{h,f,\pr{\eps_i}_{i\in \Z}  }$ as a series (index by $\ell$) of U-statistics. For the term of index $\ell$, the data of the corresponding U-statistic 
is a function of $2\ell+1$ i.i.d. random variables. We then decompose this a sum of 
$4\pr{2\ell+1}$ U-statistics of independent data. 

We divide the proof in two steps:
\begin{enumerate}
\item\label{step1_Hoeffding_decomposition} First we 
treat the case where $X_j=f\pr{\pr{\eps_u  }_{u=j-\ell}^{j+\ell}}$ and 
$h\colon\R \times \R \to \R$ a symmetric function.
\item\label{step2_Hoeffding_decomposition} Then we write the mentioned U-statistic a sum of U-statistics tractable 
with the work of step \ref{step1_Hoeffding_decomposition}, plus a remainder term. 
\end{enumerate}

Step \ref{step1_Hoeffding_decomposition}: we decompose a
U-statistic whose data is a function of $2\ell+1$ i.i.d. random variables as 
a sum of $4\ell+2$ U-statistics of i.i.d. data plus remainder terms. 

\begin{Lemma}\label{lem:Hoeffding_decomposition_ell_dependent}
Let $\ell \geq 1$ be an  integer,  $h\colon\R^{2\ell+1}
\times\R^{2\ell+1}\to \R$
 be a  measurable function, let $\pr{\eps_u}_{u\in\Z}$ be an i.i.d. real-valued sequence
 and $\pr{\eps'_u}_{u\in\Z}$ an independent copy of $\pr{\eps_u}_{u\in\Z}$. Define 
$U_n\pr{h, \pr{\eps_u}_{u\in\Z}}:=\sum_{1\leq i<j\leq n}  h\pr{ V_i , 
V_j}=\sum_{1\leq i<j\leq n}H_{i,j}$, where 
$V_j= \pr{\varepsilon_u}_{u=j-\ell}^{j+\ell} $ 
and $H_{i,j}:=   h\pr{ V_i,V_j}$.

Then 
\begin{equation}
U_n \pr{h, \pr{\eps_u}_{u\in\Z}}=
\sum_{a,b\in [ 4\ell+2]} U_{\ent{\frac{n}{ 4\ell+2}}}^{\ind}\pr{h_{a,b}   ,
\pr{\eps_i^{a,b}}   }+\sum_{k=1}^6R_{n,k},
\end{equation}
where
\begin{itemize}
\item the function  $h_{a,b}\colon \R^{4\ell+2}\times\R^{4\ell+2}\to \R$ is defined for 
$0\leq a-b\leq  2\ell$ by 
\begin{equation}\label{eq:definition_hab_ab_a_moins_b_petit}
h_{a,b}\pr{\pr{x_i}_{i=1}^{4\ell+2}, \pr{y_j}_{j=1}^{4\ell+2}  } 
:=h \pr{
 \pr{x_{i+a-b}}_{i=1}^{2\ell+1}   ,
 \pr{y_{j} }_{j=1}^{2\ell+1}  }
+ h \pr{
 \pr{y_{j+a-b}}_{j=1}^{2\ell+1}   ,
 \pr{x_{i} }_{i=1}^{2\ell+1}  } 
\end{equation}
and for $\pr{2\ell+1}\leq a-b\leq  \pr{4\ell+2}-1$ by 
\begin{multline}\label{eq:definition_hab_ab_a_moins_b_grand}
h_{a,b}\pr{\pr{x_i}_{i=1}^{4\ell+2}, \pr{y_j}_{j=1}^{4\ell+2}  }  \\
:=h\pr{\pr{x_{i+a-b-\pr{2\ell+1}}}_{i=1}^{2\ell+1}  ,
 \pr{y_{j} }_{j=1}^{2\ell+1}  } + h\pr{
 \pr{y_{j+a-b-\pr{2\ell+1}}}_{j=1}^{2\ell+1}  ,
 \pr{x_{i} }_{i=1}^{2\ell+1}  },
\end{multline}
\item the random vectors $\varepsilon_u^{a,b}$ are defined by 
\begin{equation}\label{eq:definition_eps_ab_a_moins_b_petit}
\varepsilon^{a,b}_u:=\pr{\eps_j}_{j=u\pr{4\ell+2}+b-\ell}^{\pr{u+1}\pr{4\ell+2}+b-\ell-1 }, 
0\leq a-b\leq  2\ell;
\end{equation}
\begin{equation}\label{eq:definition_eps_ab_a_moins_b_grand}
\varepsilon^{a,b}_u:=\pr{\eps_j}_{j=u\pr{4\ell+2}+b+\ell +1}^{\pr{u+1}\pr{4\ell+2}+b  +\ell}, 
\pr{2\ell+1} \leq a-b\leq  \pr{4\ell+2}-1;
\end{equation}
\item the remainder terms are defined (with the convention that $\sum_{
u=1}^{-k}=0$, $k\leq 0$)  by
\begin{equation}\label{eq:definition_de_Rn1}
R_{n,1}=\sum_{1\leq i<j\leq n}
H_{i,j}-\sum_{1\leq i<j\leq  \pr{4\ell+2}\ent{\frac{n}{4\ell+2}}}
H_{i,j};
\end{equation}
\begin{equation}\label{eq:definition_de_Rn2}
R_{n,2}:=\sum_{u=0}^{\ent{\frac{n}{4\ell+2}}-1}
\sum_{\substack{ a,b\in [ 4\ell+2]\\ a<b  }   }H_{u\pr{4\ell+2}+a,u\pr{4\ell+2}+b};
\end{equation}
 \begin{equation}\label{eq:definition_de_Rn3}
R_{n,3}= \sum_{v=1}^{\ent{\frac{n}{4\ell+2}}-1}
\sum_{\substack{ a,b\in [ 4\ell+2]\\ 0\leq a-b\leq \pr{2\ell+1}-1   }   }
\pr{H_{ a,v\pr{4\ell+2}+b}+H_{ b,v\pr{4\ell+2}+a}};
\end{equation}
 \begin{equation}\label{eq:definition_de_Rn4}
R_{n,4}= \sum_{\substack{ a,b\in [ 4\ell+2]\\ \pr{2\ell+1}\leq a-b\leq  \pr{4\ell+2}-1   }   } \sum_{v=1}^{\ent{\frac{n}{4\ell+2}}-1} 
\pr{ H_{ a,\pr{v+1}\pr{4\ell+2}+b}  
+H_{ \pr{4\ell+2}+b,v\pr{4\ell+2}+a}};
\end{equation}
 \begin{equation}\label{eq:definition_de_Rn5}
R_{n,5}= \sum_{\substack{ a,b\in [ 4\ell+2]\\ \pr{2\ell+1}\leq a-b\leq  \pr{4\ell+2}-1   }   } \sum_{ u=0}^{ \ent{\frac{n}{4\ell+2}}-2} 
\pr{H_{u\pr{4\ell+2}+a,\pr{u+1}\pr{4\ell+2}+b}-H_{u\pr{4\ell+2}+a,2m\pr{2\ell+1}+b}};
\end{equation}
 \begin{equation}\label{eq:definition_de_Rn6}
R_{n,6}=  \sum_{\substack{ a,b\in [ 4\ell+2]
\\ \pr{2\ell+1}\leq a-b\leq  \pr{4\ell+2}-1   }   } 
\sum_{v=1}^{\ent{\frac{n}{4\ell+2}}-1} 
\pr{ H_{ b,v\pr{4\ell+2}+a}- H_{v\pr{4\ell+2}+b,v\pr{4\ell+2}+a}}.
\end{equation}

\end{itemize}
\end{Lemma}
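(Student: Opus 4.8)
The identity is a purely combinatorial, pathwise rearrangement of the finite double sum $\sum_{1\le i<j\le n}H_{i,j}$: no centering or expectation enters, so it suffices to check that the two sides, once expanded as linear combinations of the quantities $H_{i,j}$, carry the same coefficient in front of each $H_{i,j}$. First I would set $N:=\ent{n/(4\ell+2)}$ and, directly from \eqref{eq:definition_de_Rn1}, split off the pairs meeting the incomplete terminal block as $R_{n,1}$, leaving $\sum_{1\le i<j\le (4\ell+2)N}H_{i,j}$ to be reorganised. Writing each index $m\in\{1,\dots,(4\ell+2)N\}$ uniquely as $m=(4\ell+2)u+a$ with block $u\in\{0,\dots,N-1\}$ and residue $a\in[4\ell+2]$, I would separate the pairs into those lying in a common block and those straddling two blocks. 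The common-block pairs are exactly those listed in \eqref{eq:definition_de_Rn2}, since $i<j$ inside one block forces $a<b$; they form $R_{n,2}$ and need no further treatment because there the two windows $V_i,V_j$ may overlap.

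The engine of the proof is the pairing identity obtained by direct substitution of \eqref{eq:definition_eps_ab_a_moins_b_petit} into \eqref{eq:definition_hab_ab_a_moins_b_petit}. Each random vector $\eps^{a,b}_u$ is the block of $4\ell+2$ consecutive innovations chosen so that the shifted sub-windows appearing as arguments of $h$ reconstruct genuine windows $V_{\cdot}$; for $0\le a-b\le 2\ell$ and blocks $u<v$ one finds
\begin{equation}
h_{a,b}\pr{\eps^{a,b}_u,\eps^{a,b}_v}=H_{(4\ell+2)u+a,\,(4\ell+2)v+b}+H_{(4\ell+2)u+b,\,(4\ell+2)v+a},
\end{equation}
so that, for an off-diagonal residue pair, the symmetrised kernel $h_{a,b}$ records both ways of assigning the residues $a$ and $b$ to the earlier and later block. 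Summing over $u<v$ and over admissible residue pairs therefore reproduces the straddling part of the double sum. The two cases in the definitions of $h_{a,b}$ and $\eps^{a,b}_u$ serve to guarantee that the two arguments of every $h$ are always separated by at least $2\ell+1$ innovations, so that the data of each $U^{\ind}_N(h_{a,b},(\eps^{a,b}))$ is genuinely i.i.d.: for small offsets $a-b$ consecutive super-blocks already give this separation, whereas for $2\ell+1\le a-b\le 4\ell+1$ the windows would overlap, which is why \eqref{eq:definition_eps_ab_a_moins_b_grand} shifts the extracted block (offset $b+\ell+1$ instead of $b-\ell$) and effectively pairs residue $a$ with residue $b+2\ell+1$ read one block further on.

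It remains to compare, term by term, the straddling $H_{i,j}$ produced by $\sum_{a,b\in[4\ell+2]}U^{\ind}_N(h_{a,b},(\eps^{a,b}))$ with those genuinely present in $\sum_{1\le i<j\le(4\ell+2)N}H_{i,j}$, and to identify the mismatch with $R_{n,3}+\dots+R_{n,6}$. Three kinds of discrepancy arise: the diagonal and first-block boundary terms, where the symmetrised kernel together with the full range $u<v$ does not match the original sum (this produces $R_{n,3}$, see \eqref{eq:definition_de_Rn3}); and, in the large-offset range, the effect of the one-block shift, which sends some target indices into block $v+1$ and so replaces intended pairs by shifted ones. The differences between a shifted term $H_{u(4\ell+2)+a,(u+1)(4\ell+2)+b}$ and its un-shifted counterpart, together with the leftover first- and last-block boundary pairs created by the shift, are collected in $R_{n,4}$, $R_{n,5}$ and $R_{n,6}$; I would verify each by matching its summand against the missing or surplus $H_{i,j}$ and checking that the index ranges coincide with \eqref{eq:definition_de_Rn4}--\eqref{eq:definition_de_Rn6}.

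The main obstacle is precisely this last, book-keeping step. The delicate point is the large-offset case, where a single block shift is used simultaneously to cure the overlap of windows and to realise an independent pairing; this shift creates several off-by-one-block boundary contributions at once, and one must check with care that, after adding all six remainders, every $H_{i,j}$ with $1\le i<j\le n$ occurs with coefficient exactly one and no spurious term survives. All the remaining verifications — that each $\eps^{a,b}_u$ is a disjoint window of $4\ell+2$ innovations, hence that the sequence $(\eps^{a,b}_u)_u$ is i.i.d., and that the substitution giving the pairing identity is correct — are routine once the indexing conventions are fixed.
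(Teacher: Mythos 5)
Your outline reproduces the paper's own strategy step for step: $R_{n,1}$ removes the incomplete terminal block, the block/residue decomposition isolates the same-block pairs as $R_{n,2}$, the pairing identity you state for $0\leq a-b\leq 2\ell$ is exactly the one the paper uses, with $R_{n,3}$ absorbing the block-zero boundary terms, and your explanation of why the two regimes of $\pr{a,b}$ need different definitions of $h_{a,b}$ and $\eps^{a,b}_u$ (separation of the windows, hence genuinely i.i.d.\ data) is also the paper's. Up to that point the proposal is sound and matches the intended proof.

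The gap is in the large-offset regime $2\ell+1\leq a-b\leq 4\ell+1$, precisely the step you defer (``I would verify each by matching \dots'') and yourself call the main obstacle: that step \emph{is} the lemma, and your sketch is missing the single device that makes it close. Writing $m:=\ent{n/\pr{4\ell+2}}$, the paper does not match surplus and missing terms one by one; it adds and subtracts, for every cross-block pair, the full-block-shifted terms $H_{u\pr{4\ell+2}+a,\pr{v+1}\pr{4\ell+2}+b}$ and $H_{\pr{u+1}\pr{4\ell+2}+b,v\pr{4\ell+2}+a}$, and then \emph{telescopes} the resulting differences in the inner index: for fixed $u$,
\begin{equation*}
\sum_{v=u+1}^{m-1}\pr{H_{u\pr{4\ell+2}+a,v\pr{4\ell+2}+b}-H_{u\pr{4\ell+2}+a,\pr{v+1}\pr{4\ell+2}+b}}
=H_{u\pr{4\ell+2}+a,\pr{u+1}\pr{4\ell+2}+b}-H_{u\pr{4\ell+2}+a,m\pr{4\ell+2}+b},
\end{equation*}
which is exactly the summand of \eqref{eq:definition_de_Rn5} (note $2m\pr{2\ell+1}=m\pr{4\ell+2}$), and the symmetric telescoping in $u$ for fixed $v$ yields exactly \eqref{eq:definition_de_Rn6}. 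This cancellation is not cosmetic: the differences between shifted and unshifted pairs that you propose to ``collect'' are of order $m^2$ in number, whereas $R_{n,5}$ and $R_{n,6}$ each contain only of order $m$ terms, so the term-matching you postpone cannot terminate in the stated remainders unless the telescoping is identified. Once it is, the shifted double sum is recognized — exactly as you observe, as the small-offset situation with $b$ replaced by $b+2\ell+1$, which is what \eqref{eq:definition_hab_ab_a_moins_b_grand} and \eqref{eq:definition_eps_ab_a_moins_b_grand} encode — as $U^{\ind}_{m-1}\pr{h_{a,b},\pr{\eps^{a,b}_u}}$ up to the block-zero boundary terms $R_{n,4}$; that final identification is the one part of your large-offset discussion that is correct as stated.
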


\begin{proof}
Before going into the details of the proof, let us explain the general idea, which 
will also explain the origin of the remainder terms. First, it turns out that it would 
be more convenient that $n$ is a multiple of $4\ell+2$. However, it has 
no reason to be the case and we should also take into account the difference 
between $U_n$ and $U_{n'}$ where $n'$ is a multiple of $4\ell+2$ close to 
$n$. The control of the difference of these two terms is precisely $R_{n,1}$.
Now that we are reduced to the case where $n$ is a multiple of $4\ell+2$, 
we look that the terms $H_{i,j}$ and the remainder of $i$ and $j$ by the 
Euclidian division by $4\ell+2$. If these remainder are closed enough, 
then we can write the corresponding sum as a U-statistic whose data 
are independent vectors of length $4\ell+2$. If the remainders are too 
far way, we have to add and substract a term to be reduced to the 
previous case, and this leads to the definition of $R_{n,k}$, $3\leq k\leq 6$. 
If the quotient in the Euclidian division by $4\ell+1$ of $i$ and $j$ are the same, then
the corresponding sum is $R_{n,2}$.

From the equality $U_n\pr{h_2,f,\pr{\eps_i}_{i\in\Z}}=\sum_{1\leq i<j\leq n}
H_{i,j}$ we get by \eqref{eq:definition_de_Rn1} that 
\begin{equation}
U_n\pr{h_2,\pr{\eps_i}_{i\in\Z}}
=\sum_{1\leq i<j\leq  \pr{4\ell+2}\ent{\frac{n}{4\ell+2}}}
H_{i,j}+R_{n,1}.
\end{equation}
For simplicity, let us denote by $I$ the set $[ 4\ell+2]$ and $m:= 
\ent{\frac{n}{4\ell+2}}$. With these notations and in view of 
\eqref{eq:definition_de_Rn2}, the following equality takes place 
\begin{multline}\label{eq:demo_dec_Hoeffding_ell_dep_etape 1}
\sum_{1\leq i<j\leq  \pr{4\ell+2}\ent{\frac{n}{4\ell+2}}}
H_{i,j}\\ 
=\sum_{0\leq u<v\leq m-1}
\sum_{\substack{ a,b\in I\\ 0\leq a-b\leq \pr{2\ell+1}-1   }   }
\pr{H_{u\pr{4\ell+2}+a,v\pr{4\ell+2}+b}+H_{u\pr{4\ell+2}+b,v\pr{4\ell+2}+a}}\\
+\sum_{0\leq u<v\leq m-1}
\sum_{\substack{ a,b\in I\\ \pr{2\ell+1}\leq a-b\leq  \pr{4\ell+2}-1   }   }
\pr{H_{u\pr{4\ell+2}+a,v\pr{4\ell+2}+b}+H_{u\pr{4\ell+2}+b,v\pr{4\ell+2}+a}}
+R_{n,2}.
\end{multline}
Let us treat the first term. For $a,b\in I$ such that $0\leq a-b\leq 2\ell$, 
in view of the definitions \eqref{eq:definition_hab_ab_a_moins_b_petit} 
and \eqref{eq:definition_eps_ab_a_moins_b_petit}  and the symmetry of $h_2$, 
the following equaliy holds
\begin{equation}
h_{a,b}\pr{\eps_u^{a,b},\eps_v^{a,b}}
=H_{u\pr{4\ell+2}+a,v\pr{4\ell+2}+b}+H_{u\pr{4\ell+2}+b,2v\pr{4\ell+2}+a}
\end{equation}
hence 
\begin{multline}\label{eq:demo_dec_Hoeffding_ell_dep_etape2}
\sum_{0\leq u<v\leq m-1}
\sum_{\substack{ a,b\in I\\ 0\leq a-b\leq \pr{2\ell+1}-1   }   }
\pr{H_{u\pr{4\ell+2}+a,v\pr{4\ell+2}+b}+H_{u\pr{4\ell+2}+b,v\pr{4\ell+2}+a}}\\ 
= 
U_{m}^{\ind}\pr{h_{a,b},\pr{\varepsilon_{u}^{a,b}}_{u\in \Z}   }+R_{n,3}.
\end{multline}
Let us treat the second term in the right hand side of 
\eqref{eq:demo_dec_Hoeffding_ell_dep_etape 1}. 
Adding and stubstracting the terms 
$ H_{u\pr{4\ell+2}+a,\pr{v+1}\pr{4\ell+2}+b}$ and $H_{\pr{u+1}\pr{4\ell+2}+b,v\pr{4\ell+2}+a}$ gives, after having rewriten the corresponding sums 
as double sums and exploited a telescoping of the inside sum, 

\begin{equation}
\sum_{0\leq u<v\leq m-1}
\pr{H_{u\pr{4\ell+2}+a,v\pr{4\ell+2}+b}+H_{u\pr{4\ell+2}+b,v\pr{4\ell+2}+a}}\\
= S+R_{n,5}+R_{n,6} ,
\end{equation}

where 
\begin{equation}
S:=  \sum_{0\leq u<v\leq m-1}
\pr{ H_{u\pr{4\ell+2}+a,\pr{v+1}\pr{4\ell+2}+b}  
+H_{2
\pr{u+1}\pr{2\ell+1}+b,v\pr{4\ell+2}+a}} 
\end{equation}
We express $S$ as a U-statistic of independent data. 
Noticing that $0\leq a-\pr{b+\pr{2\ell+1}}\leq \pr{2\ell+1}-1$, we are in a similar 
situation as in the case $0\leq a-b\leq \pr{2\ell+1}-1$, with $b$ replaced 
by $b+\pr{2\ell+1}$. Therefore, in view of 
\eqref{eq:definition_eps_ab_a_moins_b_grand}, 
\eqref{eq:definition_hab_ab_a_moins_b_grand} and \eqref{eq:definition_de_Rn4},
we obtain that 
\begin{equation}
S=U_{m-1}^{\ind}\pr{ 
h_{a,b},\pr{\eps_u^{a,b}}_{u\in\Z}}+R_{n,4}.
\end{equation}

Collecting these terms gives 
\begin{multline}\label{eq:demo_dec_Hoeffding_ell_dep_etape3}
\sum_{\substack{ a,b\in I\\ \pr{2\ell+1}\leq a-b\leq  \pr{4\ell+2}-1   }   }\sum_{0\leq u<v\leq m-1}
\pr{H_{u\pr{4\ell+2}+a,v\pr{4\ell+2}+b}+H_{u\pr{4\ell+2}+b,v\pr{4\ell+2}+a}}\\
=\sum_{\substack{ a,b\in I\\ \pr{2\ell+1}\leq a-b\leq  \pr{4\ell+2}-1   }   } U_{m-1}^{\ind}\pr{ 
h_{a,b},\pr{\eps_u^{a,b}}_{u\in\Z}}+ R_{n,4}+ R_{n,5}+ R_{n,6}.
\end{multline}
We end the proof of Lemma~\ref{lem:Hoeffding_decomposition_ell_dependent} by combining the equalities \eqref{eq:demo_dec_Hoeffding_ell_dep_etape 1}, 
\eqref{eq:demo_dec_Hoeffding_ell_dep_etape2} and 
\eqref{eq:demo_dec_Hoeffding_ell_dep_etape3}.
\end{proof}

Now, we can apply the Hoeffding decomposition to each U-statistic of i.i.d. data 
and rewrite the remainder term in a more tractable form.
\begin{Lemma}\label{lem:Hoeffding_dec_fct_ell_dep}
Let $\ell \geq 1$ be an  integer,  $h\colon\R^{2\ell+1}
\times\R^{2\ell+1}\to \R$
 be a  measurable function, let $\pr{\eps_u}_{u\in\Z}$ be an i.i.d. real-valued sequence
 and $\pr{\eps'_u}_{u\in\Z}$ an independent copy of $\pr{\eps_u}_{u\in\Z}$. Define 
$U_n\pr{h, \pr{\eps_u}_{u\in\Z}}:=\sum_{1\leq i<j\leq n}  h\pr{ V_i , 
V_j}-\E{h\pr{V_i,V_j}}=\sum_{1\leq i<j\leq n}H_{i,j}$, where 
$V_j= \pr{\varepsilon_u}_{u=j-\ell}^{j+\ell} $,  $V'_j=
 \pr{\varepsilon'_u}_{u=j-\ell}^{j+\ell} $
and $H_{i,j}:=   h\pr{ V_i,V_j}-\E{h\pr{V_i,V_j}}$.

Then 
\begin{multline}
U_n \pr{h, \pr{\eps_u}_{u\in\Z}}=
\pr{4\ell+2}\ent{\frac{n}{ 4\ell+2}}\sum_{k=1}^{  \pr{4\ell+2}\pr{\ent{\frac{n}{ 4\ell+2}}} +1 }
\pr{\E{h\pr{V_k,V'_0}\mid V_k}-\E{h\pr{V_k,V'_0} }}\\
+
\sum_{a,b\in [ 4\ell+2]} U_{\ent{\frac{n}{ 4\ell+2}}}^{\ind}\pr{h_{a,b}^{\pr{2}}   ,
\pr{\eps_i^{a,b}}   }+\sum_{k=1}^6R_{n,k},
\end{multline}
where the function  $h_{a,b}\colon \R^{  4\ell+2}\times\R^{  4\ell+2}\to \R$ is defined
\begin{multline}
h_{a,b}^{\pr{2}}
:= h_{a,b}\pr{\pr{x_i}_{i=1}^{4\ell+2}, \pr{y_j}_{j=1}^{4\ell+2}  }
\\ - h_{a,b} 
 \pr{ \pr{x_i}_{i=1}^{4\ell+2}, \eps_0^{a,b}  } 
 -h_{a,b}
 \pr{ \pr{y_j}_{j=1}^{4\ell+2} , \eps_0^{a,b}  } 
 +\E{h_{a,b}\pr{\eps_0^{a,b},{\eps_0'}^{a,b}  }}
\end{multline}
and 
\begin{itemize}
\item the function  $h_{a,b}\colon \R^{4\ell+2}\times\R^{4\ell+2}\to \R$ is defined for 
$0\leq a-b\leq  2\ell$ by

\begin{equation}\label{eq:definition_hab_ab_a_moins_b_petit_Hoeffding}
h_{a,b}\pr{\pr{x_i}_{i=1}^{4\ell+2}, \pr{y_j}_{j=1}^{4\ell+2}  }  
:=h \pr{
 \pr{x_{i+a-b}}_{i=1}^{2\ell+1}   ,
 \pr{y_{j} }_{j=1}^{2\ell+1}  }
+ h \pr{
 \pr{y_{j+a-b}}_{j=1}^{2\ell+1}   ,
 \pr{x_{i} }_{i=1}^{2\ell+1}  }
\end{equation}
and for $\pr{2\ell+1}\leq a-b\leq  \pr{4\ell+2}-1$ by 
\begin{multline}\label{eq:definition_hab_ab_a_moins_b_grand_Hoeffding}
h_{a,b}\pr{\pr{x_i}_{i=1}^{4\ell+2}, \pr{y_j}_{j=1}^{4\ell+2}  }  \\
:=h\pr{\pr{x_{i+a-b-\pr{2\ell+1}}}_{i=1}^{2\ell+1}  ,
 \pr{y_{j} }_{j=1}^{2\ell+1}  } + h\pr{
 \pr{y_{j+a-b-\pr{2\ell+1}}}_{j=1}^{2\ell+1}  ,
 \pr{x_{i} }_{i=1}^{2\ell+1}  },
\end{multline}
\item the random vectors $\varepsilon_u^{a,b}$ are defined by 
\begin{equation} 
\varepsilon^{a,b}_u:=\pr{\eps_j}_{j=u\pr{4\ell+2}+b-\ell}^{\pr{u+1}\pr{4\ell+2}+b-\ell-1 }, 
0\leq a-b\leq  2\ell;
\end{equation}
\begin{equation} 
\varepsilon^{a,b}_u:=\pr{\eps_j}_{j=u\pr{4\ell+2}+b+\ell +1}^{\pr{u+1}\pr{4\ell+2}+b  +\ell}, 
\pr{2\ell+1} \leq a-b\leq  \pr{4\ell+2}-1;
\end{equation}
\item the remainder terms are defined (with the convention that $\sum_{
u=1}^{-k}=0$, $k\leq 0$)  by
\begin{equation}
R_{n,1,1}:= \sum_{j=\pr{4\ell+2}\ent{\frac{n}{4\ell+2}   }+1     }^n 
\sum_{i=\pr{4\ell+2}\ent{\frac{j-1}{4\ell+2}   }+1 }^{j-1}H_{i,j}
\end{equation}
\begin{equation}
R_{n,1,2}:= \sum_{a=1}^{4\ell+2}\sum_{j=\pr{4\ell+2}\ent{\frac{n}{4\ell+2}   }+1     }^n 
\sum_{k=0}^{\ent{\frac{j-1}{4\ell+2}   }-1}H_{\pr{4\ell+2}k+a,j};
\end{equation}
\begin{equation}
R_{n,2}:=\sum_{u=0}^{\ent{\frac{n}{4\ell+2}}-1}
\sum_{\substack{ a,b\in [ 4\ell+2]\\ a<b  }   }H_{u\pr{4\ell+2}+a,u\pr{4\ell+2}+b}
\end{equation}
 \begin{equation}
R_{n,3}:= \sum_{v=1}^{\ent{\frac{n}{4\ell+2}}-1}
\sum_{\substack{ a,b\in [ 4\ell+2]\\ 0\leq a-b\leq \pr{2\ell+1}-1   }   }
\pr{H_{ a,v\pr{4\ell+2}+b}+H_{ b,v\pr{4\ell+2}+a}}
\end{equation}
 \begin{equation}
R_{n,4}= \sum_{\substack{ a,b\in [ 4\ell+2]\\ \pr{2\ell+1}\leq a-b\leq  \pr{4\ell+2}-1   }   } \sum_{v=1}^{\ent{\frac{n}{4\ell+2}}-1} 
\pr{ H_{ a,\pr{v+1}\pr{4\ell+2}+b}  
+H_{ \pr{4\ell+2}+b,v\pr{4\ell+2}+a}}
\end{equation}
 \begin{equation}
R_{n,5}= \sum_{\substack{ a,b\in [ 4\ell+2]\\ \pr{2\ell+1}\leq a-b\leq  \pr{4\ell+2}-1   }   } \sum_{ u=0}^{ \ent{\frac{n}{4\ell+2}}-2} 
\pr{H_{u\pr{4\ell+2}+a,\pr{u+1}\pr{4\ell+2}+b}-H_{u\pr{4\ell+2}+a,2m\pr{2\ell+1}+b}}
\end{equation}
 \begin{equation}
R_{n,6}=  \sum_{\substack{ a,b\in [ 4\ell+2]
\\ \pr{2\ell+1}\leq a-b\leq  \pr{4\ell+2}-1   }   } 
\sum_{v=1}^{\ent{\frac{n}{4\ell+2}}-1} 
\pr{ H_{ b,v\pr{4\ell+2}+a}- H_{v\pr{4\ell+2}+b,v\pr{4\ell+2}+a}},
\end{equation}

\end{itemize}
\end{Lemma}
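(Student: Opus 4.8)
The plan is to combine Lemma~\ref{lem:Hoeffding_decomposition_ell_dependent} with the classical Hoeffding decomposition \eqref{eq:Hoeffding_iid} applied blockwise, and then to center. First I apply Lemma~\ref{lem:Hoeffding_decomposition_ell_dependent} to the (uncentered) symmetric kernel $h$, which gives the exact identity
\[
\sum_{1\leq i<j\leq n}h\pr{V_i,V_j}=\sum_{a,b\in[4\ell+2]}U^{\ind}_{m}\pr{h_{a,b},\pr{\eps^{a,b}_i}}+\sum_{k=1}^6R^\circ_{n,k},
\]
with $m=\ent{\frac{n}{4\ell+2}}$, where $R^\circ_{n,k}$ denotes the remainders of that lemma formed with the uncentered terms $h\pr{V_i,V_j}$. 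Since for $u\neq v$ the blocks $\eps^{a,b}_u$ and $\eps^{a,b}_v$ are built from pairwise disjoint coordinates of $\pr{\eps_u}_{u\in\Z}$, the two arguments of $h_{a,b}\pr{\eps^{a,b}_u,\eps^{a,b}_v}$ are independent, so each $U^{\ind}_m\pr{h_{a,b},\cdot}$ is a genuine U-statistic of i.i.d.\ data.

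Next I apply the decomposition \eqref{eq:Hoeffding_iid} to every $U^{\ind}_m\pr{h_{a,b},\pr{\eps^{a,b}_i}}$. Each splits into a constant part $\binom m2\theta_{a,b}$, a linear part, and a degenerate part $U^{\ind}_m\pr{h^{(2)}_{a,b},\cdot}$, where $h^{(2)}_{a,b}$ is precisely the degree-two Hoeffding projection recorded in the statement. The whole chain of equalities is an identity in $\mathbb L^1$, so I may take expectations: the linear and degenerate parts are centered, whence $\E{\sum_{i<j}h\pr{V_i,V_j}}=\sum_{a,b}\binom m2\theta_{a,b}+\sum_k\E{R^\circ_{n,k}}$. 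Subtracting this from the identity, the constant parts cancel, the linear and degenerate parts are unchanged, and each $R^\circ_{n,k}$ is replaced by its centered version $R^\circ_{n,k}-\E{R^\circ_{n,k}}$, which is exactly the remainder $R_{n,k}$ of the present statement (its terms being the centered $H_{i,j}=h\pr{V_i,V_j}-\E{h\pr{V_i,V_j}}$). In particular the degenerate U-statistics survive as $\sum_{a,b}U^{\ind}_m\pr{h^{(2)}_{a,b},\cdot}$.

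The heart of the argument is to reassemble the linear parts into the single stationary sum. By \eqref{eq:definition_hab_ab_a_moins_b_petit}--\eqref{eq:definition_hab_ab_a_moins_b_grand}, each term $h_{a,b}\pr{\eps^{a,b}_u,\eps^{a,b}_v}$ equals $H_{\alpha,\beta}+H_{\alpha',\beta'}$ for explicit original indices with $\alpha,\alpha'$ in block $u$ and $\beta,\beta'$ in block $v$; taking the conditional expectation given the first block integrates out the independent second one, so by stationarity the one-dimensional Hoeffding projection of $h_{a,b}$ evaluated at $\eps^{a,b}_u$ equals the sum, over the retained indices $\alpha$, of $\E{h\pr{V_\alpha,V'_0}\mid V_\alpha}-\E{h\pr{V_\alpha,V'_0}}$. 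It then remains to reorganize the triple sum over $(a,b,u)$ into a single sum over the original retained index $k$: each $k\in\ens{1,\dots,(4\ell+2)m+1}$ occurs as a retained index for a fixed number of pairs $(a,b)$, and combining that multiplicity with the linear coefficient of each $U^{\ind}_m$ produces the prefactor $(4\ell+2)\ent{\frac{n}{4\ell+2}}$ and the stated range of summation. I expect this re-indexing, together with the reconciliation of the $U_m$ versus $U_{m-1}$ discrepancy arising in the regime $\pr{2\ell+1}\leq a-b\leq \pr{4\ell+2}-1$ (which is what forces the extra index $k=(4\ell+2)m+1$), to be the main bookkeeping obstacle.

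Finally I split the boundary remainder. By its definition in Lemma~\ref{lem:Hoeffding_decomposition_ell_dependent}, $R_{n,1}=\sum_{j=(4\ell+2)m+1}^{n}\sum_{i=1}^{j-1}H_{i,j}$; separating, for each such $j$, the indices $i$ lying in the complete blocks strictly below $j$ from those in the last incomplete block gives exactly $R_{n,1}=R_{n,1,1}+R_{n,1,2}$ with the two families written in the statement. Collecting the stationary linear term, the degenerate U-statistics $\sum_{a,b}U^{\ind}_m\pr{h^{(2)}_{a,b},\cdot}$, the split $R_{n,1,1}+R_{n,1,2}$ and the remaining $\sum_{k=2}^6R_{n,k}$ then yields the announced equality.
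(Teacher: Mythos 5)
Your proposal is correct and is essentially the paper's own argument: the paper establishes this lemma by applying Lemma~\ref{lem:Hoeffding_decomposition_ell_dependent} and then the classical Hoeffding decomposition \eqref{eq:Hoeffding_iid} to each independent-data U-statistic $U^{\ind}_{\ent{n/(4\ell+2)}}\pr{h_{a,b},\pr{\eps^{a,b}_i}}$, recombining the first-order projections (via conditioning on one block and stationarity) into the stationary partial-sum term, and splitting the boundary remainder $R_{n,1}$ into $R_{n,1,1}+R_{n,1,2}$, exactly the steps you carry out. Your centering argument (subtracting the expectation of the uncentered identity, which by linearity turns each $R^{\circ}_{n,k}$ into the centered $R_{n,k}$ and leaves the linear and degenerate parts untouched) and the block-disjointness observation match the intended proof, and the bookkeeping you flag --- the multiplicity count giving the prefactor $(4\ell+2)\ent{n/(4\ell+2)}$ and the $U_m$ versus $U_{m-1}$ discrepancy --- is precisely what the paper also leaves implicit.
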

To sum up, the centered U-statistic whose data is a function of $2\ell+1$ i.i.d. 
random variables can be decomposed as a partial sum of a strictly stationary sequence, 
a sum of degenerated U-statistics plus a remainder term.

\begin{proof}[Proof of Proposition~\ref{prop:Hoeffding_decomposition}]
By the assumption \eqref{eq:convergence_presque_sure_pour_decomposition},
the following equality holds almost surely:
\begin{equation}
U_n\pr{h,f,\pr{\eps_u}_{u\in\Z}}
=U_n\pr{h,f_0,\pr{\eps_u}_{u\in\Z}}+\sum_{\ell=1}^{+\infty}\pr{U_n
\pr{h,f_\ell,\pr{\eps_u}_{u\in\Z}}
-U_n
\pr{h,f_{\ell-1},\pr{\eps_u}_{u\in\Z}}}.
\end{equation}
Since $U_n\pr{h,f_0,\pr{\eps_u}_{u\in\Z}}$ is a U-statistic of 
independent identically distributed
 data, it can be treated by the classical Hoeffding's decomposition
written in \eqref{eq:Hoeffding_iid}.

To proceed, we need to decompose for a fixed $\ell\geq 1$ the 
term $U_n
\pr{h,f_\ell,\pr{\eps_u}_{u\in\Z}}
-U_n
\pr{h,f_{\ell-1},\pr{\eps_u}_{u\in\Z}}$. To this aim, we
 apply Lemma~\ref{lem:Hoeffding_dec_fct_ell_dep}
in the following setting:
 the function $h$ is replaced by $\widetilde{h}\colon 
\R^{2\ell+1}\times\R^{2\ell+1}\to \R $, which is defined by
\begin{multline}
\widetilde{h}\pr{ 
\pr{x_i}_{i=1}^{2\ell+1},\pr{y_j}_{j=1}^{2\ell+1}
}\\
=h\pr{f_\ell\pr{\pr{x_i}_{i=1}^{2\ell+1}   }  ,    
f_\ell\pr{\pr{y_j}_{j=1}^{2\ell+1}   }
 }-
 h\pr{f_{\ell-1}\pr{\pr{x_i}_{i=2}^{2\ell}   }  ,    
f_{\ell-1}\pr{\pr{y_j}_{j=2}^{2\ell}   }
 }.
\end{multline}
 
In this way, 
\begin{equation}
U_n
\pr{h,f_\ell,\pr{\eps_u}_{u\in\Z}}
-U_n
\pr{h,f_{\ell-1},\pr{\eps_u}_{u\in\Z}}= 
\sum_{1\leq i<j\leq n}\widetilde{h}
\pr{\pr{\eps_{i-u}}_{u=-\ell}^\ell,
\pr{\eps_{j-v}}_{v=-\ell}^\ell   } 
\end{equation}
 
and with the notation $V_{i,\ell}=\pr{\eps_{u}}_{u=i-\ell}^{i+\ell}$, 
   Lemma~\ref{lem:Hoeffding_decomposition_ell_dependent} gives 
\begin{multline}\label{eq:utilisation_lemme_Hoeffding}
U_n
\pr{h,f_\ell,\pr{\eps_u}_{u\in\Z}}
-U_n
\pr{h,f_{\ell-1},\pr{\eps_u}_{u\in\Z}}
-\E{U_n
\pr{h,f_\ell,\pr{\eps_u}_{u\in\Z}}
-U_n
\pr{h,f_{\ell-1},\pr{\eps_u}_{u\in\Z}}}\\ 
=
\pr{4\ell+2}\ent{\frac{n}{4\ell+2}} \sum_{k=1}^{\pr{4\ell+2}\ent{\frac{n}{4\ell+2}}+1 }\pr{\E{\til{h}\pr{  V_{k,\ell},V'_{0,\ell} } 
\mid V_{k,\ell}}-\E{\til{h}\pr{  V_{k,\ell},V'_{0,\ell} } 
\mid V_{k,\ell}}}
\\
+\sum_{a,b\in [4\ell+2]} U_{\ent{\frac{n}{4\ell+2}}   }^{\ind}\pr{
\til{h}^{\pr{2}}_{a,b}   ,
\pr{ \eps_i^{a,b}}   }+\sum_{k=1}^6R_{n,k},
\end{multline}

Observe that 
\begin{equation}
\E{\til{h}\pr{  V_{k,\ell},V'_{0,\ell} } 
\mid V_{k,\ell}}= \E{h\pr{f_\ell\pr{V_{k,\ell}},f_{\ell}\pr{  V'_{0,\ell}  }} \mid V_{k,\ell}  }
-\E{h\pr{f_{\ell-1}\pr{V_{k,\ell-1}},f_{\ell-1}\pr{  V'_{0,\ell-1}  } \mid V_{k,\ell}  }}
\end{equation}
and using Lemma~\ref{lem:propriete_esperance_cond_2_tribus}, 
this equality becomes 
\begin{equation}
\E{\til{h}\pr{  V_{k,\ell},V'_{0,\ell} } 
\mid V_{k,\ell}}= \E{h\pr{f_\ell\pr{V_{k,\ell}},f_{\ell}\pr{  V'_{0,\ell}  }} \mid V_{k,\ell}  }
-\E{h\pr{f_{\ell-1}\pr{V_{k,\ell-1}},f_{\ell-1}\pr{  V'_{0,\ell-1}  }} \mid V_{k,\ell-1}  }
\end{equation}

Moreover, 
\begin{equation}
U_{\ent{\frac{n}{4\ell+2}}   }^{\ind}\pr{
\til{h}^{\pr{2}}_{a,b}   ,
\pr{ \eps_i^{a,b}}   }
= \sum_{1\leq u<v\leq \ent{\frac{n}{4\ell+2}}}\pr{
Y_{\ell,u,v}^{a,b}+Y_{\ell,u,v}^{b,a} -Y_{\ell-1,u,v}^{a,b} -Y_{\ell-1,u,v}^{b,a} }
\end{equation}

where  
\begin{multline}
Y_{\ell,u,v}^{a,b}= h\pr{f_{\ell}\pr{V_{\pr{4\ell+2}u+a,\ell}}, f_{\ell}\pr{V_{\pr{4\ell+2}v+b,\ell}}  } - \E{h\pr{f_{\ell}\pr{V_{\pr{4\ell+2}u+a,\ell}}, f_{\ell}\pr{V'_{0,\ell}}  }\mid V_{\pr{4\ell+2}u+a,\ell}}\\-
\E{h\pr{f_{\ell}\pr{V_{\pr{4\ell+2}v+b,\ell}}, f_{\ell}\pr{V'_{0,\ell}}  }\mid V_{\pr{4\ell+2}v+b,\ell}}+\E{h\pr{f_\ell\pr{V_{0,\ell}},f_\ell\pr{V'_{0,\ell}}}}.
\end{multline}

We conclude by collecting all the terms.
\end{proof}  
  
\subsection{Proof of the results of Subsection~\ref{subsec:LLN}}

\begin{proof}[Proof of Proposition~\ref{prop:MZSSLNiid}]
It will be more convenient to work with dyadics, since the 
martingale property will be useful to handle the maximums.
First observe that $M_p\leq 2^{1/p} M'_p$, where 
\begin{equation}
M'_p:=\sup_{N\geq 1} 2^{-2N/p}\max_{2\leq n\leq 2^N} 
\abs{U_n^{\ind}\pr{h,\pr{\eps_i}_{i\in \Z}}-\E{U_n^{\ind}\pr{h,\pr{\eps_i}_{i\in \Z}}}
}.
\end{equation}

For a fixed 
integer $n$, consider the event 
\begin{equation}
A_N:=\ens{2^{-2N/p}\max_{2\leq n\leq 2^N}
\abs{U_n^{\ind}\pr{h,\pr{\eps_i}_{i\in \Z}} 
}>2   }.
\end{equation}

It suffices to prove that there exists a constant $c_p$ (depending only on $p$) such that 
\begin{equation}\label{eq:equation_cle_LGN_cas_iid}
\sum_{N=1}^{+\infty}\PP\pr{A_N}\leq c_p\E{\abs{h\pr{\eps_0,\eps_1}}^p  }.
\end{equation}

Indeed, item~\ref{itm:law_of_large_numbers_iid} follows from an application of 
\eqref{eq:equation_cle_LGN_cas_iid} to $h/\varepsilon$ for a positive $\eps$ and the 
Borel-Cantelli lemma. In order to prove item~\ref{itm:contrl_Mp_iid_cor}, we 
notice that $\PP\ens{M'_p>2}\leq \sum_{N=1}^{+\infty}\PP\pr{A_N}$ and we apply 
\eqref{eq:equation_cle_LGN_cas_iid} to $h/x$ for each positive $x$.
Consequently, we focus on establishing a satisfactory bound for $\PP\pr{A_N}$.

Define for $j\geq 2$ the random variable $D_j:=\sum_{i=1}^{j-1}h\pr{\eps_i,\eps_j}$. 
Let $\mathcal F_j$ denote the $\sigma$-algebra generated by the random variables 
$\eps_k,1\leq k\leq j$. Define 
\begin{equation}
D'_j:= D_j\mathbf 1\ens{\abs{D_j}\leq 2^{2N/p}}- 
\E{D_j\mathbf 1\ens{\abs{D_j}\leq 2^{2N/p}}\mid \mathcal F_{j-1}}\mbox{ and }
\end{equation}
\begin{equation}
D''_j:= D_j\mathbf 1\ens{\abs{D_j}> 2^{2N/p}}- 
\E{D_j\mathbf 1\ens{\abs{D_j}> 2^{2N/p}}\mid \mathcal F_{j-1}}.
\end{equation}
Since $\E{D_j\mid\f_{j-1}}=0$, it follows that $D_j=D'_j+D''_j$ hence  $A_N\subset A'_N\cup A''_N$, where 
\begin{equation}
A'_N:=\ens{2^{-4N/p}\max_{2\leq k\leq 2^N}
\abs{\sum_{j=2}^k D'_j}> 1   }\mbox{ and }
\end{equation}
\begin{equation}
A''_N:=\ens{2^{-4n/p}\max_{2\leq k\leq 2^N}\abs{\sum_{j=2}^k D''_j}> 1   }. 
\end{equation}
Let us bound $p'_N:=\PP\pr{A'_N}$. Markov's inequality entails 
\begin{equation}
p'_N\leq 2^{-4N/p}\E{\max_{2\leq k\leq 2^N}\abs{\sum_{j=2}^k D'_j}^2}
\end{equation}
and since $\pr{D'_j}_{j\geq 2}$ is a martingale 
differences sequence, we obtain by Doob's inequality and orthogonality of increments that 
\begin{equation}\label{eq:estimation_de_p'n}
p'_N\leq 
2^{1-4N/p}\sum_{j=2}^{2^N}\E{{D'_j}^2}\leq
 2^{2-4N/p}\sum_{j=2}^{2^N}\E{D_j^2\mathbf 1\ens{\abs{D_j}\leq 2^{N/p}}} .
\end{equation}
Now, we use 
\begin{align}
\E{{D}_j^2\mathbf 1\ens{\abs{{D}_j}\leq 2^{2N/p}}}&=
2\int_0^{ 2^{2N/p}}t\PP\ens{ t< \abs{{D_j}}  \leq   2^{2N/p}    }
\mathrm dt\\
&=2\int_0^{ 2^{2N/p}}t\PP\ens{ \abs{D_j}  >t   }
\mathrm dt-2^{2N/p}\PP\ens{ \abs{{D_j}}  >2^{2N/p}   }\\
 &\leq 2\int_0^{ 2^{2N/p}}t\PP\ens{\abs{{D_j}} >t   }
\mathrm dt
\end{align}
and after the substitution $s=2^{-2N/p}t$, we get 
\begin{equation}\label{eq:estimation_de_EDj_tronquee}
\E{{D}_j^2\mathbf 1\ens{\abs{{D}_j}\leq 2^{2N/p}}} \leq 2^{1+4N/p}
\int_0^{1}s\PP\ens{\abs{{D_j}} >2^{2N/p}s   }
\mathrm ds.
\end{equation}

We thus obtained the estimate 
\begin{equation}\label{eq:estimation_de_p'n_finale}
p'_N\leq 8\sum_{j=2}^{2^N} \int_0^{1}s\PP\ens{\abs{{D_j}} >2^{2N/p}s   }
\mathrm ds.
\end{equation}
 
In order to bound $p''_N:=\PP\pr{A''_N}$, we start by Markov's inequality to get 
\begin{equation}
 p''_N\leq 2^{-2N/p} \sum_{j=2}^{2^N}\E{\abs{D''_j}}
 \leq 2^{1-2N/p} \sum_{j=2}^{2^N}\E{\abs{D_j}\mathbf 1\ens{\abs{D_j}
 >2^{2N/p}}}.
\end{equation}
Since there exists a constant $C_p$ depending only on $p$ such that 
\begin{multline}
 \E{\abs{D_j}\mathbf 1\ens{\abs{D_j}
 >2^{2N/p}}}=2^{2N/p}\PP\ens{\abs{D_j}>2^{2N/p}}
 +\int_{2^{2N/p}}^{+\infty}\PP\ens{\abs{D_j}>t}\mathrm dt \\ 
 \leq  C_p \int_{2^{2N/p-1}}^{+\infty}\PP\ens{\abs{D_j}>t}\mathrm dt,
\end{multline}
we derive after the substitution $t=2^{2N/p}s$ that 
\begin{equation} \label{eq:estimation_de_p''n}
 p''_N\leq C_p 2 \sum_{j=2}^{2^N}\int_{1/2}^{+\infty}\PP\ens{\abs{D_j}>
 2^{2N/p}s}\mathrm ds.
\end{equation}

The combination of \eqref{eq:estimation_de_p'n_finale} with 
\eqref{eq:estimation_de_p''n} yields 
\begin{equation}\label{eq:borne_de_PAn_avec_Dj}
\PP\pr{A_N}\leq C_p \sum_{j=2}^{2^N}\int_{0}^{+\infty}\min\ens{1,s}  \PP\ens{\abs{D_j}>
 2^{2N/p}s}\mathrm ds.
\end{equation}
We are thus reduced to control the tail of $D_j$, which will be done by  
using Proposition~\ref{prop:inegalite_deviation_suites_iid}. Our particular 
setting permits some simplification of the involved terms.

We first observe that $D_j$ has the same distribution as $\sum_{i=1}^{j-1}
 h\pr{\eps_0,\eps_i}$ (since the vectors $\pr{\eps_1,\dots,\eps_{j-1},
 \eps_j}$ and 
 $\pr{\eps_1,\dots,\eps_{j-1},\eps_0}$ are identically distributed). Define $d_i:=h\pr{\eps_0,\eps_i}$.
 Since $\E{h\pr{\eps_0,x}}=0$ for all $x\in\R^d$, the sequence $\pr{d_i}_{i=1}^d$ is a martingale 
 differences sequence for the filtration $\pr{\mathcal G_i}_{i=1}^n$ where $\mathcal G_i$ is 
 the $\sigma$-algebra generated by $\eps_k$, $0\leq k\leq i$. We apply 
 Proposition~\ref{prop:inegalite_deviation_suites_iid} to $x= 2^{2n/p}s$ for a fixed positive $s$ 
 and $q=2p$. let $i\in\ens{1,\dots,j-1}$. 
  By Lemma~\ref{lem:propriete_esperance_cond_2_tribus} applied to 
  $Y=d_i$, $\mathcal F=\sigma\pr{\eps_0}$ and $\mathcal G=\sigma\pr{\eps_1,\dots,\eps_{i-1}}$, we have 
  \begin{equation}
  \E{\abs{d_i}^p\mid\mathcal G_{i-1}}=\E{\abs{h\pr{\eps_0,\eps_j}}^p \mid\sigma\pr{\eps_0}    }.
\end{equation}   
Using Lemma~\ref{lem:propriete_esperance_cond_fonction_d_iid} with $Y=\eps_j$, $Z=\eps_0$ and 
$f=h$, we derive that 
 \begin{equation}
  \E{\abs{d_i}^p\mid\mathcal G_{i-1}}=\E{\abs{h\pr{\eps_0,\eps_1}}^p \mid\sigma\pr{\eps_0}    }. 
\end{equation}  
Using this equality combined with the fact that the random variables $d_i$, $1\leq i\leq j-1$ have the 
same distribution as $d_1$ , one gets 
\begin{multline}
\PP\ens{\abs{D_j}>
 2^{2N/p}s}\leq c_1\pr{j-1}\int_0^1
 \PP\ens{\abs{d_1}> x 2^{2N/p}usc_2 }u^{q-1}\mathrm du\\
+ c_1 \int_0^1
 \PP\ens{\pr{j-1}^{1/p}\pr{\E{\abs{h\pr{\eps_0,\eps_1}}^p
 \mid\sigma\pr{\eps_0}    }  }^{1/p}    > 
  2^{2N/p}suc_2 }u^{q-1}\mathrm du.
\end{multline}
In view of \eqref{eq:borne_de_PAn_avec_Dj}, we derive that 
\begin{multline}\label{eq:borne_de_PAN}
\PP\pr{A_N}\leq c_12^{2N}\int_0^{+\infty}\int_0^1
 \PP\ens{\abs{d_1}> x 2^{2N/p}suc_2 }u^{2p-1}\mathrm du\min\ens{1,s}\mathrm ds\\+
 c_12^N\int_0^{+\infty}\int_0^1
 \PP\ens{ \pr{\E{\abs{h\pr{\eps_0,\eps_1}}^p \mid\sigma\pr{\eps_0}    }  }^{1/p}    > 
  2^{N/p}suc_2 }u^{2p-1}\mathrm du\min\ens{1,s}\mathrm ds.
\end{multline}
Summing over $N$, 
we get \eqref{eq:equation_cle_LGN_cas_iid} in view of the inequality
\begin{equation}\label{eq:moment_ordre_p_queue}
\sum_{N\geq 1} 2^{2N}\PP\pr{ Y>2^{2N/p}}\leq 
2\E{Y^p}
\end{equation}
for a non-negative random variable $Y$ and the convergence of the integrals $\int_0^1 u^{p-1}\mathrm du$ 
and $\int_0^{
+\infty}\min\ens{1,s}s^{-p}\mathrm ds$. This ends the proof of Proposition~\ref{prop:MZSSLNiid}.
\end{proof}
  
\subsubsection{Treatment of $R_{n,1,1}$ and $R_{n,1,2}$}  

Recall that 
\begin{multline}
H^{\pr{\ell}}_{i,j}:= 
h\pr{f_\ell\pr{V_{i,\ell}},f_\ell\pr{V_{j,\ell}}}
-\E{h\pr{f_\ell\pr{V_{i,\ell}},f_\ell\pr{V_{j,\ell}}}}\\-
\pr{h\pr{f_{\ell-1}\pr{V_{i,\ell-1}},f_{\ell-1}\pr{V_{j,\ell-1}}}
-\E{h\pr{f_{\ell-1}\pr{V_{i,\ell-1}},f_{\ell-1}\pr{V_{j,\ell-1}}}}}.
\end{multline}
 and
\begin{equation}
R_{n,1,1}:= \sum_{\ell\geq 1}
Y_{n,\ell}; \quad Y_{n,\ell}:= 
\sum_{j=\pr{4\ell+2}\ent{\frac{n}{4\ell+2}   }+1     }^n 
\sum_{i=\pr{4\ell+2}\ent{\frac{j-1}{4\ell+2}   }+1 }^{j-1}H^{\pr{\ell}}_{i,j}
\end{equation}
\begin{equation}
R_{n,1,2}:= \sum_{\ell\geq 1}Z_{n,\ell};\quad Z_{n,\ell}:=
\sum_{a=1}^{4\ell+2}\sum_{j=\pr{4\ell+2}\ent{\frac{n}{4\ell+2}   }+1     }^n 
\sum_{k=0}^{\ent{\frac{j-1}{4\ell+2}   }-1}H^{\pr{\ell}}_{\pr{4\ell+2}k+a,j}.
\end{equation}
For a fix $\ell\geq 1$, we evaluate the contribution of of $Y_{n,\ell}$ and 
$Z_{n,\ell}$.

\begin{Lemma}\label{lem:contribution_R1_LGN}
Let $\ell \geq 1$. The following inequalities hold:
\begin{equation}
\norm{\sup_{n\geq 1}\frac{1}{n^{1+1/p}} \abs{Y_{n,\ell} }}_{p,\infty}\leq 
c_p\ell^{1-1/p}\theta_{\ell,p};
\end{equation}
\begin{equation}
\norm{\sup_{n\geq 1}\frac{1}{n^{1+1/p}} \abs{Z_{n,\ell} }}_{p,\infty}\leq 
c_p\ell^{1-1/p}\theta_{\ell,p},
\end{equation}
where $c_p$ depends only on $p$.
\end{Lemma}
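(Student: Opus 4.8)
The plan is to treat $Y_{n,\ell}$ and $Z_{n,\ell}$ separately, in each case reducing the supremum over $n$ to a supremum over the block index $m=\ent{n/(4\ell+2)}$ and exploiting both stationarity and the decay carried by the normalisation $n^{-(1+1/p)}$; throughout I would use that centering together with the definition of $\theta_{\ell,p}$ gives $\norm{H^{\pr{\ell}}_{i,j}}_p\leq 2\theta_{\ell,p}$, and that replacing $\norm{\cdot}_{p,\infty}$ by the equivalent weak seminorm $\norm{\cdot}_{p,w}$ costs only the universal constant $C_p$. For $Y_{n,\ell}$, when $\ent{n/(4\ell+2)}=m$ the indices $i<j$ both lie in the single incomplete block $\ens{(4\ell+2)m+1,\dots,n}$, of length at most $4\ell+1$, so $Y_{n,\ell}$ is a partial-block $U$-statistic with at most $\binom{4\ell+1}{2}$ summands and, by stationarity, the block maximum $W_{\ell,m}:=\max_{r}\abs{\sum_{(4\ell+2)m<i<j\leq(4\ell+2)m+r}H^{\pr{\ell}}_{i,j}}$ has a law independent of $m$. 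For $m\geq 1$ the pointwise bound $\sup_{\ent{n/(4\ell+2)}=m}n^{-(1+1/p)}\abs{Y_{n,\ell}}\leq((4\ell+2)m)^{-(1+1/p)}W_{\ell,m}$ together with the crude count $\norm{W_{\ell,m}}_p\leq c\,\ell^2\theta_{\ell,p}$ already yields the right order, since the factor $\ell^{-(1+1/p)}$ coming from the block's position turns $\ell^2$ into $\ell^{1-1/p}$; summing over $m\geq 1$ through \eqref{eq:norm_lp_faible_supremum} converges because $\sum_m m^{-(p+1)}<\infty$. The origin block $m=0$, where the normalisation is small, I would handle by a dyadic splitting of $\ens{2,\dots,4\ell+1}$: on $[2^k,2^{k+1})$ the count is $c\,2^{2k}\theta_{\ell,p}$, and after division by $2^{k(1+1/p)}$ the resulting geometric series is dominated by its top scale $2^k\approx\ell$, again giving $\ell^{1-1/p}\theta_{\ell,p}$.

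For $Z_{n,\ell}$ the situation is genuinely harder, and this is where I expect the main obstacle. Here $j$ still runs over the last incomplete block but $i$ runs over all $(4\ell+2)m$ indices of the complete blocks, so the number of summands grows linearly in $m$; the crude estimate above then diverges (it produces a harmonic series in $m$). The remedy is a Hoeffding-type splitting of each row around the ``new'' window: setting $\phi_\ell(V_{j,\ell}):=\E{H^{\pr{\ell}}_{i,j}\mid V_{j,\ell}}$, which by independence of disjoint windows is the same for all $i$ with $\abs{i-j}>2\ell$, I would write $\sum_i H^{\pr{\ell}}_{i,j}=(4\ell+2)m\,\phi_\ell(V_{j,\ell})+(\text{conditionally centered fluctuation})$. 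After subtracting the conditional expectation the fluctuation has uncorrelated increments whenever the two windows are more than $2\ell$ apart, so a von Bahr--Esseen / $m$-dependence estimate supplies an extra power of $m$ that makes the corresponding series converge; that part is therefore subdominant.

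It remains to control the projection, which after division by $n^{1+1/p}$ equals $((4\ell+2)m)^{-1/p}$ times the short partial sum $\Phi_{\ell,m}:=\max_r\abs{\sum_{j}\phi_\ell(V_{j,\ell})}$ of the stationary sequence $(\phi_\ell(V_{j,\ell}))_j$, whose law is again independent of $m$. The essential point is that its linear growth in $m$ is exactly cancelled, leaving a normalisation $((4\ell+2)m)^{-1/p}$; but the lossy subadditivity \eqref{eq:norm_lp_faible_supremum} still diverges, so I would instead use the sharper elementary estimate
\begin{equation}
x^p\,\PP\ens{\sup_{m\geq 1}\frac{\Phi_{\ell,m}}{((4\ell+2)m)^{1/p}}>x}\leq x^p\sum_{m\geq 1}\PP\ens{\Phi_{\ell,m}>x((4\ell+2)m)^{1/p}}\leq \frac{1}{4\ell+2}\,\E{\Phi_{\ell,1}^p},
\end{equation}
the last inequality following from $\sum_{m\geq 1}\PP\ens{W>c\,m^{1/p}}\leq c^{-p}\E{W^p}$ for a non-negative $W$ and from stationarity. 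Since $\phi_\ell$ is an $\el^p$-contraction of a term of norm $\leq 2\theta_{\ell,p}$ and $\Phi_{\ell,1}$ is a maximum of partial sums of at most $4\ell+1$ such terms, the triangle inequality gives $\norm{\Phi_{\ell,1}}_p\leq c\,\ell\,\theta_{\ell,p}$; dividing by $(4\ell+2)^{1/p}$ then produces precisely $\ell^{1-1/p}\theta_{\ell,p}$. Collecting the projection and fluctuation bounds and reverting to $\norm{\cdot}_{p,\infty}$ yields both asserted inequalities.
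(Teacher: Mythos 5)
Your argument is correct in substance and follows the same architecture as the paper's proof --- blockwise reduction and crude counting for $Y_{n,\ell}$, and for $Z_{n,\ell}$ the splitting of each row into the projection $\E{H^{\pr{\ell}}_{i,j}\mid V_{j,\ell}}$ plus a conditionally centred fluctuation estimated by a von Bahr--Esseen bound --- but it deviates in two places, and both deviations buy something. First, the paper applies the subadditivity \eqref{eq:norm_lp_faible_supremum} to $Y_{n,\ell}$ only across blocks $N\geq 1$, where the normalisation $\pr{\pr{4\ell+2}N}^{-1-1/p}$ turns the count $\pr{4\ell+2}^{2}$ into $\pr{4\ell+2}^{1-1/p}$; the initial block $n<4\ell+2$ is passed over in silence, and there a term-by-term use of \eqref{eq:norm_lp_faible_supremum} yields only $\ell\,\theta_{\ell,p}$, since $\sum_{n\leq 4\ell+2}n^{p-1}\asymp \ell^{p}$. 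Your dyadic grouping of the initial block (one application of \eqref{eq:norm_lp_faible_supremum} per scale, geometric series dominated by the top scale $2^{k}\asymp\ell$) is precisely what restores $\ell^{1-1/p}\theta_{\ell,p}$, so on this point your proof actually repairs a gap in the paper's. Second, for the projection part of $Z_{n,\ell}$ the paper appeals to the martingale maximal inequality of Proposition~\ref{prop:fct_max_LGN_martingales} applied to the $\pr{2\ell+1}$-dependent stationary sequence $\pr{\E{H^{\pr{\ell}}_{a,j}\mid V_{j,\ell}}}_{j}$, whereas you exploit the exact cancellation of the factor $\pr{4\ell+2}m$ against $n^{-1-1/p}$ and then use the elementary bound $\sum_{m\geq 1}\PP\ens{W>c\,m^{1/p}}\leq c^{-p}\E{W^{p}}$ together with the stationarity of the block maxima $\Phi_{\ell,m}$; this is more self-contained (no appendix maximal inequality is needed) and gives the same $\ell^{1-1/p}\theta_{\ell,p}$.

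Two points should be tightened, though neither is fatal and the paper is no more careful on either. (i) For the fluctuation, uncorrelatedness of increments is not sufficient to run von Bahr--Esseen when $p<2$; what you need (and what holds) is the martingale-difference property of $\pr{H^{\pr{\ell}}_{\pr{4\ell+2}k+a,j}-\E{H^{\pr{\ell}}_{\pr{4\ell+2}k+a,j}\mid V_{j,\ell}}}_{k}$ with respect to $\Fca_k=\sigma\pr{V_{j,\ell}}\vee \sigma\pr{V_{\pr{4\ell+2}i+a,\ell},\,i\leq k}$, which is valid because for fixed $a$ the windows are pairwise disjoint and the windows with index $i\leq k-1$ are also disjoint from that of $j$. (ii) Your identity $\sum_{i}H^{\pr{\ell}}_{i,j}=\pr{4\ell+2}m\,\phi_{\ell}\pr{V_{j,\ell}}+\text{fluctuation}$ fails for the at most $2\ell$ indices $i$ with $j-i\leq 2\ell$, for which $\E{H^{\pr{\ell}}_{i,j}\mid V_{j,\ell}}$ is a different function of $V_{j,\ell}$; you flag the restriction $\abs{i-j}>2\ell$ but never dispose of these terms. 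They are easy: there are $O\pr{\ell^{2}}$ of them per block, each of $\el^p$-norm at most $2\theta_{\ell,p}$, so the same crude block count used for $Y_{n,\ell}$ bounds their contribution by $c_p\ell^{1-1/p}\theta_{\ell,p}$; but the correction should be stated. (The paper's own assertion that $\E{H^{\pr{\ell}}_{\pr{4\ell+2}k+a,j}\mid V_{j,\ell}}$ is independent of $k$ overlooks the same overlapping pairs.)
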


\begin{proof}
First observe that $Y_{n,\ell}$ is a sum of at most $\pr{4\ell+2}^2$ random 
variables whose weak-$\el^p$-norm does not exceed $\theta_{\ell,p}$ hence  
by cutting the supremum where $n$ is between two consecutive multiples of 
$4\ell+2$ gives 
\begin{align}
\norm{\sup_{n\geq 1}\frac{1}{n^{1+1/p}} \abs{Y_{n,\ell} }}_{p,\infty}&
\leq \pr{\sum_{n\geq 1}\norm{\frac{1}{n^{1+1/p}} 
\abs{Y_{n,\ell} }}_{p,\infty}^p}^{1/p}\\
&\leq \pr{4\ell+2}^{2-1-1/p}\pr{\sum_{n\geq 1}n^{-1-p}}^{1/p}\theta_{\ell,p},
\end{align}
In order to treat $Z_{n,\ell}$, we decompose it as $Z'_{n,\ell}+Z''_{n,\ell}$, 
where 
\begin{equation}
Z'_{n,\ell}=\sum_{a=1}^{4\ell+2}\sum_{j=\pr{4\ell+2}\ent{\frac{n}{4\ell+2}   }+1     }^n 
\sum_{k=0}^{\ent{\frac{j-1}{4\ell+2}   }-1}
\pr{H^{\pr{\ell}}_{\pr{4\ell+2}k+a,j}  -\E{H^{\pr{\ell}}_{\pr{4\ell+2}k+a,j}
 \mid V_{j,\ell}} }
\end{equation}
\begin{equation}
Z''_{n,\ell}=\sum_{a=1}^{4\ell+2}\sum_{j=\pr{4\ell+2}\ent{\frac{n}{4\ell+2}   }+1     }^n 
\sum_{k=0}^{\ent{\frac{j-1}{4\ell+2}   }-1}
 \E{H^{\pr{\ell}}_{\pr{4\ell+2}k+a,j}
 \mid V_{j,\ell}} .
\end{equation}
Using \eqref{eq:norm_lp_faible_supremum}, it follows that
\begin{align}
\norm{\sup_{n\geq 1}\frac{1}{n^{1+1/p}} \abs{Z'_{n,\ell} }}_{p,\infty}&\leq 
\pr{\sum_{n\geq 1}\norm{\frac{1}{n^{1+1/p}} 
\abs{Z'_{n,\ell} }}_{p,\infty}^p}^{1/p}\\
&\leq \pr{\sum_{N\geq 1} N^{-p-1}\pr{4\ell+2}^{-p-1} \pr{\sum_{n=\pr{4\ell+2}N}^{
\pr{4\ell+2}\pr{N+1}-1} \norm{Z'_{n,\ell}}_{p,\infty}   }^p  }^{1/p}
\end{align}
and for all $n$ such that $\pr{4\ell+2}N\leq n\leq \pr{4\ell+2}\pr{N+1}-1$, 
\begin{align}
\norm{Z'_{n,\ell}}_{p,\infty}
&\leq \sum_{a=1}^{4\ell+2}\sum_{j=\pr{4\ell+2}\ent{\frac{n}{4\ell+2}   }+1     }^n 
 \norm{ \sum_{k=0}^{\ent{\frac{n}{4\ell+2}   }-1} 
\pr{H^{\pr{\ell}}_{\pr{4\ell+2}k+a,j}  -\E{H^{\pr{\ell}}_{\pr{4\ell+2}k+a,j}
 \mid V_{j,\ell}} }}_{p,\infty}\\
 &\leq \sum_{a=1}^{4\ell+2}\sum_{j=\pr{4\ell+2}N  +1     }^{\pr{4\ell+2}\pr{N+1}}
 \norm{ \sum_{k=0}^{N-1} 
\pr{H^{\pr{\ell}}_{\pr{4\ell+2}k+a,j}  -\E{H^{\pr{\ell}}_{\pr{4\ell+2}k+a,j}
 \mid V_{j,\ell}} }}_{p,\infty}
\end{align}
hence 
\begin{multline}\label{eq:estimation_fct_max_LGN_R1}
\norm{\sup_{n\geq 1}\frac{1}{n^{1+1/p}} \abs{Z_{n,\ell} }}_{p,\infty}\leq\\ 
 \pr{\sum_{N\geq 1} N^{-p-1}\pr{4\ell+2}^{-p-1} \pr{ 
\sum_{a=1}^{4\ell+2}\sum_{j=\pr{4\ell+2}N  +1     }^{\pr{4\ell+2}\pr{N+1}}
 \norm{ \sum_{k=0}^{N-1} 
\pr{H^{\pr{\ell}}_{\pr{4\ell+2}k+a,j}  -\E{H^{\pr{\ell}}_{\pr{4\ell+2}k+a,j}
 \mid V_{j,\ell}} }}_{p,\infty} 
      }^p  }^{1/p}.
\end{multline}
For all fixed $j$, we notice using Lemma~\ref{lem:propriete_esperance_cond_2_tribus}
 that $\pr{H^{\pr{\ell}}_{\pr{4\ell+2}k+a,j}  -\E{H^{\pr{\ell}}_{\pr{4\ell+2}k+a,j}
 \mid V_{j,\ell}} }_{0\leq k\leq N-1}$ is a martingale differences sequence with 
 respect to the filtration $\pr{\Fca_k}_{0\leq k\leq N-1}$ where 
 \begin{equation*}
 \Fca_k:=\sigma\pr{V_{j,\ell}}\vee \sigma\pr{
V_{\pr{4\ell+2}i+a,j},i\leq k} 
 \end{equation*}
 hence by Burkholder's inequality,  
 \begin{multline}
 \norm{ \sum_{k=0}^{N-1} 
\pr{H^{\pr{\ell}}_{\pr{4\ell+2}k+a,j}  -\E{H^{\pr{\ell}}_{\pr{4\ell+2}k+a,j}
 \mid V_{j,\ell}} }}_{p,\infty}^p\\
 \leq c_p  \sum_{k=0}^{N-1} \norm{
 H^{\pr{\ell}}_{\pr{4\ell+2}k+a,j}  -\E{H^{\pr{\ell}}_{\pr{4\ell+2}k+a,j}
 \mid V_{j,\ell}} }_{p,\infty}^p\\
 \leq  2^pc_p   \sum_{k=0}^{N-1} \norm{
 H^{\pr{\ell}}_{\pr{4\ell+2}k+a,j}}_{p}^p\leq 2^pc_pN\theta_{\ell,p}^p
 \end{multline}
and plugging this estimate into \eqref{eq:estimation_fct_max_LGN_R1} gives 
 \begin{multline} 
\norm{\sup_{n\geq 1}\frac{1}{n^{1+1/p}} \abs{Z'_{n,\ell} }}_{p,\infty}\leq
 C_p\pr{\sum_{N\geq 1} N^{-p-1}\pr{4\ell+2}^{-p-1} 
 \pr{4\ell+2}^{2p} \pr{
 N\theta_{\ell,p}^p
      }^p  }^{1/p}\\
      \leq C'_p\theta_{\ell,p}\ell^{1-1/p} .
\end{multline}
In order to treat the contribution of $Z''_{n,\ell}$, we observe that 
$ \E{H^{\pr{\ell}}_{\pr{4\ell+2}k+a,j}
 \mid V_{j,\ell}}$ is independent of $k$ hence 
 \begin{equation}
Z''_{n,\ell}=\sum_{a=1}^{4\ell+2}\sum_{j=\pr{4\ell+2}\ent{\frac{n}{4\ell+2}   }+1     }^n 
\ent{\frac{n}{4\ell+2}   }
 \E{H^{\pr{\ell}}_{ a,j}
 \mid V_{j,\ell}}.
\end{equation}
Consequently,  the control of the contribution of 
$\sup_{n\geq 1}n^{-1-1/p}\abs{Z''_{n,\ell}}$ can be done thanks to 
Proposition~\ref{prop:fct_max_LGN_martingales}.

This ends the proof of Lemma~\ref{lem:contribution_R1_LGN}.
 \end{proof}
\subsubsection{Treatment of terms of the form  $\sum_u H^{\pr{\ell}}_{a,\pr{4\ell+2}u+b    }$ and 
  $\sum_u
 H^{\pr{\ell}}_{\pr{4\ell+2}u+a,\pr{4\ell+2} \ent{\frac{n}{4\ell+2}}   +b    }$
}  

\begin{Lemma}\label{lem:contribution_LGN_terms_H_a_lu+b}
For all $\ell\geq 1$ and all $a,b\in [4\ell+2]$, the following inequality holds 
\begin{equation}
\norm{\sup_{n\geq 1}   
\frac 1{n^{1+1/p}}\abs{\sum_{u=1}^{\ent{\frac{n}{4\ell+2}}    }H^{\pr{\ell}}_{a,\pr{4\ell+2}u+b    }   }
}_{p,\infty}\leq c_p \pr{4\ell+2}^{-1-1/p}\theta_{\ell,p}.
\end{equation}
\end{Lemma}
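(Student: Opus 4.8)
The plan is to follow the template of the proof of Lemma~\ref{lem:contribution_R1_LGN}: isolate from the sum a martingale whose increments are controlled by $\theta_{\ell,p}$, together with a ``predictable'' part which turns out to be a constant multiple of a single centered function of $V_{a,\ell}$. Throughout I write $L:=4\ell+2$ and $m:=\ent{n/L}$, so that the sum is empty for $n<L$, and I use that $\norm{\cdot}_{p,\infty}$ satisfies the triangle inequality to treat the pieces separately.

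First I would record the dependence structure of the summands. Since $a,b\in[4\ell+2]$, for every $u\geq 2$ the window $\pr{\eps_s}_{s=Lu+b-\ell}^{Lu+b+\ell}$ defining $V_{Lu+b,\ell}$ starts at an index $\geq 7\ell+5$, hence is disjoint from $V_{a,\ell}$ (which is contained in $[1-\ell,5\ell+2]$) and from all $V_{Lv+b,\ell}$ with $2\leq v<u$. Consequently $\pr{V_{Lu+b,\ell}}_{u\geq 2}$ is i.i.d.\ and independent of $V_{a,\ell}$. This lets me split
\begin{equation}
\sum_{u=1}^{m}H^{\pr{\ell}}_{a,Lu+b}=H^{\pr{\ell}}_{a,L+b}+\sum_{u=2}^{m}\pr{H^{\pr{\ell}}_{a,Lu+b}-\phi\pr{V_{a,\ell}}}+\pr{m-1}\phi\pr{V_{a,\ell}},
\end{equation}
where $\phi\pr{V_{a,\ell}}:=\E{H^{\pr{\ell}}_{a,Lu+b}\mid V_{a,\ell}}$, which by the independence above does not depend on $u$ for $u\geq 2$ and satisfies $\E{\phi\pr{V_{a,\ell}}}=0$ together with $\norm{\phi\pr{V_{a,\ell}}}_{p,\infty}\leq \norm{\phi\pr{V_{a,\ell}}}_p\leq c\,\theta_{\ell,p}$ by conditional Jensen.

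I would then bound the three pieces. The term $H^{\pr{\ell}}_{a,L+b}$ is a single random variable with $\norm{H^{\pr{\ell}}_{a,L+b}}_p\leq c\theta_{\ell,p}$, and since it contributes only for $n\geq L$ one has $\sup_{n\geq 1}n^{-1-1/p}\abs{H^{\pr{\ell}}_{a,L+b}}\leq L^{-1-1/p}\abs{H^{\pr{\ell}}_{a,L+b}}$; likewise, using $m-1\leq n/L$ and $n\geq L$, one gets $\sup_{n\geq 1}\pr{m-1}n^{-1-1/p}\leq L^{-1-1/p}$. Hence the first and third pieces each have $\norm{\cdot}_{p,\infty}\leq cL^{-1-1/p}\theta_{\ell,p}$. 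For the middle piece I set $D_u:=H^{\pr{\ell}}_{a,Lu+b}-\phi\pr{V_{a,\ell}}$ and $\Gca_u:=\sigma\pr{V_{a,\ell}}\vee\sigma\pr{V_{Lv+b,\ell}\,:\,2\leq v\leq u}$; Lemma~\ref{lem:propriete_esperance_cond_2_tribus} together with the independence recorded above gives $\E{D_u\mid \Gca_{u-1}}=0$, so $\pr{D_u}_{u\geq 2}$ is a stationary martingale difference sequence with $\norm{D_u}_{p,\infty}\leq c\theta_{\ell,p}$.

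It then remains to control the maximal function of the partial sums $S'_m:=\sum_{u=2}^{m}D_u$. Using $n\geq Lm$ I would reduce to the Marcinkievicz normalisation,
\begin{equation}
\sup_{n\geq 1}\frac1{n^{1+1/p}}\abs{S'_{\ent{n/L}}}\leq L^{-1-1/p}\sup_{m\geq 1}\frac1{m^{1/p}}\abs{S'_m},
\end{equation}
and apply Proposition~\ref{prop:fct_max_LGN_martingales} to $\pr{D_u}_{u\geq 2}$, which bounds $\norm{\sup_{m\geq 1}m^{-1/p}\abs{S'_m}}_{p,\infty}$ by $c_p\theta_{\ell,p}$. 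Collecting the three estimates yields the claimed bound $c_p\pr{4\ell+2}^{-1-1/p}\theta_{\ell,p}$. The delicate point is precisely this last step at the endpoint $p=1$: a naive combination of \eqref{eq:norm_lp_faible_supremum} with Burkholder's inequality would leave a series $\sum_{m}m^{-p}$ which diverges at $p=1$, so one genuinely needs the weak-type maximal inequality of Proposition~\ref{prop:fct_max_LGN_martingales} (the martingale analogue of the $\pr{1,1}$ maximal inequality) to absorb the supremum; the only other care required is the window disjointness that makes $\pr{D_u}_{u\geq 2}$ stationary and $\phi$ independent of $u$.
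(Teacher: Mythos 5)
Your proof is correct on the range $1<p<2$, and it takes a genuinely different route from the paper's. The paper's own argument proceeds by (i) reducing to the case $0\leq a-b\leq 2\ell$ via adding and subtracting $H^{\pr{\ell}}_{a,\pr{4\ell+2}\pr{u+1}+b}$ and telescoping, (ii) restricting the supremum to multiples of $4\ell+2$, and (iii) writing the sum at $n=\pr{4\ell+2}N$ as $S'_N-S_N$, where $S_N=\sum_{1\leq i<j\leq N}W_{i,j}$, $S'_N=\sum_{0\leq i<j\leq N}W_{i,j}$ and $W_{i,j}:=H^{\pr{\ell}}_{\pr{4\ell+2}i+a,\pr{4\ell+2}j+b}$, so that both are U-statistics whose data are i.i.d.\ blocks of length $4\ell+2$ and Corollary~\ref{cor:MZSSLNiid} applies to each. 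You instead freeze the first window $V_{a,\ell}$, peel off the single possibly overlapping term $u=1$, and split the remaining summands into a stationary martingale-difference part plus a multiple of the centered conditional expectation $\E{H^{\pr{\ell}}_{a,\pr{4\ell+2}u+b}\mid V_{a,\ell}}$, concluding with Proposition~\ref{prop:fct_max_LGN_martingales}; your verifications of window disjointness, of the martingale-difference property via Lemma~\ref{lem:propriete_esperance_cond_2_tribus}, and the bookkeeping of the normalizations are all sound. What your route buys: no case distinction on $a-b$ and no telescoping at all (the windows $V_{\pr{4\ell+2}u+b,\ell}$, $u\geq 2$, are disjoint from $V_{a,\ell}$ for every $a,b\in[4\ell+2]$), and no U-statistic machinery, since you exploit that this term is really a one-parameter partial sum. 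What the paper's route buys is the endpoint: Corollary~\ref{cor:MZSSLNiid} is stated for all $p\in[1,2)$, whereas Proposition~\ref{prop:fct_max_LGN_martingales} is stated in the paper only for $1<p<2$, so your closing description of it as the martingale analogue of the weak $\pr{1,1}$ maximal inequality attributes to it more than the paper provides, and at $p=1$ your argument as written has a gap. For your particular sequence, which is stationary, Hopf's maximal ergodic inequality would close it in the weak quasi-norm $\norm{\cdot}_{1,w}$; note, however, that at $p=1$ the norm $\norm{\cdot}_{1,\infty}$ degenerates to the $\mathbb L^1$-norm, so the $p=1$ case of the statement in this norm is delicate for the paper's own sketch as well.
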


 \begin{Lemma}\label{lem:contribution_LGN_terms_H_a_lu,n+b}
For all $\ell\geq 1$ and all $a,b\in [4\ell+2]$, the following inequality holds 
\begin{equation}
\norm{\sup_{n\geq 1}   
\frac 1{n^{3/2}}\abs{\sum_{u=1}^{\ent{\frac{n}{4\ell+2}}    }H^{\pr{\ell}}_{
\pr{4\ell+2}u+a,\pr{4\ell+2}\ent{\frac{n}{4\ell+2}}+b       }}}_{p,\infty}\leq c_p \pr{4\ell+2}^{-1-1/p}\theta_{\ell,p}.
\end{equation}
\end{Lemma}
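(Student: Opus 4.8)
The plan is to first pass to a subsequence. Write $m:=4\ell+2$ and $N:=\ent{n/m}$, and set
\[
S_N:=\sum_{u=1}^{N}H^{\pr{\ell}}_{mu+a,\,mN+b}.
\]
Since $S_N$ depends on $n$ only through $N$, it is constant on each block $\ens{mN\leq n<m\pr{N+1}}$, where $n^{-3/2}$ is largest at $n=mN$; hence $\sup_{n\geq1}n^{-3/2}\abs{S_{\ent{n/m}}}=\sup_{N\geq1}\pr{mN}^{-3/2}\abs{S_N}$, and it suffices to bound the weak-$\el^p$ norm of the right-hand side. The structural point is that every summand of $S_N$ shares the common window $V_{mN+b,\ell}$, while the first windows $V_{mu+a,\ell}$, $1\leq u\leq N$, are pairwise disjoint (consecutive centres differ by $m=4\ell+2>2\ell$); conditionally on $V_{mN+b,\ell}$ they are therefore independent. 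This suggests splitting $S_N$ along its conditional mean given the shared window:
\[
S_N=A_N+B_N,\qquad A_N:=\sum_{u=1}^{N}\E{H^{\pr{\ell}}_{mu+a,\,mN+b}\mid V_{mN+b,\ell}},
\]
and $B_N:=S_N-A_N$.

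For $B_N$ I would use a martingale argument. For fixed $N$, the increments $H^{\pr{\ell}}_{mu+a,\,mN+b}-\E{H^{\pr{\ell}}_{mu+a,\,mN+b}\mid V_{mN+b,\ell}}$ form a martingale difference sequence with respect to $\sigma\pr{V_{mN+b,\ell}}\vee\sigma\pr{V_{mi+a,\ell}:i\leq u}$: the conditional independence established above, together with Lemma~\ref{lem:propriete_esperance_cond_2_tribus}, gives $\E{\,\cdot\mid\Fca_{u-1}}=\E{\,\cdot\mid V_{mN+b,\ell}}$. Burkholder's inequality then yields $\norm{B_N}_{p,\infty}^p\leq c_p\sum_{u=1}^N\norm{H^{\pr{\ell}}_{mu+a,\,mN+b}}_p^p\leq c_pN\theta_{\ell,p}^p$. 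Feeding this into \eqref{eq:norm_lp_faible_supremum}, after the customary dyadic blocking in $N$ to keep the resulting series convergent for every $p\in[1,2)$, produces a contribution of the required order, the factor $\pr{4\ell+2}^{-3/2}$ coming directly from the normalisation $\pr{mN}^{-3/2}$.

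The term $A_N$ is where the real work lies. For all but boundedly many $u$ (those near $u=N$, where $V_{mu+a,\ell}$ meets the shared window) the first window is independent of $V_{mN+b,\ell}$, so by Lemma~\ref{lem:propriete_esperance_cond_2_tribus} and stationarity $\E{H^{\pr{\ell}}_{mu+a,\,mN+b}\mid V_{mN+b,\ell}}$ equals a single centred function $g\pr{V_{mN+b,\ell}}$ independent of $u$, with $\norm{g\pr{V_{0,\ell}}}_2\leq2\theta_{\ell,2}$; thus $A_N=Ng\pr{V_{mN+b,\ell}}$ up to $O\pr{1}$ boundary terms, each of weak-$\el^p$ norm at most $\theta_{\ell,p}$. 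The difficulty is that, unlike in the companion Lemma~\ref{lem:contribution_LGN_terms_H_a_lu+b}, where the projection is onto a fixed window and the supremum is attained at $N=1$, here the conditioning window $V_{mN+b,\ell}$ moves with $N$, so $\pr{g\pr{V_{mN+b,\ell}}}_{N\geq1}$ is a genuine strictly stationary sequence rather than a single random variable. After normalisation its contribution is $\pr{4\ell+2}^{-3/2}\sup_{N}N^{-1/2}\abs{g\pr{V_{mN+b,\ell}}}$, and the naive estimate through \eqref{eq:norm_lp_faible_supremum} diverges for $p<2$.

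I expect this last point to be the crux, and the remedy is a maximal inequality of Borel–Cantelli type at the second-moment level. Since $g\pr{V_{mN+b,\ell}}$ is stationary and square-integrable, one has $\sum_{N\geq1}\PP\ens{\abs{g\pr{V_{mN+b,\ell}}}>x\sqrt N}\leq x^{-2}\norm{g\pr{V_{0,\ell}}}_2^2$, and comparing the sum with the corresponding integral gives $\norm{\sup_{N}N^{-1/2}\abs{g\pr{V_{mN+b,\ell}}}}_{2,w}\leq\norm{g\pr{V_{0,\ell}}}_2\leq2\theta_{\ell,2}$. This is precisely the step that forces a second-order coefficient $\theta_{\ell,2}$ upon the argument (the fluctuation $B_N$ being controllable with $\theta_{\ell,p}$ alone), and it pins the correct power of $4\ell+2$ to be $-3/2$, which is exactly what is needed to make the sum over $a,b$ and then over $\ell$ converge against $\sum_{\ell}\ell^{1/2}\theta_{\ell,2}$ in the law of the iterated logarithm of Theorem~\ref{thm:LLI_Bernoulli}. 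Collecting the estimates for $A_N$ and $B_N$ then yields the asserted control.
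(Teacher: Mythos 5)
Your decomposition does not prove the inequality as stated, and the places where it falls short are exactly where the paper takes a different route. Writing $m:=4\ell+2$ as you do, the main gap is in your treatment of $A_N$: your Borel--Cantelli estimate produces a bound by $\theta_{\ell,2}$, whereas the right-hand side of the lemma is $c_p\pr{4\ell+2}^{-1-1/p}\theta_{\ell,p}$. This is not cosmetic: the lemma is invoked in the proofs of Theorems~\ref{thm:bounded_LLN} and~\ref{thm:MarcinK_LLN}, where only expressions in $\theta_{\ell,p}$, $p<2$, are available (the law of the iterated logarithm has its own counterpart, Lemma~\ref{lem:contribution_LLI_terms_H_a_lu,n+b}, so ``being what Theorem~\ref{thm:LLI_Bernoulli} needs'' does not rescue the argument). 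Moreover, your own computation shows the obstruction is intrinsic to your decomposition at the $n^{-3/2}$ scaling: the variables $g\pr{V_{mN+b,\ell}}$, $N\geq 1$, are \emph{independent} (their windows are disjoint), so if $g\notin\el^2$ the converse Borel--Cantelli lemma gives $\sup_{N}N^{-1/2}\abs{g\pr{V_{mN+b,\ell}}}=+\infty$ almost surely; no refinement along this route can yield a $\theta_{\ell,p}$ bound. The resolution is that the exponent $3/2$ in the display is inconsistent with the factor $\pr{4\ell+2}^{-1-1/p}$ on the right: this lemma belongs to the law-of-large-numbers section and, as in the companion Lemma~\ref{lem:contribution_LGN_terms_H_a_lu+b}, the operative normalization is $n^{1+1/p}$, under which your $A_N$ contributes $m^{-1-1/p}\sup_N N^{-1/p}\abs{g\pr{V_{mN+b,\ell}}}$ and a union bound with $p$-th moments alone suffices.

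There is a second gap, in the estimate of $B_N$. From $\norm{B_N}_{p,\infty}^p\leq c_pN\theta_{\ell,p}^p$ and \eqref{eq:norm_lp_faible_supremum} you need the series $\sum_{N\geq 1}N\cdot N^{-3p/2}$ (or $\sum_{N\geq 1}N\cdot N^{-p-1}$ under the corrected normalization) to converge; this fails for all $p\leq 4/3$ (respectively at $p=1$), so the claimed range $p\in[1,2)$ is not covered. Dyadic blocking cannot repair this: the family $\pr{B_N}_{N\geq 1}$ is not a martingale in $N$ --- each $B_N$ is centered with respect to a \emph{different} window $V_{mN+b,\ell}$ --- so inside a dyadic block one only has the union bound, and the same divergent series reappears. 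The paper's proof avoids both problems entirely. After the telescoping reduction to $0\leq a-b\leq 2\ell$ and the restriction of the supremum to multiples of $4\ell+2$, it writes the row sum as $S_N-S_{N-1}$ (plus the single diagonal term $H^{\pr{\ell}}_{mN+a,mN+b}$), where $S_N:=\sum_{1\leq i<j\leq N}H^{\pr{\ell}}_{mi+a,mj+b}$ is a genuine U-statistic of the i.i.d. blocks $\eps^{a,b}_u$ with centered kernel; Corollary~\ref{cor:MZSSLNiid} then bounds $\norm{\sup_{N\geq 1}N^{-1-1/p}\abs{S_N}}_{p,\infty}$, and likewise for $S_{N-1}$, by $c_p\theta_{\ell,p}$, and the triangle inequality gives the lemma for every $p\in[1,2)$ with no conditional centering and no summation over $N$ at all. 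Your conditional-centering-plus-Burkholder scheme is the right tool for $R_{n,1,2}$ (it is exactly Lemma~\ref{lem:contribution_R1_LGN}), but for the present ``last row'' term the maximal U-statistic inequality is what makes the estimate work down to $p=1$ and with the correct coefficient.
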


These two lemmas are the consequence of the following observations.
\begin{enumerate}
 \item We first assume that $0\leq a-b\leq 2\ell$; if not we add 
and substract $H^{\pr{\ell}}_{a,\pr{4\ell+2}\pr{u+1}+b    }$ and 
use telescoping to treat instead $\sum_{u=1}^{\ent{\frac{n}{4\ell+2}}    }H^{\pr{\ell}}_{a,\pr{4\ell+2}\pr{u+1}+b    }   $ (where we will 
apply the previous case to $\widetilde{a}=a$ and 
$\widetilde{b}=b+2\ell+1$). 
A similar method can be used to treat the sum involved in 
Lemma~\ref{lem:contribution_LGN_terms_H_a_lu,n+b}.
\item The supremum involved in the statement can be restricted 
to the integers $n$ with are a multiple of $4\ell+2$.
\item For all integer $N$ and all sequence of random variables 
$W_{i,j}$, $i,j\geq 1$, we write 
\begin{equation}
S_n:=\sum_{1\leq i<j\leq n} W_{i,j},\quad 
S'_n:=\sum_{0\leq i<j\leq n} W_{i,j}.
\end{equation}
With the choice $W_{i,j}:=  H^{\pr{\ell}}_{\pr{4\ell+2}i+a
,\pr{4\ell+2}j+b    }$, the following equality holds:
\begin{equation}
 \sum_{u=1}^{\ent{\frac{n}{4\ell+2}}    }H^{\pr{\ell}}_{a,\pr{4\ell+2}u+b    } = S'_{N}-S_{N},
\end{equation}
when $n=\pr{4\ell+2}N$. 
Since $S'_N$ and $S_N$ can be both expressed as U-statistics 
of i.i.d. data, we can use Corollary~\ref{cor:MZSSLNiid} to 
treat these terms. 

Lemma~\ref{lem:contribution_LGN_terms_H_a_lu,n+b} can be done 
in a similar way: we express this time the involved sum as 
$S_N-S_{N-1}$.
\end{enumerate}

\subsubsection{Treatment of terms of the form  $\sum_u
 H^{\pr{\ell}}_{\pr{4\ell+2}u+a,\pr{4\ell+2}u+b    }$ }  
 
 \begin{Lemma}\label{lem:contribution_LGN_terms_H_lu+a_lu+b}
For all $\ell\geq 1$ and all $a,b\in [4\ell+2]$, the following inequality holds 
\begin{equation}
\norm{\sup_{n\geq 1}   
\frac 1{n^{1+1/p}}\abs{\sum_{u=1}^{\ent{\frac{n}{4\ell+2}}    }H^{\pr{\ell}}_{
\pr{4\ell+2}u+a,\pr{4\ell+2}u+b    }   }}_{p,\infty}\leq c_p \pr{4\ell+2}^{-1-1/p}\theta_{\ell,p}.
\end{equation}
\end{Lemma}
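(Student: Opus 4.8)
The plan is to treat $\sum_{u=1}^{\ent{\frac{n}{4\ell+2}}}H^{\pr{\ell}}_{\pr{4\ell+2}u+a,\pr{4\ell+2}u+b}$ as the partial sum of a stationary sequence, and to exploit that the normalisation $n^{1+1/p}$ is far stronger than what a Marcinkievicz--Zygmund law for ordinary sums would require. Writing $\xi_u:=H^{\pr{\ell}}_{\pr{4\ell+2}u+a,\pr{4\ell+2}u+b}$ and $T_N:=\sum_{u=1}^N\xi_u$, the quantity inside the supremum is $T_{\ent{\frac{n}{4\ell+2}}}/n^{1+1/p}$. For a fixed $N\geq 1$, all integers $n$ with $\ent{\frac{n}{4\ell+2}}=N$ satisfy $n\geq\pr{4\ell+2}N$, so the supremum of $T_N/n^{1+1/p}$ over that range is attained at $n=\pr{4\ell+2}N$. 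Hence the left-hand side equals $\pr{4\ell+2}^{-1-1/p}W$ with $W:=\sup_{N\geq 1}\abs{T_N}/N^{1+1/p}$, and it suffices to prove $\norm{W}_{p,\infty}\leq c_p\theta_{\ell,p}$.

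First I would record the bound on a single term. Setting $G_{i,j}:=h\pr{f_\ell\pr{V_{i,\ell}},f_\ell\pr{V_{j,\ell}}}-h\pr{f_{\ell-1}\pr{V_{i,\ell-1}},f_{\ell-1}\pr{V_{j,\ell-1}}}$, one has $H^{\pr{\ell}}_{i,j}=G_{i,j}-\E{G_{i,j}}$. By the symmetry of $h$ we may assume $i\leq j$, and by stationarity $G_{i,j}$ has the same law as the increment defining $\theta_{\ell,p}$ in \eqref{eq:definition_de_la_mesure_de_dependence} for the index $j-i\geq 0$; thus $\norm{G_{i,j}}_p\leq\theta_{\ell,p}$ and, after centering, $\norm{\xi_u}_p\leq 2\theta_{\ell,p}$ for every $u$.

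Next I would handle the supremum by a dyadic blocking. On the block $2^k\leq N<2^{k+1}$ we have $N^{1+1/p}\geq 2^{k\pr{1+1/p}}$, so $W\leq\sup_{k\geq 0}2^{-k\pr{1+1/p}}M_k$ with $M_k:=\max_{2^k\leq N<2^{k+1}}\abs{T_N}$. The pleasantly crude point is that the strong normalisation lets us estimate $M_k$ by the triangle inequality alone: $M_k\leq\sum_{u=1}^{2^{k+1}-1}\abs{\xi_u}$, so that $\norm{M_k}_p\leq 2^{k+2}\theta_{\ell,p}$. Applying \eqref{eq:norm_lp_faible_supremum} to $Y_k:=2^{-k\pr{1+1/p}}M_k$ and using $\norm{\cdot}_{p,w}\leq\norm{\cdot}_p$ gives $\norm{W}_{p,w}^p\leq\sum_{k\geq 0}2^{-k\pr{p+1}}\norm{M_k}_p^p\leq 2^{2p}\theta_{\ell,p}^p\sum_{k\geq 0}2^{-k}$, a convergent geometric series. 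The equivalence of $\norm{\cdot}_{p,w}$ and $\norm{\cdot}_{p,\infty}$ then yields $\norm{W}_{p,\infty}\leq c_p\theta_{\ell,p}$, and reinstating the factor $\pr{4\ell+2}^{-1-1/p}$ finishes the proof.

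The only genuine subtlety I anticipate is making the dyadic summation converge uniformly for $p\in[1,2)$, and in particular at the endpoint $p=1$: applying \eqref{eq:norm_lp_faible_supremum} directly to the individual terms $T_N/N^{1+1/p}$ produces a divergent $\sum_N N^{-1}$, and it is precisely the grouping into dyadic blocks that turns this into the convergent $\sum_k 2^{-k}$. I note that one could alternatively split the sum over $u$ into even and odd indices — since the $\eps$-windows $[\pr{4\ell+2}u-\ell+1,\pr{4\ell+2}u+5\ell+2]$ of two terms whose indices differ by at least two are disjoint, each subsum is a sum of i.i.d. centred variables and a maximal inequality applies — but because the normalisation is so strong, the crude triangle bound above already delivers the claimed estimate and no independence is required.
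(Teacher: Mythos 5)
Your argument is correct for $1<p<2$, and it takes a genuinely different route from the paper. The paper's proof of this lemma is a one-liner: it observes that $\pr{H^{\pr{\ell}}_{\pr{4\ell+2}u+a,\pr{4\ell+2}u+b}}_{u\geq 1}$ is a two-dependent sequence --- precisely the disjoint-windows observation you relegate to your closing remark --- so that after splitting into even and odd indices one obtains sums of i.i.d.\ centered variables, to which a maximal inequality with the much stronger normalisation $N^{1/p}$ (in the spirit of Proposition~\ref{prop:fct_max_LGN_martingales}) applies. You instead exploit that only the normalisation $N^{1+1/p}$ is required: your reduction of the left-hand side to $\pr{4\ell+2}^{-1-1/p}\norm{\sup_{N\geq 1}\abs{T_N}N^{-1-1/p}}_{p,\infty}$ (the supremum over each block of $n$'s being attained at $n=\pr{4\ell+2}N$), the bound $\norm{\xi_u}_p\leq 2\theta_{\ell,p}$ via stationarity and symmetry of $h$, and the dyadic blocking with the crude bound $\norm{M_k}_p\leq 2^{k+2}\theta_{\ell,p}$ are all correct, and no independence whatsoever is used. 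This is more elementary and self-contained; what the paper's route buys is sharpness in the normalisation (it would prove the lemma with $n^{1/p}$ in place of $n^{1+1/p}$), which is simply not needed here.

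There is, however, one genuine defect, at the endpoint $p=1$. The lemma feeds into Theorems~\ref{thm:bounded_LLN} and~\ref{thm:MarcinK_LLN}, which are stated for $p\in[1,2)$, so the case $p=1$ cannot be discarded. Your final step invokes the equivalence of $\norm{\cdot}_{p,w}$ and $\norm{\cdot}_{p,\infty}$, which the paper states only for $p>1$ and which genuinely fails at $p=1$: there one has $\norm{X}_{1,\infty}=\E{\abs{X}}$, which is not controlled by the weak-$\mathbb L^1$ quasinorm (and the constant $C_p$ in $\norm{X}_{p,\infty}\leq C_p\norm{X}_{p,w}$ blows up as $p\downarrow 1$). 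The fix is immediate and in fact shortens your proof: bypass the weak norm entirely. Since $\norm{\cdot}_{p,\infty}$ is a true norm (this is the very reason the paper introduces it) and $\norm{X}_{p,\infty}\leq \norm{X}_p$ by H\"older's inequality, you may bound the supremum over dyadic blocks by the corresponding sum:
\begin{equation*}
\norm{\sup_{k\geq 0}2^{-k\pr{1+1/p}}M_k}_{p,\infty}
\leq \sum_{k\geq 0}2^{-k\pr{1+1/p}}\norm{M_k}_{p}
\leq 4\theta_{\ell,p}\sum_{k\geq 0}2^{-k/p}\leq c_p\,\theta_{\ell,p},
\end{equation*}
which is valid for every $p\in[1,2)$; with this modification the appeal to \eqref{eq:norm_lp_faible_supremum} is not needed either.
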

This follows from the fact 
that $\pr{H^{\pr{\ell}}_{\pr{4\ell+2}u+a,\pr{4\ell+2}u+b}   }_{u\geq 1}$
forms a two-dependent sequence.

\subsubsection{Proof of Theorem~\ref{thm:bounded_LLN} and 
Theorem~\ref{thm:MarcinK_LLN}}

Theorem~\ref{thm:bounded_LLN} is a consequence of the combination of 
Proposition~\ref{prop:Hoeffding_decomposition}, the 
estimates of Lemmas~\ref{lem:contribution_R1_LGN}, 
\ref{lem:contribution_LGN_terms_H_a_lu+b}, \ref{lem:contribution_LGN_terms_H_lu+a_lu+b} and \ref{lem:contribution_LGN_terms_H_a_lu,n+b}.

In order to prove Theorem~\ref{thm:MarcinK_LLN}, we start 
from the decomposition 
\begin{equation}\label{eq:decomposition_U_N_somme_coupee_en_L}
U_n\pr{h,f,\pr{\eps_i}_{i\in\Z}}-\E{U_n\pr{h,f,\pr{\eps_i}_{i\in\Z}}}= 
A_{n,L}+B_{n,L},
\end{equation}
where for a fixed $L$, $A_{n,L}$ is the sum for the indexes $\ell$ smaller or 
equal to $L$ (viewing the terms associated to $V_{k,0}$ as the corresponding 
ones for $\ell=0$) and $B_{n,L}$ the remaining term. We have to prove that 
for each positive $\varepsilon$, 
\begin{equation}
\lim_{N\to+\infty} \PP\ens{\sup_{n\geq N}
\frac 1{n^{1+1/p}}\abs{U_n\pr{h,f,\pr{\eps_i}_{i\in\Z}}-\E{U_n\pr{h,f,\pr{\eps_i}_{i\in\Z}}} }>2\eps
}=0.
\end{equation}
Using \eqref{eq:decomposition_U_N_somme_coupee_en_L} and the 
fact that $A_{n,L}$ consists of sums of terms which 
can be treated by Lemmas~\ref{lem:contribution_R1_LGN}, 
\ref{lem:contribution_LGN_terms_H_a_lu+b}, \ref{lem:contribution_LGN_terms_H_lu+a_lu+b} and \ref{lem:contribution_LGN_terms_H_a_lu,n+b}, we derive that 
for all fixed $L$, 
\begin{equation}
\limsup_{N\to+\infty} \PP\ens{\sup_{n\geq N}
\frac 1{n^{1+1/p}}\abs{U_n\pr{h,f,\pr{\eps_i}_{i\in\Z}}-\E{U_n\pr{h,f,\pr{\eps_i}_{i\in\Z}}} }>2\eps
}\leq \limsup_{N\to+\infty} \PP\ens{\sup_{n\geq N}
\frac 1{n^{1+1/p}}\abs{B_{n,L} }>\eps}.
\end{equation}
Bounding the latter probability by 
$\eps^{-p}\norm{\sup_{n\geq 1}
\frac 1{n^{1+1/p}}\abs{B_{n,L} }}_{p,w}^p$ and using Lemmas~\ref{lem:contribution_R1_LGN}, 
\ref{lem:contribution_LGN_terms_H_a_lu+b}, \ref{lem:contribution_LGN_terms_H_lu+a_lu+b} and \ref{lem:contribution_LGN_terms_H_a_lu,n+b}, we can 
see that for some constant $C$ depending only on $p$, 
\begin{equation}
\norm{\sup_{n\geq 1}
\frac 1{n^{1+1/p}}\abs{B_{n,L} }}_{p,w}\leq C \sum_{\ell\geq L}
\ell^2\theta_{\ell,p},
\end{equation}
which can be made arbitrarily small.

\subsection{Proof of Theorem~\ref{thm:LLI_Bernoulli}}

Like for the results on the strong law of larger numbers, we have to control 
the contribution of the extra-terms in the decomposition obtained in 
Proposition~\ref{prop:Hoeffding_decomposition}.

\subsubsection{Treatment of $R_{n,1,1}$ and $R_{n,1,2}$}  

Recall that 
\begin{multline}
H^{\pr{\ell}}_{i,j}:= 
h\pr{f_\ell\pr{V_{i,\ell}},f_\ell\pr{V_{j,\ell}}}
-\E{h\pr{f_\ell\pr{V_{i,\ell}},f_\ell\pr{V_{j,\ell}}}}\\-
\pr{h\pr{f_{\ell-1}\pr{V_{i,\ell-1}},f_{\ell-1}\pr{V_{j,\ell-1}}}
-\E{h\pr{f_{\ell-1}\pr{V_{i,\ell-1}},f_{\ell-1}\pr{V_{j,\ell-1}}}}}.
\end{multline}
 and
\begin{equation}
R_{n,1,1}:= \sum_{\ell\geq 1}
Y_{n,\ell};\quad Y_{n,\ell}:= 
\sum_{j=\pr{4\ell+2}\ent{\frac{n}{4\ell+2}   }+1     }^n 
\sum_{i=\pr{4\ell+2}\ent{\frac{j-1}{4\ell+2}   }+1 }^{j-1}H^{\pr{\ell}}_{i,j}
\end{equation}
\begin{equation}
R_{n,1,2}:= \sum_{\ell\geq 1}Z_{n,\ell};\quad Z_{n,\ell}:=
\sum_{a=1}^{4\ell+2}\sum_{j=\pr{4\ell+2}\ent{\frac{n}{4\ell+2}   }+1     }^n 
\sum_{k=0}^{\ent{\frac{j-1}{4\ell+2}   }-1}H^{\pr{\ell}}_{\pr{4\ell+2}k+a,j}.
\end{equation}
For a fix $\ell\geq 1$, we evaluate the contribution of of $Y_{n,\ell}$ and 
$Z_{n,\ell}$.

\begin{Lemma}\label{lem:contribution_R1_LLI}
Let $\ell \geq 1$. The following inequalities hold.
\begin{equation}
\norm{\sup_{n\geq 1}\frac{1}{n^{3/2}\sqrt{\LL{n}}} \abs{Y_{n,\ell} }}_{p,\infty}\leq 
c_p\ell^{1-1/p}\theta_{\ell,p}
\end{equation}
\begin{equation}
\norm{\sup_{n\geq 1}\frac{1}{n^{3/2}\sqrt{\LL{n}}} \abs{Z_{n,\ell} }}_{p,\infty}\leq c_p\ell^2\theta_{\ell,p}.
\end{equation}
\end{Lemma}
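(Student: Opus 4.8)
The plan is to follow verbatim the scheme of the proof of Lemma~\ref{lem:contribution_R1_LGN}, merely replacing the normalization $n^{1+1/p}$ by $n^{3/2}\sqrt{\LL{n}}$ and checking that the extra iterated--logarithm factor still leaves convergent series. As there, I would first cut the supremum into the blocks $\ens{\pr{4\ell+2}N+1,\dots,\pr{4\ell+2}\pr{N+1}}$, so that on the $N$-th block one has $n\geq\pr{4\ell+2}N$ and hence $n^{3/2}\sqrt{\LL{n}}\geq\pr{\pr{4\ell+2}N}^{3/2}\sqrt{\LL{\pr{4\ell+2}N}}$, and then apply \eqref{eq:norm_lp_faible_supremum} to reduce matters to a per-block estimate of the weak-$\el^p$ norm of $Y_{n,\ell}$ (resp. $Z_{n,\ell}$). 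The block summation is then governed by $\sum_{N\geq 1}N^{-3p/2}\pr{\LL{\pr{4\ell+2}N}}^{-p/2}$, which converges since $3p/2>1$; this is the only place the $\LL$-factor is used, and it is harmless.

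For $Y_{n,\ell}$ the crude triangle bound $\norm{Y_{n,\ell}}_{p,\infty}\leq\pr{4\ell+2}^2\theta_{\ell,p}$ that sufficed in Lemma~\ref{lem:contribution_R1_LGN} is no longer enough: because the present normalization is smaller, it would only yield the exponent $1/2$ rather than the announced $1-1/p$. I would therefore exploit the martingale structure inside a block, exactly as for the $D_j$ in the proof of Proposition~\ref{prop:MZSSLNiid}: after the Hoeffding centering, the block sums $\sum_{i<j}H^{\pr{\ell}}_{i,j}$ form a martingale-difference sequence in $j$, so Burkholder's (or the von Bahr--Esseen) inequality gives a sharper bound on $\norm{\sup_{n\in\text{block }N}\abs{Y_{n,\ell}}}_{p,\infty}$, tight enough that after the block summation the powers of $4\ell+2$ combine to $\pr{4\ell+2}^{1-1/p}$, i.e. the claimed $c_p\ell^{1-1/p}\theta_{\ell,p}$.

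For $Z_{n,\ell}$ I would reuse the splitting $Z_{n,\ell}=Z'_{n,\ell}+Z''_{n,\ell}$ of the proof of Lemma~\ref{lem:contribution_R1_LGN}. The centered part $Z'_{n,\ell}$ is, for each fixed $\pr{a,j}$, a martingale in $k$, so Burkholder's inequality again gives a contribution of order $\ell^{1-1/p}\theta_{\ell,p}$, dominated by $\ell^2\theta_{\ell,p}$. The remaining part $Z''_{n,\ell}=\ent{\frac{n}{4\ell+2}}\sum_{a}\sum_{j}\E{H^{\pr{\ell}}_{a,j}\mid V_{j,\ell}}$ is $\ent{\frac{n}{4\ell+2}}$ times a window sum of the strictly stationary, $2\ell$-dependent sequence $G_j:=\sum_{a=1}^{4\ell+2}\E{H^{\pr{\ell}}_{a,j}\mid V_{j,\ell}}$; writing that window as the difference of two genuine partial sums $S_m^G:=\sum_{j=1}^m G_j$ and invoking the bounded law of the iterated logarithm for $G$ (the $\sqrt{m\LL{m}}$-scaled maximal inequality in the spirit of Theorem~\ref{thm:LLI_Bernoulli}, analogous to what Proposition~\ref{prop:fct_max_LGN_martingales} does for the ordinary law of large numbers), the prefactor $\ent{\frac{n}{4\ell+2}}\sqrt{n\LL{n}}\,n^{-3/2}\LL{n}^{-1/2}\asymp\pr{4\ell+2}^{-1}$ collapses and one is reduced to controlling $\sup_m\abs{S_m^G}/\sqrt{m\LL{m}}$.

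The \emph{main obstacle} is precisely this stationary term $Z''_{n,\ell}$. Bounding its window sum by the triangle inequality would force the block series $\sum_N N^{-p/2}\pr{\LL{N}}^{-p/2}$, which diverges for $p<2$ (here $p/2<1$); thus the cancellation carried by the iterated logarithm is genuinely indispensable, and one must route the estimate through the bounded-LIL maximal inequality for the $2\ell$-dependent sequence $G$. Tracking the resulting $\ell$-losses there — a factor $4\ell+2$ coming from the sum over $a$ in the definition of $G$, together with its dependence range $2\ell+1$ — is what degrades the exponent from $\ell^{1-1/p}$ (valid for $Y$ and $Z'$) to the stated $c_p\ell^2\theta_{\ell,p}$.
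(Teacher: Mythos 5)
Your blocking step and your treatment of $Z''_{n,\ell}$ do match the paper's argument (the paper is terse there, but it indeed reduces $Z''_{n,\ell}$ to window sums of the stationary $\pr{2\ell+1}$-dependent sequence $j\mapsto\sum_a\E{H^{\pr{\ell}}_{a,j}\mid V_{j,\ell}}$ and invokes Proposition~\ref{prop:fct_max_LLI_fct_de_ell_iid}), and your remark that a plain triangle-inequality bound on $Z''_{n,\ell}$ would produce the divergent series $\sum_N N^{-p/2}\pr{\LL{N}}^{-p/2}$ is correct. The gaps are in $Y_{n,\ell}$ and $Z'_{n,\ell}$. For $Y_{n,\ell}$, the claimed martingale structure does not exist: the sums $D_j:=\sum_i H^{\pr{\ell}}_{i,j}$ are \emph{not} a martingale differences sequence in $j$. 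In Proposition~\ref{prop:MZSSLNiid} that structure comes from the degeneracy hypothesis $\E{h\pr{\eps_0,x}}=0$ for all $x$, whereas $H^{\pr{\ell}}_{i,j}$ is merely mean-centered, so $\E{H^{\pr{\ell}}_{i,j}\mid V_{j,\ell}}\neq 0$ in general (degenerate kernels are what the generalized Hoeffding decomposition produces in the $U^{\ind}$ terms, not in these remainders); moreover the windows $V_{i,\ell},V_{j,\ell}$ with $i,j$ in a single block of length $4\ell+2$ overlap. So the Burkholder boost you propose for $Y_{n,\ell}$ has no basis. The paper does not attempt it: its proof of the $Y$-bound is exactly the crude blocked bound you describe, whose honest output is $c\pr{4\ell+2}^{1/2}\theta_{\ell,p}$; the exponent $1-1/p$ in the statement (and in the last display of the paper's own computation) is inconsistent with that computation and is evidently carried over from Lemma~\ref{lem:contribution_R1_LGN}, and $\ell^{1/2}$ is all that the proof of Theorem~\ref{thm:LLI_Bernoulli} uses.

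For $Z'_{n,\ell}$, transplanting the $\el^p$-Burkholder (von Bahr--Esseen) step verbatim fails quantitatively, and this is precisely where the LLI proof must deviate from the LGN proof. Per fixed $\pr{a,j}$, Burkholder in $\el^p$ gives $cN^{1/p}\theta_{\ell,p}$ for the sum over $k$; against the normalization $n^{3/2}\sqrt{\LL{n}}$ the block series then becomes $\sum_N N^{-3p/2}\cdot N=\sum_N N^{1-3p/2}$, which diverges for $p\leq 4/3$ (under the LGN normalization $n^{1+1/p}$ it was $\sum_N N^{-p}$, convergent for all $p>1$, which is why the verbatim transfer looks harmless but is not). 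The paper's proof instead bounds $\norm{\cdot}_{p,\infty}$ by $\norm{\cdot}_{2}$ and uses orthogonality of the martingale increments in $\el^2$, obtaining the per-$\pr{a,j}$ bound $cN^{1/2}\theta_{\ell,2}$ and the block series $\sum_N N^{-p}$, convergent for every $p>1$; the price is that the estimate is in terms of $\theta_{\ell,2}$ rather than $\theta_{\ell,p}$, which is exactly why Theorem~\ref{thm:LLI_Bernoulli} is stated with the coefficients $\theta_{\ell,2}$ (the $\theta_{\ell,p}$ in the statement of Lemma~\ref{lem:contribution_R1_LLI} is another slip of the paper). To repair your proposal: keep the crude blocked bound for $Y_{n,\ell}$, and for $Z'_{n,\ell}$ replace Burkholder in $\el^p$ by the passage to $\el^2$ plus increment orthogonality.
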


\begin{proof}
First observe that $Y_{n,\ell}$ is a sum of at most $\pr{4\ell+2}^2$ random 
variables whose weak-$\el^p$-norm does not exceed $\theta_{\ell,p}$
hence  
by cutting the supremum where $n$ is between two consecutive multiples of 
$4\ell+2$ gives  
\begin{align}
\norm{\sup_{n\geq 1}\frac{1}{n^{3/2}\sqrt{\LL{n}}} \abs{Y_{n,\ell} }}_{p,\infty}&
\leq \pr{\sum_{n\geq 1}\norm{\frac{1}{n^{3/2}\sqrt{\LL{n}}} 
\abs{Y_{n,\ell} }}_{p,\infty}^p}^{1/p}\\
&\leq c\pr{4\ell+2}^{1-1/p}\pr{\sum_{n\geq 1}n^{-3p/2}}^{1/p}\theta_{\ell,p}.
\end{align} 
In order to treat $Z_{n,\ell}$, we decompose it as $Z'_{n,\ell}+Z''_{n,\ell}$, 
where 
\begin{equation}
Z'_{n,\ell}=\sum_{a=1}^{4\ell+2}\sum_{j=\pr{4\ell+2}\ent{\frac{n}{4\ell+2}   }+1     }^n 
\sum_{k=0}^{\ent{\frac{j-1}{4\ell+2}   }-1}
\pr{H^{\pr{\ell}}_{\pr{4\ell+2}k+a,j}  -\E{H^{\pr{\ell}}_{\pr{4\ell+2}k+a,j}
 \mid V_{j,\ell}} }
\end{equation}
\begin{equation}
Z''_{n,\ell}=\sum_{a=1}^{4\ell+2}\sum_{j=\pr{4\ell+2}\ent{\frac{n}{4\ell+2}   }+1     }^n 
\sum_{k=0}^{\ent{\frac{j-1}{4\ell+2}   }-1}
 \E{H^{\pr{\ell}}_{\pr{4\ell+2}k+a,j}
 \mid V_{j,\ell}} .
\end{equation}
We first use \eqref{eq:norm_lp_faible_supremum} to get 
\begin{align}
\norm{\sup_{n\geq 1}\frac{1}{n^{3/2}\sqrt{\LL{n}}}
 \abs{Z'_{n,\ell} }}_{p,\infty}&\leq 
\pr{\sum_{n\geq 3}\norm{\frac{1}{n^{3/2}\sqrt{\LL{n}}} 
\abs{Z'_{n,\ell} }}_{p,\infty}^p}^{1/p}\\
&\leq \pr{\sum_{N\geq 1} N^{-3p/2}\pr{4\ell+2}^{-3p/2} \pr{\sum_{n=\pr{4\ell+2}N}^{
\pr{4\ell+2}\pr{N+1}-1} \norm{Z'_{n,\ell}}_{p,\infty}   }^p  }^{1/p}
\end{align}
and for all $n$ such that $\pr{4\ell+2}N\leq n\leq \pr{4\ell+2}\pr{N+1}-1$, 
\begin{align}
\norm{Z'_{n,\ell}}_{p,\infty}
&\leq \sum_{a=1}^{4\ell+2}\sum_{j=\pr{4\ell+2}\ent{\frac{n}{4\ell+2}   }+1     }^n 
 \norm{ \sum_{k=0}^{\ent{\frac{n}{4\ell+2}   }-1} 
\pr{H^{\pr{\ell}}_{\pr{4\ell+2}k+a,j}  -\E{H^{\pr{\ell}}_{\pr{4\ell+2}k+a,j}
 \mid V_{j,\ell}} }}_{p,\infty}\\
 &\leq \sum_{a=1}^{4\ell+2}\sum_{j=\pr{4\ell+2}N  +1     }^{\pr{4\ell+2}\pr{N+1}}
 \norm{ \sum_{k=0}^{N-1} 
\pr{H^{\pr{\ell}}_{\pr{4\ell+2}k+a,j}  -\E{H^{\pr{\ell}}_{\pr{4\ell+2}k+a,j}
 \mid V_{j,\ell}} }}_{p,\infty}
\end{align}
hence 
\begin{multline}\label{eq:estimation_fct_max_LLI_R1}
\norm{\sup_{n\geq 1}\frac{1}{n^{3/2}\sqrt{\LL{n}}} \abs{Z'_{n,\ell} }}_{p,\infty}
^p\leq\\ 
 \sum_{N\geq 1} N^{-p-1}\pr{4\ell+2}^{-3p/2} \pr{ 
\sum_{a=1}^{4\ell+2}\sum_{j=\pr{4\ell+2}N  +1     }^{\pr{4\ell+2}\pr{N+1}}
 \norm{ \sum_{k=0}^{N-1} 
\pr{H^{\pr{\ell}}_{\pr{4\ell+2}k+a,j}  -\E{H^{\pr{\ell}}_{\pr{4\ell+2}k+a,j}
 \mid V_{j,\ell}} }}_{p,\infty} 
      }^p   .
\end{multline}
For all fixed $j$, we notice using Lemma~\ref{lem:propriete_esperance_cond_2_tribus}
 that $\pr{H^{\pr{\ell}}_{\pr{4\ell+2}k+a,j}  -\E{H^{\pr{\ell}}_{\pr{4\ell+2}k+a,j}
 \mid V_{j,\ell}} }_{0\leq k\leq N-1}$ is a martingale differences sequence with 
 respect to the filtration $\pr{\Fca_k}_{0\leq k\leq N-1}$ where 
 \begin{equation*}
 \Fca_k:=\sigma\pr{V_{j,\ell}}\vee \sigma\pr{
V_{\pr{4\ell+2}i+a,j},i\leq k} 
 \end{equation*}
 hence 
 \begin{multline}
 \norm{ \sum_{k=0}^{N-1} 
\pr{H^{\pr{\ell}}_{\pr{4\ell+2}k+a,j}  -\E{H^{\pr{\ell}}_{\pr{4\ell+2}k+a,j}
 \mid V_{j,\ell}} }}_{p,\infty}^p\\ \leq 
  \norm{ \sum_{k=0}^{N-1} 
\pr{H^{\pr{\ell}}_{\pr{4\ell+2}k+a,j}  -\E{H^{\pr{\ell}}_{\pr{4\ell+2}k+a,j}
 \mid V_{j,\ell}} }}_{2}^p\\
 \leq \pr{\sum_{k=0}^{N-1} \norm{
\pr{H^{\pr{\ell}}_{\pr{4\ell+2}k+a,j}  -\E{H^{\pr{\ell}}_{\pr{4\ell+2}k+a,j}
 \mid V_{j,\ell}} }}_{2}^2 }^{p/2} 
 \leq   c_pN^{p/2}\theta_{\ell,2}^p
 \end{multline}
and plugging this estimate into \eqref{eq:estimation_fct_max_LLI_R1} gives 
 \begin{multline} 
\norm{\sup_{n\geq 1}\frac{1}{n^{3/2}\sqrt{\LL{n}}}
 \abs{Z'_{n,\ell} }}_{p,\infty}\leq
 C_p\pr{\sum_{N\geq 1} N^{-1-p/2}\pr{4\ell+2}^{-3p/2+2} 
 \pr{4\ell+2}^{2p}  
 N\theta_{\ell,p}^p
       }^{1/p}\\
      \leq C'_p\theta_{\ell,2}\ell^{1/2} .
\end{multline}
We control the contribution of $Z''_{n,\ell}$ by noticing that 
$\E{H^{\pr{\ell}}_{\pr{4\ell+2}k+a,j}
 \mid V_{j,\ell}}$ is actually independent on $k$.

This ends the proof of Lemma~\ref{lem:contribution_R1_LLI}.
 \end{proof}
\subsubsection{Treatment of terms of the form  $\sum_u H^{\pr{\ell}}_{a,\pr{4\ell+2}u+b    }$ and $\sum_u
 H^{\pr{\ell}}_{\pr{4\ell+2}u+a,\pr{4\ell+2} \ent{\frac{n}{4\ell+2}}   +b    }$ }  

\begin{Lemma}\label{lem:contribution_LLI_terms_H_a_lu+b}
For all $\ell\geq 1$ and all $a,b\in [4\ell+2]$, the following inequality holds 
\begin{equation}
\norm{\sup_{n\geq 1}   
\frac 1{n^{3/2}\sqrt{\LL{n}}}\abs{\sum_{u=1}^{\ent{\frac{n}{4\ell+2}}    }H^{\pr{\ell}}_{a,\pr{4\ell+2}u+b    }   }
}_{p,\infty}\leq c_p \pr{4\ell+2}^{-3/2}\theta_{\ell,p}.
\end{equation}
\end{Lemma}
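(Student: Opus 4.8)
The plan is to follow the same three-step scheme that underlies the law-of-large-numbers counterpart Lemma~\ref{lem:contribution_LGN_terms_H_a_lu+b}, replacing the Marcinkievicz normalization $n^{1+1/p}$ by the iterated-logarithm normalization $n^{3/2}\sqrt{\LL n}$ and replacing the i.i.d.\ input Corollary~\ref{cor:MZSSLNiid} by the corresponding bounded law of the iterated logarithm for U-statistics of i.i.d.\ data, taken in the weak-$\el^p$-norm $\norm{\cdot}_{p,\infty}$ (the weak-$\el^p$ version of the inequality recorded after~\eqref{eq:definition_LLI}).

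First I would reduce to the range $0\leq a-b\leq 2\ell$. When $\pr{2\ell+1}\leq a-b\leq \pr{4\ell+2}-1$, I add and subtract $H^{\pr{\ell}}_{a,\pr{4\ell+2}\pr{u+1}+b}$; the telescoping inner sum collapses to two isolated terms $H^{\pr{\ell}}_{a,\cdot}$, each of $\el^p$-norm at most $\theta_{\ell,p}$ and hence harmless once divided by $n^{3/2}\sqrt{\LL n}$, while the remaining sum is of the admissible form after the shift $\widetilde b=b+2\ell+1$. Next, since $\ent{n/\pr{4\ell+2}}$, and therefore the sum over $u$, is constant on each window $\pr{4\ell+2}N\leq n<\pr{4\ell+2}\pr{N+1}$ while the normalizing factor increases, the supremum is attained at $n=\pr{4\ell+2}N$ and I may restrict to these values.

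For such $n$, observation~(3) of the preceding subsection gives $\sum_{u=1}^{N}H^{\pr{\ell}}_{a,\pr{4\ell+2}u+b}=S'_N-S_N$ with the block kernel $W_{i,j}:=H^{\pr{\ell}}_{\pr{4\ell+2}i+a,\pr{4\ell+2}j+b}$. Because consecutive blocks are spaced by $4\ell+2>2\ell+1$, the vectors feeding $W$ in distinct blocks are independent, so $S_N$ and $S'_N$ are centered U-statistics of genuinely i.i.d.\ data whose kernel has $\el^p$-norm at most $2\theta_{\ell,p}$. Invoking the subadditivity of $\norm{\cdot}_{p,\infty}$ I split $S'_N-S_N$ and apply the i.i.d.\ bounded law of the iterated logarithm to each piece. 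Since $L$ is nondecreasing, $n^{3/2}\sqrt{\LL n}=\pr{4\ell+2}^{3/2}N^{3/2}\sqrt{\LL{\pr{4\ell+2}N}}\geq \pr{4\ell+2}^{3/2}N^{3/2}\sqrt{\LL N}$, so the block length contributes exactly the prefactor $\pr{4\ell+2}^{-3/2}$ and the kernel norm contributes $\theta_{\ell,p}$, which is the asserted bound.

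The substantive point, and the only genuinely delicate step, is the i.i.d.\ input: one needs the bounded law of the iterated logarithm for centered (non-degenerate) U-statistics of i.i.d.\ data formulated in $\norm{\cdot}_{p,\infty}$ rather than in $\el^p$, since only the subadditive weak norm can be summed over $\ell$ and over the pairs $\pr{a,b}$ inside the decomposition of Proposition~\ref{prop:Hoeffding_decomposition}. Securing it amounts to rerunning the truncation-and-martingale argument behind Proposition~\ref{prop:MZSSLNiid}, but now producing the $n^{3/2}\sqrt{\LL n}$ scaling coming from the linear part of the Hoeffding decomposition; granting this input, the three reductions above are routine, the monotonicity of the double logarithm making the passage from $\LL n$ to $\LL N$ cost only an absolute constant.
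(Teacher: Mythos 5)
Your two reductions and the identity $\sum_{u=1}^{N}H^{\pr{\ell}}_{a,\pr{4\ell+2}u+b}=S'_N-S_N$ are exactly the paper's, but the step you yourself single out as the crux is a genuine gap, and it cannot be repaired in the form you propose. The input you invoke --- a bounded law of the iterated logarithm for \emph{centered, non-degenerate} U-statistics of i.i.d.\ data, with normalization $N^{3/2}\sqrt{\LL N}$, in weak-$\mathbb L^p$, and with right-hand side $c_p$ times the $\mathbb L^p$-norm of the kernel --- is false for $p<2$. Take $\Phi\pr{x,y}=g\pr{x}+g\pr{y}$ with $g$ centered and $g\in\mathbb L^p\setminus\mathbb L^2$: then $U_N^{\ind}\pr{\Phi,\pr{\xi_i}}=\pr{N-1}\sum_{i=1}^N g\pr{\xi_i}$, and by Strassen's converse to the Hartman--Wintner law of the iterated logarithm, $\sup_N\abs{\sum_{i\leq N}g\pr{\xi_i}}/\sqrt{N\LL N}=+\infty$ almost surely, hence $\sup_N\abs{U_N^{\ind}}/\pr{N^{3/2}\sqrt{\LL N}}=+\infty$ almost surely. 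Finiteness of the variance of the linear part is \emph{necessary}, so no rerun of the truncation-and-martingale argument of Proposition~\ref{prop:MZSSLNiid} can produce your inequality; the true Arcones--Gin\'e-type statement carries the $\mathbb L^2$-norm of the kernel, which would yield $\theta_{\ell,2}$, not the $\theta_{\ell,p}$ you assert (and which the lemma states). Consequently, bounding $S'_N$ and $S_N$ \emph{separately} is fatal here: under the LIL normalization, and with kernels controlled only in $\mathbb L^p$, each of the two suprema is in general almost surely infinite, so the subadditivity step discards precisely the cancellation that makes the lemma true. This loss was affordable in Lemma~\ref{lem:contribution_LGN_terms_H_a_lu+b} only because the Marcinkievicz normalization $N^{1+1/p}$ matches U-statistics with $\mathbb L^p$ kernels, which is exactly what Corollary~\ref{cor:MZSSLNiid} provides; for $p<2$ one has $N^{3/2}\sqrt{\LL N}\ll N^{1+1/p}$, so the LIL statement does not follow from the Marcinkievicz one at the level of the split pieces.

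What your plan misses is that the difference $S'_N-S_N$ is a \emph{single} sum of $N$ identically distributed terms, and relative to $N$ the normalization $N^{3/2}\sqrt{\LL N}$ is far \emph{stronger} than the partial-sum Marcinkievicz normalization: $N^{3/2}\sqrt{\LL N}\geq N^{1/p}$ since $3/2>1\geq 1/p$. This is why the paper replaces Corollary~\ref{cor:MZSSLNiid} by the \emph{partial-sum} maximal inequality of Proposition~\ref{prop:fct_max_LLI_fct_de_ell_iid}, and not by any U-statistic LIL. A correct completion along these lines runs as follows. After your reductions, for $u\geq 1$ and $0\leq a-b\leq 2\ell$ the windows $V_{a,\ell}$ and $V_{\pr{4\ell+2}u+b,\ell}$ are disjoint, and by Lemma~\ref{lem:propriete_esperance_cond_fonction_d_iid} the conditional expectation $\E{H^{\pr{\ell}}_{a,\pr{4\ell+2}u+b}\mid V_{a,\ell}}$ equals one and the same random variable $g\pr{V_{a,\ell}}$ for every $u\geq 1$, with $\norm{g\pr{V_{a,\ell}}}_p\leq 2\theta_{\ell,p}$; its total contribution is $Ng\pr{V_{a,\ell}}$, which after division by $N^{3/2}\sqrt{\LL N}$ has weak-$\mathbb L^p$ norm at most $c\,\theta_{\ell,p}$. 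The conditionally centered parts $H^{\pr{\ell}}_{a,\pr{4\ell+2}u+b}-g\pr{V_{a,\ell}}$ form, by Lemma~\ref{lem:propriete_esperance_cond_2_tribus}, an identically distributed martingale differences sequence with respect to $\Fca_u:=\sigma\pr{V_{a,\ell}}\vee\sigma\pr{\eps_v,\,v\leq\pr{4\ell+2}u+b+\ell}$, so Proposition~\ref{prop:fct_max_LGN_martingales}, combined once more with $N^{3/2}\sqrt{\LL N}\geq N^{1/p}$, bounds their maximal function by $c_p\theta_{\ell,p}$; the prefactor $\pr{4\ell+2}^{-3/2}$ then comes from restricting to $n=\pr{4\ell+2}N$ exactly as in your second step.
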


\begin{Lemma}\label{lem:contribution_LLI_terms_H_a_lu,n+b}
For all $\ell\geq 1$ and all $a,b\in [4\ell+2]$, the following inequality holds 
\begin{equation}
\norm{\sup_{n\geq 1}   
\frac 1{n^{3/2}\sqrt{\LL{n}}}
\abs{\sum_{u=1}^{\ent{\frac{n}{4\ell+2}}    }H^{\pr{\ell}}_{
\pr{4\ell+2}u+a,\pr{4\ell+2}\ent{\frac{n}{4\ell+2}}+b       }}}_{p,\infty}\leq c_p \pr{4\ell+2}^{-1-1/p}\theta_{\ell,p}.
\end{equation}
\end{Lemma}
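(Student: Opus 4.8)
The plan is to treat this column sum exactly as its companion Lemma~\ref{lem:contribution_LLI_terms_H_a_lu+b}: I rewrite it as the increment of a U-statistic of i.i.d. data and then invoke the bounded law of the iterated logarithm in the i.i.d. case. Throughout write $L:=4\ell+2$ and $m:=\ent{n/L}$, so that the quantity under the supremum is $\sum_{u=1}^{m}H^{\pr{\ell}}_{Lu+a,Lm+b}$. First I would reduce to the range $0\leq a-b\leq 2\ell$; when $a-b$ lies outside it, a telescoping/reindexing device (as in the companion lemma and in Lemma~\ref{lem:Hoeffding_decomposition_ell_dependent}) brings the offsets into this range, up to boundary terms of the same type as the main one. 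The purpose of the reduction is that the blocks $V_{Lu+a,\ell}$, $1\leq u\leq m-1$, are then pairwise disjoint and disjoint from the fixed block $V_{Lm+b,\ell}$, since consecutive first indices differ by $L=4\ell+2$, strictly more than the block width $2\ell+1$.

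Next, because $m=\ent{n/L}$ is constant on each interval $Lm\leq n<L\pr{m+1}$ while $n\mapsto n^{-3/2}\pr{\LL n}^{-1/2}$ is nonincreasing, the supremum over all $n$ reduces to a supremum over the multiples $n=LN$; thus it suffices to bound $\norm{\sup_{N\geq 1}\pr{LN}^{-3/2}\pr{\LL{LN}}^{-1/2}\abs{\sum_{u=1}^{N}H^{\pr{\ell}}_{Lu+a,LN+b}}}_{p,\infty}$. Setting $W_{i,j}:=H^{\pr{\ell}}_{Li+a,Lj+b}$ and $S_K:=\sum_{1\leq i<j\leq K}W_{i,j}$, a direct computation gives $\sum_{u=1}^{N}W_{u,N}=S_N-S_{N-1}+W_{N,N}$. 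By the disjointness just arranged, $S_N$ and $S_{N-1}$ are centred U-statistics of the i.i.d. blocks $\eps^{a,b}_u$, exactly as in the Hoeffding decomposition of Lemma~\ref{lem:Hoeffding_dec_fct_ell_dep}, and the $\mathbb L^2$-norm of the associated kernel satisfies $\norm{W_{1,2}}_2=\norm{H^{\pr{\ell}}_{L+a,2L+b}}_2\leq 2\theta_{\ell,2}$.

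I then pass from the increment to the maximal function: from $\abs{S_N-S_{N-1}}\leq\abs{S_N}+\abs{S_{N-1}}$ and the slow variation of $\LL\cdot$ (so $\pr{N-1}^{3/2}\sqrt{\LL{N-1}}\geq cN^{3/2}\sqrt{\LL N}$ for $N\geq 2$),
\[
\sup_{N\geq 1}\frac{\abs{S_N-S_{N-1}}}{\pr{LN}^{3/2}\sqrt{\LL{LN}}}\leq\frac{c}{L^{3/2}}\sup_{M\geq 1}\frac{\abs{S_M}}{M^{3/2}\sqrt{\LL M}}.
\]
The i.i.d. bounded law of the iterated logarithm for U-statistics (Theorem~2.5 in \cite{MR1348376}) bounds the $\mathbb L^p$-norm of the right-hand maximal function by $C_p\norm{W_{1,2}}_2\leq 2C_p\theta_{\ell,2}$, and since $\norm{\cdot}_{p,\infty}\leq\norm{\cdot}_p$ this contributes the factor $L^{-3/2}\theta_{\ell,2}$. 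The diagonal term is handled through \eqref{eq:norm_lp_faible_supremum} (and the equivalence of the two weak norms): $\norm{\sup_N\pr{LN}^{-3/2}\pr{\LL{LN}}^{-1/2}\abs{W_{N,N}}}_{p,\infty}^p\leq c\sum_{N\geq 1}\pr{LN}^{-3p/2}\pr{2\theta_{\ell,p}}^p$ converges (as $3p/2>1$) and is again $O\pr{L^{-3/2}\theta_{\ell,p}}$. Collecting the two contributions yields a bound of order $\pr{4\ell+2}^{-3/2}\theta_{\ell,2}$, the power of $4\ell+2$ coming from the $\pr{LN}^{3/2}$ normalisation and the dependence coefficient from the $\mathbb L^2$-norm of the kernel; summed over the $\pr{4\ell+2}^2$ pairs $\pr{a,b}$ this reproduces precisely the term $\ell^{1/2}\theta_{\ell,2}$ of Theorem~\ref{thm:LLI_Bernoulli}, which is the estimate the lemma is used for.

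The step I expect to be the main obstacle is the first one: making the telescoping reduction precise and verifying that, once $0\leq a-b\leq 2\ell$, the blocks feeding $S_K$ are genuinely independent, so that the i.i.d. law of the iterated logarithm indeed applies. Tracking the boundary and diagonal terms created by the reindexing, and checking that each of them carries the same decay in $4\ell+2$, is the only genuinely delicate bookkeeping; everything else is a direct transcription of the argument for the Marcinkievicz-type Lemma~\ref{lem:contribution_LGN_terms_H_a_lu,n+b}, with the iterated-logarithm normalisation and the $\mathbb L^2$-dependence coefficient replacing the $\mathbb L^p$ ones.
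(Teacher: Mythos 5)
Your skeleton is the paper's own: the paper proves this lemma by referring back to the three observations used for its Marcinkiewicz analogues (reduce by telescoping to $0\leq a-b\leq 2\ell$, so that the relevant blocks become genuinely independent; restrict the supremum to $n$ a multiple of $4\ell+2$; write the column sum as a difference $S_N-S_{N-1}$ of U-statistics of the i.i.d.\ blocks $\eps_u^{a,b}$), and you carry out exactly these three steps --- in fact more carefully, since you retain the diagonal term $W_{N,N}$ produced by the identity $\sum_{u=1}^{N}W_{u,N}=S_N-S_{N-1}+W_{N,N}$, which the paper's formula $S_N-S_{N-1}$ silently drops, and you dispose of it correctly via \eqref{eq:norm_lp_faible_supremum}. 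The genuine divergence is the concluding maximal inequality. The paper instructs the reader to replace Corollary~\ref{cor:MZSSLNiid} by Proposition~\ref{prop:fct_max_LLI_fct_de_ell_iid}; but that proposition is a LIL-type bound for \emph{partial sums} of functionals of $2\ell+1$ i.i.d.\ variables, and it does not apply as stated to the U-statistics $S_N$ --- to use it one would first have to apply Hoeffding's decomposition to $S_N$ and treat the degenerate part by some other device. You instead invoke the bounded LIL for U-statistics of i.i.d.\ data (Theorem~2.5 of \cite{MR1348376}, as quoted in Subsection~\ref{subsec:context}), which applies to $S_N$ directly and is arguably the more apt completion of the paper's two-sentence sketch. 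The one caveat in your route is that this inequality is quoted for \emph{symmetric} kernels, whereas your kernel $W_{i,j}=H^{\pr{\ell}}_{\pr{4\ell+2}i+a,\pr{4\ell+2}j+b}$ is not symmetric in its two block arguments (the two sub-windows sit at different offsets inside the blocks); a word on symmetrization or decoupling, or an appeal to a version of the inequality for general centered kernels, is needed here. This issue is invisible in the law-of-large-numbers case precisely because Corollary~\ref{cor:MZSSLNiid} requires no symmetry.

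Note also that what you actually prove is a bound $c_p\pr{4\ell+2}^{-3/2}\theta_{\ell,2}$ (plus a diagonal contribution of order $\pr{4\ell+2}^{-3/2}\theta_{\ell,p}$), not the literal right-hand side $c_p\pr{4\ell+2}^{-1-1/p}\theta_{\ell,p}$ of the statement, which is strictly smaller since $1+1/p>3/2$ and $\theta_{\ell,p}\leq\theta_{\ell,2}$. This is a defect of the statement rather than of your argument: the stated bound is inconsistent with the companion Lemma~\ref{lem:contribution_LLI_terms_H_a_lu+b} (whose right-hand side carries $\pr{4\ell+2}^{-3/2}$), the paper's own LLI computations (see the end of the proof of Lemma~\ref{lem:contribution_R1_LLI}) likewise produce $\theta_{\ell,2}$-bounds despite $\theta_{\ell,p}$ appearing in the statements, and after summation over the $\pr{4\ell+2}^2$ pairs $\pr{a,b}$ your estimate yields exactly the $\ell^{1/2}\theta_{\ell,2}$ term required by Theorem~\ref{thm:LLI_Bernoulli}, as you observe.
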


The proof follows exactly the same idea as the proof of 
Lemmas~\ref{lem:contribution_LGN_terms_H_a_lu+b} and 
\ref{lem:contribution_LGN_terms_H_a_lu,n+b}, where the use of 
Corollary~\ref{cor:MZSSLNiid} is replaced by that of 
Proposition~\ref{prop:fct_max_LLI_fct_de_ell_iid}.

\subsubsection{Treatment of terms of the form  $\sum_u
 H^{\pr{\ell}}_{\pr{4\ell+2}u+a,\pr{4\ell+2}u+b    }$ }  
 
 \begin{Lemma}\label{lem:contribution_LLI_terms_H_lu+a_lu+b}
For all $\ell\geq 1$ and all $a,b\in [4\ell+2]$, the following inequality holds 
\begin{equation}
\norm{\sup_{n\geq 1}   
\frac 1{n^{3/2}}\abs{\sum_{u=1}^{\ent{\frac{n}{4\ell+2}}    }H^{\pr{\ell}}_{
\pr{4\ell+2}u+a,\pr{4\ell+2}u+b    }   }}_{p,\infty}\leq c_p \pr{4\ell+2}^{-3/2}\theta_{\ell,p}.
\end{equation}
\end{Lemma}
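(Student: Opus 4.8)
The plan is to exploit the block structure already announced after Lemma~\ref{lem:contribution_LGN_terms_H_lu+a_lu+b}: the diagonal sequence
\begin{equation*}
W_u:=H^{\pr{\ell}}_{\pr{4\ell+2}u+a,\pr{4\ell+2}u+b},\qquad u\geq 1,
\end{equation*}
is stationary, centered and two-dependent, so the whole estimate reduces to a maximal inequality for partial sums of i.i.d.\ data. First I would remove the floor and the intermediate values of $n$: for every $n$ the inner sum equals $S_M:=\sum_{u=1}^M W_u$ with $M=\ent{n/\pr{4\ell+2}}$, and $S_M$ is constant on each level set $\ens{n:\ent{n/\pr{4\ell+2}}=M}$, on which $n\mapsto n^{-3/2}$ is largest at the left endpoint $n=\pr{4\ell+2}M$. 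Hence
\begin{equation*}
\sup_{n\geq 1}\frac1{n^{3/2}}\abs{\sum_{u=1}^{\ent{n/\pr{4\ell+2}}}W_u}=\pr{4\ell+2}^{-3/2}\sup_{M\geq 1}\frac1{M^{3/2}}\abs{S_M},
\end{equation*}
which already isolates the prefactor $\pr{4\ell+2}^{-3/2}$; it remains to bound $\norm{\sup_{M\geq 1}M^{-3/2}\abs{S_M}}_{p,\infty}$ by a constant times $\theta_{\ell,2}$.

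Next I would make the two-dependence quantitative. The variable $W_u$ is a function of $\pr{\eps_k}$ for $k$ in the window $\llbracket \pr{4\ell+2}u+1-\ell,\pr{4\ell+2}u+5\ell+2\rrbracket$, whose length $6\ell+2$ is strictly smaller than $2\pr{4\ell+2}$ for every $\ell\geq 1$; consequently $W_u$ and $W_{u'}$ are independent as soon as $\abs{u-u'}\geq 2$. Splitting according to the parity of $u$, write $S_M=A_M+B_M$ with $A_M=\sum_{u\leq M,\,u\text{ even}}W_u$ and $B_M=\sum_{u\leq M,\,u\text{ odd}}W_u$; then each of the two families $\pr{W_{2v}}_v$ and $\pr{W_{2v+1}}_v$ is i.i.d.\ and centered, and stationarity together with the definition \eqref{eq:definition_de_la_mesure_de_dependence} of $\theta_{\ell,2}$ gives $\norm{W_u}_2\leq 2\theta_{\ell,2}$.

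It then suffices to treat the even part $A_M=\widetilde S_{\ent{M/2}}$, where $\widetilde S_K:=\sum_{v=1}^K W_{2v}$, the odd part being identical. Reducing once more to even $M$ yields $\sup_M M^{-3/2}\abs{A_M}=2^{-3/2}\sup_K K^{-3/2}\abs{\widetilde S_K}$. Since $\LL{x}$ grows more slowly than any positive power, $M^{-3/2}\leq M^{-1/2}\pr{\LL{M}}^{-1/2}$ for all $M\geq 1$, so the $n^{3/2}$ normalization dominates the bounded law of the iterated logarithm scale and, remarkably, no $\sqrt{\LL{n}}$ factor is needed here. Applying the i.i.d.\ maximal inequality of Proposition~\ref{prop:fct_max_LLI_fct_de_ell_iid} to $\pr{W_{2v}}_v$,
\begin{equation*}
\norm{\sup_{K\geq 1}\frac{\abs{\widetilde S_K}}{K^{3/2}}}_{p}\leq \norm{\sup_{K\geq 1}\frac{\abs{\widetilde S_K}}{\sqrt{K}\sqrt{\LL{K}}}}_{p}\leq c_p\norm{W_2}_2\leq c_p\theta_{\ell,2},
\end{equation*}
and combining the two parities with the equivalence $\norm{\cdot}_{p,\infty}\leq C_p\norm{\cdot}_{p}$ and the reduction of the first step gives the announced inequality.

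The step I expect to require the most care is the two-dependence count, since the block length $4\ell+2=2\pr{2\ell+1}$ is tuned precisely so that, after accounting for the two windows $V_{\cdot,\ell}$ of radius $\ell$ centered at $\pr{4\ell+2}u+a$ and $\pr{4\ell+2}u+b$ with $a,b\in[4\ell+2]$, two blocks at distance at least $2$ share no coordinate $\eps_k$; getting the endpoint $\ell=1$ right in this window arithmetic is the only place where the inequality could fail. The other genuinely constraining point is that the i.i.d.\ bounded-LIL maximal inequality forces the bound to be expressed through $\theta_{\ell,2}$ rather than through a $\theta_{\ell,p}$ with $p<2$: the iterated-logarithm normalization is an $\mathbb L^2$ phenomenon, and for $1\leq p<2$ the series $\sum_K K^{-3p/2}\norm{\widetilde S_K}_{p,w}^p$ converges only thanks to the $\sqrt K$ growth of $\norm{\widetilde S_K}$ afforded by a finite second moment, which is also the coefficient $\theta_{\ell,2}$ that ultimately enters Theorem~\ref{thm:LLI_Bernoulli}.
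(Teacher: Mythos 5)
Your preparatory reductions are correct, and they are exactly what the paper's one-sentence proof (``two-dependence'') intends: the rescaling identity
\begin{equation*}
\sup_{n\geq 1}\frac 1{n^{3/2}}\abs{\sum_{u=1}^{\ent{n/\pr{4\ell+2}}}W_u}
=\pr{4\ell+2}^{-3/2}\sup_{M\geq 1}\frac 1{M^{3/2}}\abs{S_M},
\end{equation*}
the window arithmetic showing that $W_u$ and $W_{u'}$ are independent as soon as $\abs{u-u'}\geq 2$, the parity splitting into two i.i.d.\ centered subsequences, and the bound $\norm{W_u}_p\leq 2\theta_{\ell,p}$ coming from stationarity and the definition \eqref{eq:definition_de_la_mesure_de_dependence}.

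The final step, however, has a genuine gap. By routing the estimate through the $\sqrt{K\,\LL{K}}$ normalization and the $\mathbb L^2$-based maximal inequality (Pisier, i.e.\ the i.i.d.\ case of Proposition~\ref{prop:fct_max_LLI_fct_de_ell_iid}), you end up with $c_p\,\theta_{\ell,2}$ on the right-hand side, whereas the lemma claims $c_p\,\theta_{\ell,p}$. Since $\theta_{\ell,p}\leq\theta_{\ell,2}$ for $p<2$, your inequality is strictly weaker than the stated one; worse, it can be vacuous: the lemma only presupposes finiteness of $\theta_{\ell,p}$, and $\theta_{\ell,2}$ may well be infinite, in which case your chain of inequalities passes through an infinite quantity while the claimed bound is finite. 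Your closing assertion that the bound is ``forced'' to be expressed through $\theta_{\ell,2}$ is therefore backwards. The correct conclusion --- and the one matching the paper's treatment of the twin Lemma~\ref{lem:contribution_LGN_terms_H_lu+a_lu+b} --- is to observe that $p\geq 1$ gives $1/p\leq 1<3/2$, hence $M^{-3/2}\leq M^{-1/p}$ for all $M\geq 1$, and then to apply the Marcinkiewicz--Zygmund-type maximal inequality of Proposition~\ref{prop:fct_max_LGN_martingales} to each i.i.d.\ parity subsequence (for $p=1$, the weak $(1,1)$ maximal bound for i.i.d.\ averages plays the same role). This yields $\norm{\sup_{M\geq 1}M^{-3/2}\abs{S_M}}_{p,\infty}\leq\norm{\sup_{M\geq 1}M^{-1/p}\abs{S_M}}_{p,\infty}\leq c_p\norm{W_1}_p\leq 2c_p\,\theta_{\ell,p}$, as required. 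In other words, no iterated-logarithm input is needed for this diagonal term: the normalization $n^{3/2}$ already dominates $n^{1/p}$, and only $p$-th moment information should enter; the coefficients $\theta_{\ell,2}$ enter Theorem~\ref{thm:LLI_Bernoulli} through the other remainder terms, not through this one.
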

This follows from the fact 
that $\pr{H^{\pr{\ell}}_{\pr{4\ell+2}u+a,\pr{4\ell+2}u+b}   }_{u\geq 1}$
forms a two-dependent sequence.

\subsection{Proof of Theorem~\ref{thm:TLC}}
  
Let us explain the idea of the proof. The convergence is essentially 
due to the partial sums of strictly stationnary sequence $\pr{Y_k}_{k\geq 1}$, 
where 
\begin{equation}
Y_k= Y_{k,0}+\sum_{\ell \geq 1}Y_{k,\ell}-Y_{k,\ell-1}
\end{equation}   
and 
\begin{equation}
Y_{k,\ell}:=\E{h\pr{f_\ell\pr{V_{k,\ell}},f_{\ell}\pr{  V'_{0,\ell}  }} \mid V_{k,\ell}  }
-\E{h\pr{f_\ell\pr{V_{k,\ell}},f_{\ell}\pr{  V'_{0,\ell}  }}   }.
\end{equation}
We can establish the convergence of $\pr{n^{-1/2}\sum_{k=1}^nY_k}_{n\geq 1}$
by showing the convergence of   $\pr{n^{-1/2}\sum_{k=1}^nY_{k,L}}_{n\geq 1}$
  for a fixed $L$ and by controlling the remainder. We have to prove that all the terms
  in the decomposition obtained in Proposition~\ref{prop:Hoeffding_decomposition} 
  converge to zero in probability. To sum up, we start by writing 
  \begin{multline}
 \frac 1{n^{3/2}} U_n\pr{h,f,\pr{\eps_i}_{i\in\Z}}-\E{U_n\pr{h,f,\pr{\eps_i}_{i\in\Z}}}=\frac 1{\sqrt n}\sum_{k=1}^nY_k+ 
U^{\ind}_n\pr{h^{\pr{0}},\pr{\eps_i}_i}+
 \\
+\frac 1{n^{3/2}}\sum_{\ell\geq 1}\sum_{a,b\in [4\ell+2]} U_{\ent{\frac{n}{2\ell}}   }^{\ind}\pr{h_{a,b}^{\pr{\ell}}    ,
\pr{\eps_i^{a,b}}   }+R_{n,1,1}+R_{n,1,2}+
\sum_{k=2}^7R_{n,k},
\end{multline}
where for each $\ell\geq 1$ and all  $a,b\in [4\ell+2]$, 
the U-statistic  $U_{\ent{\frac{n}{2\ell}}   }^{\ind}\pr{h_{a,b}^{\pr{\ell}}    
,\pr{\eps_i^{a,b}}   }$ has independent data and 
is degenerated, and the remainder terms are defined as 
\begin{equation}
R_{n,1,1}:= \frac 1{n^{3/2}}\sum_{\ell\geq 1}\sum_{j=\pr{4\ell+2}\ent{\frac{n}{4\ell+2}   }+1     }^n 
\sum_{i=\pr{4\ell+2}\ent{\frac{j-1}{4\ell+2}   }+1 }^{j-1}H^{\pr{\ell}}_{i,j}
\end{equation}
\begin{equation}
R_{n,1,2}:=\frac 1{n^{3/2}} \sum_{\ell\geq 1}
\sum_{a=1}^{4\ell+2}\sum_{j=\pr{4\ell+2}\ent{\frac{n}{4\ell+2}   }+1     }^n 
\sum_{k=0}^{\ent{\frac{j-1}{4\ell+2}   }-1}H^{\pr{\ell}}_{\pr{4\ell+2}k+a,j};
\end{equation}
\begin{equation}
R_{n,2}:=\frac 1{n^{3/2}}\sum_{\ell\geq 1}\sum_{u=0}^{\ent{\frac{n}{4\ell+2}}-1}
\sum_{\substack{ a,b\in [ 4\ell+2]\\ a<b  }   }H^{\pr{\ell}}_{u\pr{4\ell+2}+a,u\pr{4\ell+2}+b}
\end{equation}
 \begin{equation}
R_{n,3}:=\frac 1{n^{3/2}} \sum_{\ell\geq 1}\sum_{v=1}^{\ent{\frac{n}{4\ell+2}}-1}
\sum_{\substack{ a,b\in [ 4\ell+2]\\ 0\leq a-b\leq \pr{2\ell+1}-1   }   }
\pr{H^{\pr{\ell}}_{ a,v\pr{4\ell+2}+b}+H^{\pr{\ell}}_{ b,v\pr{4\ell+2}+a}}
\end{equation}
 \begin{equation}
R_{n,4}= \frac 1{n^{3/2}}\sum_{\ell\geq 1}\sum_{\substack{ a,b\in [ 4\ell+2]\\ \pr{2\ell+1}\leq a-b\leq  \pr{4\ell+2}-1   }   } \sum_{v=1}^{\ent{\frac{n}{4\ell+2}}-1} 
\pr{ H^{\pr{\ell}}_{ a,\pr{v+1}\pr{4\ell+2}+b}  
+H^{\pr{\ell}}_{ \pr{4\ell+2}+b,v\pr{4\ell+2}+a}}
\end{equation}
 \begin{equation}
R_{n,5}=\frac 1{n^{3/2}} \sum_{\ell\geq 1}\sum_{\substack{ a,b\in [ 4\ell+2]\\ \pr{2\ell+1}\leq a-b\leq  \pr{4\ell+2}-1   }   } \sum_{ u=0}^{ \ent{\frac{n}{4\ell+2}}-2} 
\pr{H^{\pr{\ell}}_{u\pr{4\ell+2}+a,\pr{u+1}\pr{4\ell+2}+b}-H^{\pr{\ell}}_{u\pr{4\ell+2}+a,2m\pr{2\ell+1}+b}}
\end{equation}
 \begin{equation}
R_{n,6}= \frac 1{n^{3/2}} \sum_{\ell\geq 1}\sum_{\substack{ a,b\in [ 4\ell+2]
\\ \pr{2\ell+1}\leq a-b\leq  \pr{4\ell+2}-1   }   } 
\sum_{v=1}^{\ent{\frac{n}{4\ell+2}}-1} 
\pr{ H^{\pr{\ell}}_{ b,v\pr{4\ell+2}+a}- H^{\pr{\ell}}_{v\pr{4\ell+2}+b,v\pr{4\ell+2}+a}},
\end{equation} 
\begin{multline}
R_{n,7}:=  \frac 1{n^{3/2}}\sum_{\ell \geq1} R_{n,7}^{\pr{\ell}},\\ 
R_{n,7}^{\pr{\ell}}:=
\pr{4\ell+1} \ent{\frac{n}{4\ell+1}}
 \sum_{k=1}^{\pr{4\ell+2} \ent{\frac{n}{4\ell+2}}+1 }
\pr{Y_{k,\ell}-Y_{k,\ell-1}}-n\sum_{k=1}^n\pr{Y_{k,\ell}-Y_{k,\ell-1}}
\end{multline}

with 
\begin{multline}
H^{\pr{\ell}}_{i,j}:= 
h\pr{f_\ell\pr{V_{i,\ell}},f_\ell\pr{V_{j,\ell}}}
-\E{h\pr{f_\ell\pr{V_{i,\ell}},f_\ell\pr{V_{j,\ell}}}}\\-
\pr{h\pr{f_{\ell-1}\pr{V_{i,\ell-1}},f_{\ell-1}\pr{V_{j,\ell-1}}}
-\E{h\pr{f_{\ell-1}\pr{V_{i,\ell-1}},f_{\ell-1}\pr{V_{j,\ell-1}}}}}.
\end{multline}
We then follow the steps~:
\begin{enumerate}
\item we show that $\pr{n^{-1/2}\sum_{k=1}^nY_k}_{n\geq 1}$ converges 
to a centered normal distribution with variance 
\item We show the convergence in probability to zero of all the terms 
$R_{n,1,1}$, $R_{n,1,2}$, $R_{n,k}$, $2\leq k\leq 7$.
\end{enumerate}

\subsubsection{Convergence of $\pr{n^{-1/2}\sum_{k=1}^nY_k}_{n\geq 1}$}  

We use Theorem 4.2 in \cite{MR0233396}, which states the following.
For $L,n\in\mathbb N$, $Z_n,Z_{n, L}$, $W_L$ and $Z$ are real-valued random variables defined on a common probability space $\left(\Omega,\mathcal F,\PP\right)$. We assume that 
\begin{enumerate}
\item for all $L\in\mathbb N$, $Z_{n,L}\to W_L$ in distribution as $n\to \infty$;
\item $W_L\to Z$ in distribution as $L\to \infty$, and 
\item for each $\varepsilon>0$, $\lim_{L\to \infty}\limsup_{n\to \infty}\PP\left\{\abs{Z_{n,L}-Z_n}>\varepsilon\right\}=0$.
\end{enumerate}
Then $Z_n\to Z$ in distribution as $n\to \infty$.

We will apply the result in the following setting: 
\begin{equation}
Z_{n,L}:=\frac 1{\sqrt n}\sum_{k=1}^nY_{k,L},
\end{equation}
$W_L$ a centered normal variable with variance 
$\sigma_L^2:=\sum_{k=-L}^L\operatorname{Cov}
\pr{Y_{0,L},Y_{k,L}}$ 
and $W$ a centered normal variable with variance $\sigma^2 
=\sum_{k\in\Z}\operatorname{Cov}
\pr{Y_{0},Y_{k}}$. 

The first item follows from the central limit theorem for $\pr{2L+1}$-dependent 
random variables; the second one from the convergence of $\pr{\sigma_L^2}_{L\geq 1}$ 
to $\sigma^2$, which can be seen by writing 
$Y_{k,L}-Y_k=\sum_{\ell\geq L}\pr{Y_{k,\ell}-Y_{k,\ell+1}}$ and using 
Cauchy-Schwarz inequality:
\begin{multline}
 \abs{\sum_{k=-L}^L\operatorname{Cov}\pr{Y_{0,L},Y_{k,L}} 
 - \operatorname{Cov}\pr{Y_{0 },Y_{k}}        }\leq 
 \sum_{k=-L}^L\abs{\operatorname{Cov}\pr{Y_{0,L},Y_{k,L}-Y_k}       }+
 \sum_{k=-L}^L\abs{\operatorname{Cov}\pr{Y_{0,L}-Y_0,Y_{k}}} \\
 \leq 2L\norm{Y_{0,L}}_2\sum_{\ell\geq L}\theta_{\ell,2}
 +2L\norm{Y_{k}}_2\sum_{\ell\geq L}\theta_{\ell,2}
\end{multline}
and the quantities $\norm{Y_{0,L}}_2$ and $\norm{Y_{k}}_2$ 
are bounded independently of $L$ and $k$.

For the third item, we start from Chebytchev's inequality:
\begin{equation}
\PP\left\{\abs{Z_{n,L}-Z_n}>\varepsilon\right\}\leq 
\eps^{-2}\norm{Z_{n,L}-Z_n}_2^2
\end{equation}
and 
\begin{equation}
\norm{Z_{n,L}-Z_n}_2\leq \sum_{\ell\geq L}
\norm{\frac 1{\sqrt n} \sum_{k=1}^n \pr{Y_{k,\ell}-Y_{k,\ell-1}}   }_2
\end{equation}
and $\norm{\frac 1{\sqrt n} \sum_{k=1}^n \pr{Y_{k,\ell}-Y_{k,\ell-1}}   }_2$ 
does not exceed a constant times $\ell^{1/2} \theta_{\ell,2}$ hence 
\begin{equation}
\limsup_{n\to +\infty}\PP\left\{\abs{Z_{n,L}-Z_n}>\varepsilon\right\}\leq 
\eps^{-2}\pr{\sum_{\ell\geq L} \ell^{1/2} \theta_{\ell,2}    }^2.
\end{equation}

\subsubsection{Convergence in probability of $R_{n,1,1}$, $R_{n,1,2}$ 
and $R_{n,k}$, $2\leq k\leq 6$
to $0$}

Observe that 
\begin{equation}
\E{\abs{R_{n,1,1}}}\leq \frac 1{n^{3/2}}\sum_{\ell\geq 1}\sum_{j=\pr{4\ell+2}\ent{\frac{n}{4\ell+2}   }+1     }^n 
\sum_{i=\pr{4\ell+2}\ent{\frac{j-1}{4\ell+2}   }+1 }^{j-1}\norm{H^{\pr{\ell}}_{i,j}}_1,
\end{equation}
that $\norm{H^{\pr{\ell}}_{i,j}}_1\leq \theta_{\ell,1}$ and that the 
number of terms in the two inner sums is of order $\ell^2$ hence 
\begin{equation}
\E{\abs{R_{n,1,1}}}\leq \frac 1{n^{3/2}} \sum_{\ell\geq 1}\ell^2\theta_{\ell,1}.
\end{equation}

We use the same method for the terms $R_{n,1,2}$ and $R_{n,k}$, $2\leq k\leq 6$.

\subsubsection{Treatment of $R_{n,7}$} 

First we rewrite $R_{n,7}$ as a double sum, namely, 
as
\begin{equation}
R_{n,7}=\frac 1{n^{3/2}}\sum_{\ell\geq 1}\sum_{k\geq 1}
\pr{Y_{k,\ell}-Y_{k,\ell-1}}c_{n,k,\ell}
\end{equation}
where 
\begin{equation}
c_{n,k,\ell}= \pr{4\ell+2}\ent{\frac{n}{4\ell+2}}\left[k\leq  \pr{4\ell+2}\ent{\frac{n}{4\ell+2}}+1     \right]-n \left[k\leq n\right],
\end{equation}
with the notation $[P]=1$ if the assertion $P$ holds and $0$ otherwise.
Write $n$ as $\pr{4\ell+2}N+r$, where $0\leq r\leq 4\ell+1$. Then 
\begin{equation}
c_{n,k,\ell}= -q \left[k\leq  N\pr{4\ell+2}+1     \right]-n \left[N\pr{4\ell+2}
   \leq k\leq n\right],
\end{equation}
hence 
\begin{equation}
\abs{R_{n,7}}\leq 
\frac 1{n^{3/2}}\sum_{\ell\geq 1}\ell \abs{
\sum_{k=1}^{\pr{4\ell+2}\ent{\frac{n}{4\ell+2}}+1}
\pr{Y_{k,\ell}-Y_{k,\ell-1}}}   
+\frac 1{n^{1/2}}  \sum_{\ell\geq 1}
 \abs{
\sum_{\pr{4\ell+2}\ent{\frac{n}{4\ell+2}}+1}^{n}
\pr{Y_{k,\ell}-Y_{k,\ell-1}}}
\end{equation}
and taking the expectation give that 
\begin{equation}
\norm{R_{n,7}}_1\leq 2n^{-1/2}\sum_{\ell\geq 1}\ell\theta_{\ell,1}.
\end{equation}
Acknowledgement: this research supported by the German National Academic Foundation and Collaborative Research Center SFB 823   "Statistical modelling of nonlinear dynamic processes".
\begin{appendices}
\section{Appendix}

In this appendix, we collect some fact about partial sums of martingales or 
functional of independent sequences that we will need in the proof. 

The first is a probability inequatlity for martingales;
 Proposition\ref{prop:fct_max_LGN_martingales} and\ref{prop:fct_max_LGN_fct_de_ell_iid} give a control of the maximal function 
 involved in the strong law of large numbers, respectively for martingales 
 and functionals of a fixed number of i.i.d. random variables. We end 
 the Appendix by two lemmas on conditional expectation.
   
 \begin{Proposition}[Theorem 1.3 
 in \cite{MR4046858}]\label{prop:inegalite_deviation_suites_iid}
 Let $1<p<2$ and $q>p$. Then there exists constants $c_1$ and $c_2$ depending only on 
 $\ens{p,q}$ such that if $\pr{d_i}_{i= 1}^n$ is a martingale differences sequence 
 with respect to a filtration $\pr{\mathcal F_i}_{i=1}^n$, then for each integer $n$ 
 and each positive $x$, 
 \begin{multline}
 \PP\ens{\abs{\sum_{i=1}^nd_i}>x }\leq c_1\sum_{i=1}^n\int_0^1
 \PP\ens{\abs{d_i}> xuc_2 }u^{q-1}\mathrm du\\
+ c_1 \int_0^1
 \PP\ens{\pr{\sum_{i=1}^n\E{\abs{d_i}^p\mid\mathcal F_{i-1}}}^{1/p}    > xuc_2 }u^{q-1}\mathrm du.
 \end{multline}
 \end{Proposition}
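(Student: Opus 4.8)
The statement is a Fuk--Nagaev type deviation inequality for martingales; since it is quoted as an external result one may simply invoke \cite{MR4046858}, but a self-contained argument runs as follows. The plan is to recognise the right-hand side as a pair of \emph{truncated} $q$-th moments and to produce them by a truncation argument integrated over the truncation scale. For a nonnegative random variable $Y$ and $A>0$, the substitution $t=Au$ gives
\[
\int_0^1\PP\ens{Y>Au}u^{q-1}\,\mathrm du=\frac1{A^q}\int_0^A\PP\ens{Y>t}t^{q-1}\,\mathrm dt=\frac1{qA^q}\E{\min\pr{Y,A}^q},
\]
so with $A=c_2x$ the two terms on the right measure the truncated $q$-th moments of the increments $\abs{d_i}$ and of the functional $W:=\pr{\sum_{i=1}^n\E{\abs{d_i}^p\mid\mathcal F_{i-1}}}^{1/p}$. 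The role of the integral (equivalently, of the averaging) is exactly that a single truncation scale is too lossy.

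First I would fix a scale $y>0$ and split each increment as $d_i=d_i\mathbf 1\ens{\abs{d_i}\leq y}+d_i\mathbf 1\ens{\abs{d_i}>y}$; re-centering both pieces by their conditional expectations preserves the martingale property and writes $\sum_i d_i=M_n^{\le}+M_n^{>}$, where $M_n^{\le}$ has increments bounded by $2y$. Two elementary but crucial pointwise bounds drive everything: since $1<p<2$,
\[
\sum_{i=1}^n\E{d_i^2\mathbf 1\ens{\abs{d_i}\leq y}\mid\mathcal F_{i-1}}\leq y^{2-p}W^p,\qquad \sum_{i=1}^n\E{\abs{d_i}\mathbf 1\ens{\abs{d_i}>y}\mid\mathcal F_{i-1}}\leq y^{1-p}W^p.
\]
The first controls the conditional variance of $M_n^{\le}$, the second the predictable compensator of the large jumps. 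On $\ens{\max_i\abs{d_i}\leq y}$ the genuine large jumps vanish, so they contribute only through $\ens{\max_i\abs{d_i}>y}$, bounded by a union bound by $\sum_i\PP\ens{\abs{d_i}>y}$, and through the compensator, controlled on $\ens{W\leq y}$ by $y^{1-p}y^p=y$.

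It then remains to bound $M_n^{\le}$. On $\ens{W\leq y}$ its conditional variance is at most $y^2$, so Freedman's inequality bounds $\PP\ens{\abs{M_n^{\le}}>x/2}$ on this event by $2\exp\pr{-x^2/\pr{c\pr{y^2+yx}}}$. Collecting the three contributions gives, for every $y>0$,
\[
\PP\ens{\Big|\sum_{i=1}^n d_i\Big|>x}\leq \sum_{i=1}^n\PP\ens{\abs{d_i}>y}+\PP\ens{W>y}+2\exp\pr{-\frac{x^2}{c\pr{y^2+yx}}}.
\]
Taking $y=c_2xu$, multiplying by $u^{q-1}$, integrating over $u\in(0,1)$, and using $\int_0^1u^{q-1}\,\mathrm du=1/q$ on the (constant) left-hand side reproduces exactly the two integral terms of the statement, up to the integral of the exponential remainder.

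The hard part is precisely this exponential remainder: integrated naively against $u^{q-1}\,\mathrm du$ with $y\asymp x$ it yields an $x$-independent constant, which is too crude. Making it genuinely negligible requires decoupling the truncation scale $y$ from the variance threshold and optimising them against one another, so that the Gaussian term is dominated by the two power-law integrals; this optimisation is the technical heart of \cite{MR4046858}. The hypothesis $q>p$ enters here, ensuring convergence of the smoothing integrals and the correct truncated-moment normalisation, and it is exactly this freedom in the exponent (later specialised to $q=2p$ in the proof of Proposition~\ref{prop:MZSSLNiid}) that the formulation is designed to exploit.
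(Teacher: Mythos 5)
The paper offers no proof of this proposition at all---it is imported verbatim as Theorem~1.3 of \cite{MR4046858}---so your opening move of simply invoking that reference is exactly the paper's own treatment, and nothing more is required. Your supplementary Fuk--Nagaev-type sketch is a faithful reconstruction of the standard route to such inequalities (truncation at scale $y$, the pointwise bounds via $W^p$, Freedman on the bounded part, then integration over $y=c_2xu$), and you correctly identify its genuine obstruction: with $y\asymp x$ the term $2\exp\pr{-x^2/\pr{c\pr{y^2+yx}}}$ is independent of $x$, so its integral in $u$ is a fixed constant that cannot be dominated by the two power-law integrals as $x\to\infty$, and closing exactly that gap is what the proof in \cite{MR4046858} is needed for.
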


The next Proposition gives a control of the maximal function involved 
in the strong law of large numbers. A control on the $r$-th moment for 
$r<p$ was obtained in the proof of Theorem~4.1 in \cite{MR668549}. 
The control on the weak-$\mathbb L^p$-moment was explicitely 
established in \cite{MR3322323}, p.~324, under a stationarity 
assumption, but the proof work for martingale with identically distributed 
increments.

\begin{Proposition}\label{prop:fct_max_LGN_martingales}
Let $\pr{d_j}_{j\geq 1}$ be an identically distributed
 martingale differences sequence 
  with respect to the 
filtration $\pr{\Fca_j}_{j\geq 0}$. Then for all $1<p<2$, 
\begin{equation}
\norm{\sup_{n\geq 1}\frac 1{n^{1/p}}  \abs{\sum_{j=1}^nd_j}}_{p,\infty}\leq c_p 
\norm{d_1}_p.
\end{equation}
\end{Proposition}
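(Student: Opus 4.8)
The plan is to reduce the weak-$\mathbb{L}^p$ norm to a weak-$\mathbb{L}^p$ tail bound for the martingale $S_n:=\sum_{j=1}^n d_j$ and then run a dyadic truncation argument in the spirit of the proof of Proposition~\ref{prop:MZSSLNiid}. Writing $M:=\sup_{n\geq 1}n^{-1/p}\abs{S_n}$, I would first use the equivalence $\norm{M}_{p,\infty}\leq C_p\norm{M}_{p,w}$ recorded after \eqref{eq:norm_lp_faible_supremum}, so that it suffices to bound $\norm{M}_{p,w}$, i.e. to establish $x^p\PP\ens{M>x}\leq C_p\E{\abs{d_1}^p}$ for every $x>0$. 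As $S_n$, $M$ and $\norm{d_1}_p$ are positively homogeneous in the increments, replacing $\pr{d_j}_j$ by $\pr{d_j/x}_j$ reduces this to the single threshold $x=1$, so it remains to prove $\PP\ens{M>1}\leq C_p\E{\abs{d_1}^p}$. Splitting at dyadic scales and using $n^{-1/p}\leq 2^{-N/p}$ for $2^N\leq n<2^{N+1}$ gives
\[\PP\ens{M>1}\leq\sum_{N\geq 0}\PP\ens{\max_{1\leq n\leq 2^{N+1}}\abs{S_n}>2^{N/p}}.\]

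For each fixed $N$ I would truncate at the level $a_N:=2^{N/p}$, setting $d'_j:=d_j\mathbf 1\ens{\abs{d_j}\leq a_N}-\E{d_j\mathbf 1\ens{\abs{d_j}\leq a_N}\mid\Fca_{j-1}}$ and $d''_j:=d_j-d'_j$. Since $\E{d_j\mid\Fca_{j-1}}=0$, both $\pr{d'_j}_j$ and $\pr{d''_j}_j$ are martingale differences for $\pr{\Fca_j}_j$, and $d''_j=d_j\mathbf 1\ens{\abs{d_j}>a_N}-\E{d_j\mathbf 1\ens{\abs{d_j}>a_N}\mid\Fca_{j-1}}$. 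Writing $S'_n,S''_n$ for the associated partial sums, the event in the sum forces $\max_n\abs{S'_n}>a_N/2$ or $\max_n\abs{S''_n}>a_N/2$. For the bounded part, Doob's maximal inequality applied to the submartingale $(S'_n)^2$, orthogonality of martingale increments, and the identical distribution of the $d_j$ yield
\[\PP\ens{\max_{n\leq 2^{N+1}}\abs{S'_n}>a_N/2}\leq\frac{4}{a_N^2}\sum_{j=1}^{2^{N+1}}\E{(d'_j)^2}\leq\frac{2^{N+3}}{a_N^2}\E{d_1^2\mathbf 1\ens{\abs{d_1}\leq a_N}}.\]
For the unbounded part, $\abs{S''_n}$ is a non-negative submartingale, so Doob's $\mathbb{L}^1$ maximal inequality and the triangle inequality give $\PP\ens{\max_{n\leq 2^{N+1}}\abs{S''_n}>a_N/2}\leq\pr{2/a_N}\sum_{j=1}^{2^{N+1}}\E{\abs{d''_j}}\leq\pr{2^{N+3}/a_N}\E{\abs{d_1}\mathbf 1\ens{\abs{d_1}>a_N}}$.

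It then remains to sum these two estimates over $N$, which I expect to be the technical heart of the proof. With $a_N=2^{N/p}$ one has $2^{N+1}/a_N^2=2\cdot 2^{N(1-2/p)}$ and $2^{N+1}/a_N=2\cdot 2^{N(1-1/p)}$; comparing these series with the corresponding integrals in the variable $t=2^{N/p}$ and using Fubini's theorem turns the bounded contribution into a constant multiple of $\E{\abs{d_1}^2\int_{\abs{d_1}}^{+\infty}t^{p-3}\,\mathrm dt}=\pr{2-p}^{-1}\E{\abs{d_1}^p}$ and the unbounded contribution into a constant multiple of $\E{\abs{d_1}\int_0^{\abs{d_1}}t^{p-2}\,\mathrm dt}=\pr{p-1}^{-1}\E{\abs{d_1}^p}$. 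This is precisely where the assumption $1<p<2$ enters in an essential way: the inequality $p<2$ ensures convergence at infinity of the integral controlling the truncated (bounded) part, while $p>1$ ensures convergence at the origin of the integral controlling the large-increment (unbounded) part. Collecting both bounds yields $\PP\ens{M>1}\leq C_p\E{\abs{d_1}^p}$, and undoing the reductions of the first paragraph gives $\norm{M}_{p,\infty}\leq c_p\norm{d_1}_p$, as claimed.
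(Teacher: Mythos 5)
Your proof is correct, but note that the paper does not actually prove Proposition~\ref{prop:fct_max_LGN_martingales}: the statement is presented as known, with the weak-$\mathbb L^p$ bound attributed to \cite{MR3322323} (p.~324, where it is established under a stationarity assumption, together with the remark that the argument works for martingale differences with identically distributed increments) and a related $r$-th moment bound, $r<p$, to \cite{MR668549}. Your argument therefore replaces a citation by a self-contained proof, and it does so using precisely the scheme the paper itself employs for Proposition~\ref{prop:MZSSLNiid}: dyadic blocking of the supremum, truncation of the increments at the block scale $a_N=2^{N/p}$ with conditional recentering so that both pieces remain martingale differences, Doob's $\mathbb L^2$ maximal inequality plus orthogonality of increments for the truncated part, Doob's weak $\pr{1,1}$ inequality for the large part, and a Tonelli/series--integral comparison in which $p<2$ gives $\sum_{N}2^{N\pr{1-2/p}}\mathbf 1\ens{2^{N/p}\geq \abs{d_1}}\leq c_p\max\ens{\abs{d_1}^{p-2},1}$ (the case $\abs{d_1}<1$ being absorbed via $d_1^2\leq\abs{d_1}^p$) and $p>1$ gives $\sum_{N}2^{N\pr{1-1/p}}\mathbf 1\ens{2^{N/p}<\abs{d_1}}\leq c_p\abs{d_1}^{p-1}$. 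The individual steps all check out: both truncated pieces are martingale differences because $\E{d_j\mid\Fca_{j-1}}=0$; the bound $\E{\pr{d'_j}^2}\leq\E{d_1^2\mathbf 1\ens{\abs{d_1}\leq a_N}}$ follows since conditional centering decreases the $\mathbb L^2$ norm, and this is exactly (and only) where the identical distribution hypothesis enters, along with the analogous $\mathbb L^1$ bound for the large part; and the homogeneity reduction to the threshold $x=1$ together with the equivalence of $\norm{\cdot}_{p,w}$ and $\norm{\cdot}_{p,\infty}$ is legitimate. What each approach buys: yours makes the paper self-contained and makes transparent that only identical distribution of the increments, not stationarity, is needed --- a point the paper merely asserts when adapting \cite{MR3322323}; the paper's citation is shorter and leans on a reference whose result carries further refinements (a compact LIL) that a bare tail-summation argument does not provide.
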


\begin{Proposition}\label{prop:fct_max_LGN_fct_de_ell_iid}
Let $\ell\geq 1$ and $\pr{X_j}_{j\geq 1}$ be a sequence such that 
$X_j=f\pr{\pr{\eps_u}_{u=j-\ell}^{j+\ell}  }$, where $\pr{\eps_u}_{u\in\Z}$ 
is i.i.d. and $X_1$ is centered. Then for all $1<p<2$, 
\begin{equation}
\norm{\sup_{n\geq 1}\frac 1{n^{1/p}}  \abs{\sum_{j=1}^nX_j}}_{p,\infty}\leq c_p 
\ell^{1-1/p}\norm{X_1}_p.
\end{equation}
\end{Proposition}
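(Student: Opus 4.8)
The plan is to exploit that $\pr{X_j}_{j\geq 1}$ is a $2\ell$-dependent sequence: since $X_j$ is a function of $\eps_{j-\ell},\dots,\eps_{j+\ell}$, the variables $X_i$ and $X_j$ are independent as soon as $\abs{i-j}\geq 2\ell+1$. Consequently, for each residue $r\in\ens{1,\dots,2\ell+1}$, the subsequence $\pr{X_{r+\pr{2\ell+1}k}}_{k\geq 0}$ is built from pairwise disjoint blocks of $\pr{\eps_u}_{u\in\Z}$, hence is i.i.d.; by stationarity each of its terms is distributed as $X_1$ and is therefore centered. First I would split
\begin{equation*}
\sum_{j=1}^n X_j=\sum_{r=1}^{2\ell+1}S_r\pr{n},\qquad
S_r\pr{n}:=\sum_{k\,:\,1\leq r+\pr{2\ell+1}k\leq n}X_{r+\pr{2\ell+1}k},
\end{equation*}
and use that $\norm{\cdot}_{p,\infty}$ is a genuine norm (unlike $\norm{\cdot}_{p,w}$) to obtain
\begin{equation*}
\norm{\sup_{n\geq 1}\frac{1}{n^{1/p}}\abs{\sum_{j=1}^n X_j}}_{p,\infty}
\leq \sum_{r=1}^{2\ell+1}\norm{\sup_{n\geq 1}\frac{1}{n^{1/p}}\abs{S_r\pr{n}}}_{p,\infty}.
\end{equation*}

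Next I would reduce the supremum over $n$ to a supremum over the number of summands. Writing $T_m^{(r)}:=\sum_{k=0}^{m-1}X_{r+\pr{2\ell+1}k}$, the function $n\mapsto S_r\pr{n}$ is constant equal to $T_m^{(r)}$ on the range $n\in[\,r+\pr{2\ell+1}\pr{m-1},\,r+\pr{2\ell+1}m-1\,]$, so $n^{-1/p}$ is maximized at the left endpoint $n_m:=r+\pr{2\ell+1}\pr{m-1}$, giving $\sup_{n\geq 1}n^{-1/p}\abs{S_r\pr{n}}=\sup_{m\geq 1}n_m^{-1/p}\abs{T_m^{(r)}}$. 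For $m\geq 2$ one has $n_m\geq \pr{2\ell+1}\pr{m-1}\geq \tfrac12\pr{2\ell+1}m$, so these terms are bounded by $2^{1/p}\pr{2\ell+1}^{-1/p}\sup_{m\geq 1}m^{-1/p}\abs{T_m^{(r)}}$, while the single term $m=1$ equals $r^{-1/p}\abs{X_r}$ and must be carried separately. Since $\pr{X_{r+\pr{2\ell+1}k}}_{k\geq 0}$ is an identically distributed martingale difference sequence with first term distributed as $X_1$, Proposition~\ref{prop:fct_max_LGN_martingales} yields $\norm{\sup_{m\geq 1}m^{-1/p}\abs{T_m^{(r)}}}_{p,\infty}\leq c_p\norm{X_1}_p$; combined with $\norm{X_r}_{p,\infty}\leq C_p\norm{X_r}_{p,w}\leq C_p'\norm{X_1}_p$ this gives
\begin{equation*}
\norm{\sup_{n\geq 1}\frac{1}{n^{1/p}}\abs{S_r\pr{n}}}_{p,\infty}
\leq \pr{\frac{C_p'}{r^{1/p}}+\frac{2^{1/p}c_p}{\pr{2\ell+1}^{1/p}}}\norm{X_1}_p.
\end{equation*}

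Finally I would sum over $r$. Using $\sum_{r=1}^{2\ell+1}r^{-1/p}\leq \tfrac{p}{p-1}\pr{2\ell+1}^{1-1/p}$ (legitimate since $1/p<1$) together with $\pr{2\ell+1}\cdot\pr{2\ell+1}^{-1/p}=\pr{2\ell+1}^{1-1/p}$, the two families of contributions combine into a constant multiple of $\pr{2\ell+1}^{1-1/p}\norm{X_1}_p$, and absorbing $\pr{2\ell+1}^{1-1/p}\leq 3^{1-1/p}\ell^{1-1/p}$ for $\ell\geq 1$ delivers the claimed inequality. The main obstacle I anticipate is the bookkeeping at the left boundary: the term $m=1$ (equivalently, the small values of $n$, comparable to $\ell$) does not enjoy the gain $\pr{2\ell+1}^{-1/p}$ coming from the sparsification and must be estimated on its own; it is precisely the summation $\sum_r r^{-1/p}$ of these boundary contributions that reproduces the correct exponent $\ell^{1-1/p}$ rather than a larger power of $\ell$.
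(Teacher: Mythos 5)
Your proof is correct and follows essentially the same route as the paper: split the sum into i.i.d.\ subsequences along arithmetic progressions of step comparable to $\ell$, use the triangle inequality for $\norm{\cdot}_{p,\infty}$, and apply Proposition~\ref{prop:fct_max_LGN_martingales} to each subsequence (the paper uses step $4\ell+2$ rather than your $2\ell+1$, which is immaterial). Your handling of the boundary contributions (the $m=1$ terms, i.e.\ $n$ comparable to $\ell$, whose summation $\sum_r r^{-1/p}$ still yields the exponent $\ell^{1-1/p}$) is in fact more careful than the paper's one-line argument, which glosses over this bookkeeping.
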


\begin{proof}
We first notice that 
\begin{equation}
\sup_{n\geq 1}\frac 1{n^{1/p}}  \abs{\sum_{j=1}^nX_j}
\leq \sum_{a\in [4\ell+2]}\sup_{n\geq 1}\frac 1{n^{1/p}}  
\abs{\sum_{k=1}^{\ent{\frac{n}{4\ell+2}}  }X_{\pr{4\ell+2}k+a }   }
\end{equation}
and we apply Proposition~\ref{prop:fct_max_LGN_martingales} to the 
sequences $\pr{X_{\pr{4\ell+2}k+a }}_{k\geq 1}$ for all fixed $a\in [4\ell+2]$.
\end{proof}

The next Proposition give a control of the weak-$\mathbb L^p$-norm of 
the maximum function involved in the bounded law of the iterated logarithms 
for partial sum of stationary sequences. When the involved 
sequence is i.i.d. and centered, this reduces to Théorème~1 in \cite{MR0501237}. 
This can be extended to the context of the functional of $2\ell+1$ i.i.d. 
random variables by the same method as in the proof of 
Proposition~\ref{prop:fct_max_LLI_fct_de_ell_iid}.

\begin{Proposition}\label{prop:fct_max_LLI_fct_de_ell_iid}
Let $\ell\geq 1$ and $\pr{X_j}_{j\geq 1}$ be a sequence such that 
$X_j=f\pr{\pr{\eps_u}_{u=j-\ell}^{j+\ell}  }$, where $\pr{\eps_u}_{u\in\Z}$ 
is i.i.d. and $X_1$ is centered. Then for all $1<p<2$, 
\begin{equation}
\norm{\sup_{n\geq 1}\frac 1{\sqrt{n\LL{n}}}  \abs{\sum_{j=1}^nX_j}}_{p,\infty}\leq c_p 
\ell^{1/2}\norm{X_1}_2.
\end{equation}
\end{Proposition}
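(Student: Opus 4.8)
The plan is to reduce to the i.i.d.\ centered case by splitting the partial sum into the $4\ell+2$ arithmetic progressions of common difference $4\ell+2$, exactly as in the proof of Proposition~\ref{prop:fct_max_LGN_fct_de_ell_iid}. First I would write
\[
\sup_{n\geq 1}\frac{1}{\sqrt{n\LL{n}}}\abs{\sum_{j=1}^n X_j}\leq \sum_{a\in[4\ell+2]}\sup_{n\geq 1}\frac{1}{\sqrt{n\LL{n}}}\abs{\sum_{k=1}^{\ent{\frac{n}{4\ell+2}}} X_{\pr{4\ell+2}k+a}}.
\]
The key observation is that along a fixed progression the index windows are disjoint: $X_{\pr{4\ell+2}k+a}$ depends only on $\pr{\eps_u}_{u=\pr{4\ell+2}k+a-\ell}^{\pr{4\ell+2}k+a+\ell}$, and two consecutive windows are separated by $\pr{4\ell+2}-\pr{2\ell+1}=2\ell+1\geq 1$ indices. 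Hence $\pr{X_{\pr{4\ell+2}k+a}}_{k\geq 1}$ is an independent, identically distributed (by stationarity) and centered sequence, each term distributed as $X_1$; this is precisely what the spacing $4\ell+2$ (rather than $2\ell+1$) buys.

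Next I would transfer the normalization from $n$ to the number of summands $m:=\ent{n/\pr{4\ell+2}}$. Since $n\geq \pr{4\ell+2}m$ and $x\mapsto\LL{x}$ is nondecreasing (so $\LL{n}\geq\LL{m}$), one has $n\,\LL{n}\geq \pr{4\ell+2}\,m\,\LL{m}$, whence
\[
\sup_{n\geq 1}\frac{1}{\sqrt{n\LL{n}}}\abs{\sum_{k=1}^{\ent{\frac{n}{4\ell+2}}}X_{\pr{4\ell+2}k+a}}\leq \frac{1}{\sqrt{4\ell+2}}\,\sup_{m\geq 1}\frac{1}{\sqrt{m\LL{m}}}\abs{\sum_{k=1}^m X_{\pr{4\ell+2}k+a}}.
\]
To each progression I would then apply the i.i.d.\ centered bounded law of the iterated logarithm in weak-$\el^p$ form, namely Th\'eor\`eme~1 in \cite{MR0501237} (converted from the weak-$\el^p$-seminorm to $\norm{\cdot}_{p,\infty}$ at the cost of a constant $C_p$ via the equivalence recorded after Corollary~\ref{cor:MZSSLNiid}), giving
\[
\norm{\sup_{m\geq 1}\frac{1}{\sqrt{m\LL{m}}}\abs{\sum_{k=1}^m X_{\pr{4\ell+2}k+a}}}_{p,\infty}\leq c_p\norm{X_1}_2
\]
for every $a$, with $c_p$ independent of $a$ and $\ell$.

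Finally I would take the weak-$\el^p$-norm of the first display and invoke the triangle inequality, which is legitimate precisely because $\norm{\cdot}_{p,\infty}$ is a genuine norm (the reason it was introduced in place of $\norm{\cdot}_{p,w}$). Combining the three displays yields
\[
\norm{\sup_{n\geq 1}\frac{1}{\sqrt{n\LL{n}}}\abs{\sum_{j=1}^n X_j}}_{p,\infty}\leq \sum_{a\in[4\ell+2]}\frac{c_p}{\sqrt{4\ell+2}}\norm{X_1}_2=\sqrt{4\ell+2}\;c_p\norm{X_1}_2\leq c_p'\,\ell^{1/2}\norm{X_1}_2,
\]
using $4\ell+2\leq 6\ell$ for $\ell\geq1$, which is the claimed bound.

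The only substantive point is the window-disjointness verification that turns each progression into an i.i.d.\ sequence; everything else is bookkeeping. The mild care needed lies in the passage from $\sqrt{n\LL{n}}$ to $\sqrt{m\LL{m}}$, where the monotonicity of $\LL{\cdot}$ must be used in the correct direction (and the finitely many small values of $m$, where $\LL{m}$ saturates to $1$, checked separately). It is exactly this step that produces the factor $\sqrt{4\ell+2}\asymp\ell^{1/2}$ and thereby the exponent $1/2$ on $\ell$ here, as opposed to the exponent $1-1/p$ obtained from the same argument with the law-of-large-numbers normalization $n^{1/p}$ in Proposition~\ref{prop:fct_max_LGN_fct_de_ell_iid}.
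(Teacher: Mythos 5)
Your proof is correct and is precisely the argument the paper intends: the paper only remarks that the proposition follows ``by the same method as'' the law-of-large-numbers analogue (Proposition~\ref{prop:fct_max_LGN_fct_de_ell_iid}), namely splitting into the $4\ell+2$ arithmetic progressions, whose disjoint windows make each subsequence i.i.d., and applying the i.i.d.\ result of \cite{MR0501237} to each. You have in addition filled in the bookkeeping the paper leaves implicit, in particular the transfer from $\sqrt{n\LL{n}}$ to $\sqrt{m\LL{m}}$ that produces the factor $\ell^{1/2}$.
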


\begin{Lemma}[Proposition~2 p.~1693 in \cite{MR3222815}]\label{lem:propriete_esperance_cond_2_tribus}
Let $Y$ be a real-valued random variable and let $\mathcal F$ and $\mathcal G$ be 
two sub-$\sigma$-algebra such that $\mathcal G$ is independent of the 
$\sigma$-algebra generated by $Y$ and $\mathcal F$. Then 
\begin{equation}
\E{Y\mid \mathcal F\vee \mathcal G}=\E{Y\mid \mathcal F}.
\end{equation}
\end{Lemma}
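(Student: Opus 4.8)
The plan is to verify that $\E{Y\mid\mathcal F}$ satisfies the two defining properties of $\E{Y\mid\mathcal F\vee\mathcal G}$: measurability with respect to $\mathcal F\vee\mathcal G$, and the integral-matching identity against every set of the larger $\sigma$-algebra. The measurability is immediate, since $\E{Y\mid\mathcal F}$ is $\mathcal F$-measurable and $\mathcal F\subseteq\mathcal F\vee\mathcal G$. The substance lies in showing that for every $A\in\mathcal F\vee\mathcal G$,
\begin{equation}
\E{Y\mathbf 1_A}=\E{\E{Y\mid\mathcal F}\mathbf 1_A}.
\end{equation}

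First I would reduce to a generating $\pi$-system. The sets of the form $F\cap G$ with $F\in\mathcal F$ and $G\in\mathcal G$ form a $\pi$-system generating $\mathcal F\vee\mathcal G$, so it suffices to establish the displayed identity for $A=F\cap G$. For such an $A$, the random variable $Y\mathbf 1_F$ is measurable with respect to the $\sigma$-algebra generated by $Y$ and $\mathcal F$, while $\mathbf 1_G$ is $\mathcal G$-measurable; by the independence hypothesis these two are independent, whence $\E{Y\mathbf 1_F\mathbf 1_G}=\E{Y\mathbf 1_F}\PP\pr{G}$. Likewise $\E{Y\mid\mathcal F}\mathbf 1_F$ is $\mathcal F$-measurable and independent of $\mathbf 1_G$ (independence of $\mathcal G$ from $\mathcal F$ being contained in the hypothesis), so $\E{\E{Y\mid\mathcal F}\mathbf 1_F\mathbf 1_G}=\E{\E{Y\mid\mathcal F}\mathbf 1_F}\PP\pr{G}$. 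Since $F\in\mathcal F$, the defining property of the conditional expectation gives $\E{\E{Y\mid\mathcal F}\mathbf 1_F}=\E{Y\mathbf 1_F}$, so both quantities equal $\E{Y\mathbf 1_F}\PP\pr{G}$ and the identity holds on the $\pi$-system.

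To pass from the $\pi$-system to all of $\mathcal F\vee\mathcal G$ I would invoke Dynkin's lemma: the collection of sets $A$ for which the identity holds is a $\lambda$-system, being closed under complementation (subtracting from the case $A=\Omega$) and under increasing countable unions (by dominated convergence). Containing the generating $\pi$-system, it therefore contains $\mathcal F\vee\mathcal G$, which is the claim.

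The main point to watch is integrability: the $\lambda$-system step relies on $Y\in\mathbb L^1$, so that the relevant expectations are finite and the convergence theorems apply. If one prefers not to assume integrability at the outset, the standard remedy is to prove the identity first for $Y\geq 0$ by monotone convergence and then decompose a general $Y$ into positive and negative parts; for every application made in this paper the integrable case already suffices.
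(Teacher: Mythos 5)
Your proof is correct. There is, in fact, nothing in the paper to compare it against: the lemma is stated with the attribution ``Proposition~2 p.~1693 in \cite{MR3222815}'' and no proof is given, so your argument serves as a self-contained substitute for that citation. It is the standard argument, and every step checks out: the rectangles $F\cap G$ with $F\in\mathcal F$, $G\in\mathcal G$ form a $\pi$-system generating $\mathcal F\vee\mathcal G$; on such a set, independence of $\mathcal G$ from $\sigma\pr{Y}\vee\mathcal F$ yields both factorizations
\begin{equation*}
\E{Y\mathbf 1_F\mathbf 1_G}=\E{Y\mathbf 1_F}\PP\pr{G},
\qquad
\E{\E{Y\mid\mathcal F}\mathbf 1_F\mathbf 1_G}=\E{\E{Y\mid\mathcal F}\mathbf 1_F}\PP\pr{G},
\end{equation*}
the defining property of $\E{Y\mid\mathcal F}$ identifies the two right-hand sides, the Dynkin $\pi$--$\lambda$ theorem extends the identity from the $\pi$-system to all of $\mathcal F\vee\mathcal G$, and uniqueness of conditional expectation concludes. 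Your closing caveat is also well placed: the lemma as stated assumes only that $Y$ is a real-valued random variable, whereas both your $\lambda$-system step (the subtraction for complements and the dominated convergence for increasing unions) and the very definition of the conditional expectations require $Y\in\mathbb L^1$, with the non-integrable non-negative case recovered by truncation and monotone convergence as you indicate. This restriction is harmless for the paper: in each place the lemma is invoked (for instance in the proof of Proposition~\ref{prop:MZSSLNiid}, where it is applied to $\abs{d_i}^p$ under the standing assumption that $\E{\abs{h\pr{\eps_0,\eps_1}}^p}$ is finite, and in the martingale-difference verifications involving the $H^{\pr{\ell}}_{i,j}$), integrability of the conditioned variable is guaranteed by the hypotheses in force.
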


Finally, the next lemma is well-known.

  \begin{Lemma}\label{lem:propriete_esperance_cond_fonction_d_iid}
  Let $Y$ and $Z$ be two independent random variables with values in $\R^d$. 
 Let $f\colon \R^d\times \R^d\to \R$ be a measurable function. Then 
 \begin{equation}
 \E{f\pr{Y,Z}\mid \sigma\pr{Z} }=g\pr{Z},
\end{equation}   
where $g\colon \R^d\to \R$ is defined by $g\pr{z}=\E{f\pr{Y,z}}$.
  \end{Lemma}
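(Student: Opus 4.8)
The plan is to reduce the statement to an application of the Fubini--Tonelli theorem, exploiting that the independence of $Y$ and $Z$ makes the law of the pair $\pr{Y,Z}$ equal to the product of the marginals. Write $\mu$ for the distribution of $Y$ and $\nu$ for that of $Z$ on $\R^d$, so that the law of $\pr{Y,Z}$ is $\mu\otimes\nu$ on $\R^d\times\R^d$. First I would treat the case of a non-negative kernel $f$: by Tonelli's theorem the function $g\colon z\mapsto \int_{\R^d}f\pr{y,z}\,\mathrm d\mu\pr{y}=\E{f\pr{Y,z}}$ is Borel measurable, so that $g\pr{Z}$ is $\sigma\pr{Z}$-measurable, which is the first requirement for $g\pr{Z}$ to be a version of the conditional expectation.

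Next I would verify the characterizing identity of conditional expectation. It suffices to check that $\E{f\pr{Y,Z}\varphi\pr{Z}}=\E{g\pr{Z}\varphi\pr{Z}}$ when $\varphi$ ranges over indicators $\mathbf 1_D$ of Borel sets $D\subset\R^d$, since the events $\ens{Z\in D}$ generate $\sigma\pr{Z}$; I would carry out the computation for an arbitrary bounded Borel $\varphi$, as this is no harder. Using the product structure of the joint law and Fubini's theorem,
\begin{align*}
\E{f\pr{Y,Z}\varphi\pr{Z}}&=\int_{\R^d\times\R^d}f\pr{y,z}\varphi\pr{z}\,\mathrm d\mu\pr{y}\,\mathrm d\nu\pr{z}\\
&=\int_{\R^d}\pr{\int_{\R^d}f\pr{y,z}\,\mathrm d\mu\pr{y}}\varphi\pr{z}\,\mathrm d\nu\pr{z}\\
&=\int_{\R^d}g\pr{z}\varphi\pr{z}\,\mathrm d\nu\pr{z}=\E{g\pr{Z}\varphi\pr{Z}},
\end{align*}
which is precisely the defining property of $\E{f\pr{Y,Z}\mid\sigma\pr{Z}}$.

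To pass from non-negative $f$ to a general measurable $f$ for which $f\pr{Y,Z}$ is integrable, I would split $f=f^+-f^-$ into its positive and negative parts and apply the previous case to each, the integrability of $f\pr{Y,Z}$ ensuring that $g=g^+-g^-$ is finite $\nu$-almost everywhere and that the interchange of integrals above is legitimate. In the uses made of this lemma in the paper the relevant instances are already non-negative (for example $f=\abs{h}^p$ in the proof of Proposition~\ref{prop:MZSSLNiid}), so the Tonelli version alone suffices there.

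There is no genuine obstacle here, as this is the classical \emph{freezing} lemma. The only point requiring care is the Borel measurability of $g$, which is exactly the measurability assertion contained in Tonelli's theorem, together with the standard facts that the events $\ens{Z\in D}$ generate $\sigma\pr{Z}$ and that testing against such indicators characterizes the conditional expectation; both are classical and need only be invoked rather than reproved.
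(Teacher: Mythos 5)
Your proof is correct. The paper itself offers no argument for this lemma --- it is stated in the appendix with the remark that it is ``well-known'' --- so your Fubini--Tonelli argument simply supplies the standard proof that the author leaves implicit: independence gives the product form $\mu\otimes\nu$ of the joint law, Tonelli yields Borel measurability of $g$, testing against indicators $\mathbf 1\ens{Z\in D}$ (which exhaust $\sigma\pr{Z}$) verifies the defining property, and the split $f=f^+-f^-$ handles the integrable case. Your added observation that the paper's actual uses (e.g.\ $f=\abs{h}^p$ in the proof of Proposition~\ref{prop:MZSSLNiid}) only need the non-negative Tonelli version is a nice touch that matches how the lemma is invoked.
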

\end{appendices} 
\def\polhk\#1{\setbox0=\hbox{\#1}{{\o}oalign{\hidewidth
  \lower1.5ex\hbox{`}\hidewidth\crcr\unhbox0}}}\def\cprime{$'$}
  \def\polhk#1{\setbox0=\hbox{#1}{\ooalign{\hidewidth
  \lower1.5ex\hbox{`}\hidewidth\crcr\unhbox0}}} \def\cprime{$'$}
\providecommand{\bysame}{\leavevmode\hbox to3em{\hrulefill}\thinspace}
\providecommand{\MR}{\relax\ifhmode\unskip\space\fi MR }
\providecommand{\MRhref}[2]{%
  \href{http://www.ams.org/mathscinet-getitem?mr=#1}{#2}
}
\providecommand{\href}[2]{#2}

\end{document}